\newtheorem{lemat}{Lemma}
\newtheorem{tw}{Theorem}
\newtheorem{ass}{Assumption}
\newtheorem{defi}{Definition}
\newtheorem*{uwaga}{Remark}
\def\vc#1{\boldsymbol{#1}}
\def\ten#1{\boldsymbol{#1}}
\newcommand{\theep}{\theta^{\varepsilon}}
\newcommand{\theet}{\theta^{\eta}}
\newcommand{\dxdt}{\,{\rm{d}}x\,{\rm{d}}t}
\newcommand{\dx}{\,{\rm{d}}x}
\newcommand{\ds}{\,{\rm{d}}s}
\newcommand{\dt}{\,{\rm{d}}t}
\newcommand{\dz}{\,{\rm{d}}z}
\newcommand{\dtau}{\,{\rm{d}}\tau}
\newcommand{\dnu}{\,{\rm{d}}\nu}
\newcommand{\braket}[1]{(\!(#1)\!)}
\newcommand{\tenepp}{\ten{\varepsilon}^{\bf p}}
\newcommand{\tenepuk}{\ten{\varepsilon}(\vc{u}_{k})}
\numberwithin{equation}{section}
\def\div{\rm{div\,}}
\title{Thermo-visco-elasticity for models with growth conditions in Orlicz spaces}
\author[F. Z. Klawe]{Filip Z. Klawe}
\address{Institute of Applied Mathematics, University of Warsaw, ul. Banacha 2, 02-097 Warsaw, Poland}
\email{fzklawe@mimuw.edu.pl}
\begin{document}

\begin{abstract}

We study a quasi-static evolution of thermo-visco-elastic model. We act with external forces on non-homogeneous material body, which is a subject of our research. Such action may cause deformation of this body and may change its temperature. Mechanical part of the model contains two kinds of deformation: elastic and visco-elastic. Mechanical deformation is coupled with the temperature and they may influence each other. Since constitutive function on evolution of visco-elastic deformation depends on temperature, the visco-elastic properties of material also depend on temperature. We consider the thermodynamically complete model related to hardening rule with growth condition in generalized Orlicz spaces. We provide the proof of existence of solutions for such class of models.

\end{abstract}

\keywords{visco-elasticity, thermal effects, Galerkin approximation, monotonicity method, renormalizations, generalized Orlicz space}
\subjclass[2000]{74C10, 35Q74, 74F05}
\maketitle

\section{Introduction}

The objective of this paper is to show the existence of solution to special class of thermo-visco-elastic models. We consider reaction of material body treated by external forces and heat flux through the boundary. In the case of ideal elastic deformations, the body should return to its initial state after termination of external forces activity. However, if deformations are not elastic, i.e. there is a loss of potential energy, we deal with special kind of inelastic deformations. Potential energy lost during the process may be transformed into thermal energy. We focus on the visco-elastic type of deformations, which for instance may be observed in polymers. Both deformations are coupled in physical phenomena and they may be observed at the same time. Consequently, these two types of deformations appear in the models considered in this paper. Elastic deformation is reversible, whereas visco-elastic one irreversible.

The thermo-visco-elastic system of equations, as a consequence of balance of momentum and balance of energy, cf. \cite{GreenNaghdi, LandauLifshitz}, see  also \cite{GKSG}, captures displacement, temperature and visco-elastic strain. Since these two principles do not take into account the material properties of considered body, we may complement it by adding constitutive relations which complete missing information. The standard technique in the solid body deformation is to work with two constitutive relations. First one describes the dependency between stress and strains, i.e. this is an equation for the Cauchy stress tensor. Second one is a constitutive equation which is characterized by the evolution of visco-elastic strain tensor.

We assume that the body $\Omega \subset \mathbb{R}^3$ is an open bounded set with a $C^2$ boundary. Then quasi-static evolution problem is formulated by the following system of equations
\begin{equation}
\left\{
\begin{array}{rclr}
- \div \ten{T} &=& \vc{f} & \mbox{in } \Omega\times(0,T),
\\
\ten{T} &=& \ten{D}(\ten{\varepsilon}(\vc{u}) - \ten{\varepsilon}^{\bf p} ) & \mbox{in } \Omega\times(0,T),
\\
\ten{\varepsilon}^{\bf p}_t &=& \ten{G}(\theta,\ten{T}^d) & \mbox{in } \Omega\times(0,T),
\\
\theta_t - \Delta\theta &=& \ten{T}^d:\ten{G}(\theta,\ten{T}^d) & \mbox{in } \Omega\times(0,T).
\end{array}
\right.
\label{full_system_2}
\end{equation}
By the solution of this system we understand finding the displacement of material $\vc{u}:\Omega\times\mathbb{R}_+\rightarrow \mathbb{R}^3$, the temperature of material $\theta:\Omega\times\mathbb{R}_+\rightarrow\mathbb{R}_+$ and the visco-elastic strain tensor $\ten{\varepsilon}^{\bf p}:\Omega\times\mathbb{R}_+\rightarrow \mathcal{S}^3_d$. We denote by $\mathcal{S}^3$ the set of symmetric $3 \times 3$-matrices with real entries and by $\mathcal{S}^3_d$ a subset of $\mathcal{S}^3$ which contains traceless matrices. The function $\ten{T}:\Omega\times\mathbb{R}_+\rightarrow \mathcal{S}^3$ stays for the Cauchy stress tensor. By $\ten{I}$ we mean the identity matrix from $\mathcal{S}^3$, thus $\ten{T}^d$ is a deviatoric (traceless) part of the tensor $\ten{T}$, i.e. $\ten{T}^d=\ten{T}-\frac{1}{3}tr(\ten{T})\ten{I}$. Additionally, we denote by $\ten{\varepsilon}(\vc{u})$ the deformation tensor associated to $\vc{u}$, i.e. $\ten{\varepsilon}(\vc{u})=\frac{1}{2}(\nabla\vc{u} + \nabla^T\vc{u})$. 

The motivation for current paper is to extend results presented in \cite{GKSG_NH}, where we proved the existence of solution to Norton-Hoff model, i.e. the model with growth condition on visco-elastic strain tensor in Lebesgue spaces. Model with growth condition in the generalized Orlicz spaces is a natural extension of Norton-Hoff model as a next step to make an approximation of Prandtl-Reuss model. Use of generalized Orlicz spaces takes into consideration more rapid growth than in the case of growth condition in Lebesgue spaces. Furthermore, choice of generalized Orlicz space allow us to consider non-homogeneous materials. Since the $N$-function depends on the spatial variable $x$, different regions of $\Omega$  may have different growth condition. Consideration of non-homogeneous materials implies that the operator $\ten{D}$ may also depend on the spatial variable $x$. In previous papers, see \cite{GKSG,GKSG_NH} we considered only homogeneous materials. 

Studying mechanical problems in Orlicz spaces is not an isolated issue. In the case of visco-elastic deformation, the problem involving Orlicz spaces was considered in \cite{ChGw2007}, but only in the case of $N$-function independent on spatial variable $x$. In the case of $N$-function which depends on the spatial variable $x$ some accurate assumptions must be done. There are two possible ways to make it. Firstly, we may assume the regularity with respect to $x$, e.g. log-H\"{o}lder continuity in \cite{2Agnieszka, 3Agnieszka}, secondly upper and lower growth condition of an $N$-function with respect to the last variable can be considered, e.g. see \cite{Wroblewska20104136,GSG_TMNA,GSW,G1}. There are no results for thermo-visco-elastic problems without any upper and lower growth condition on $N$-function with respect to the last variable.

System of equations \eqref{full_system_2} is a mathematical simplification of more general model. We consider the quasi-static evolution with small displacement. It means that we omit acceleration term in momentum equation as a consequence of long-term character of external forces. Small displacement allows us to use the Hooke's law in the definition of Cauchy stress tensor \eqref{full_system_2}$_{(2)}$. Moreover, the material does not change its volume with the temperature, i.e. there is no thermal expansion of body, thus the Cauchy stress tensor does not depend on temperature.

System \eqref{full_system_2} may be completed by formulating the initial 
\begin{equation}
\left\{
\begin{array}{rcl}
\theta(x,0)&=&\theta_0(x), 
\\
\ten{\varepsilon}^{\bf p}(x,0)&=&\ten{\varepsilon}^{\bf p}_0(x),
\end{array}
\right.
\label{init_0}
\end{equation}
in $\Omega$ and boundary conditions
\begin{equation}
\left\{
\begin{array}{rcl}
\vc{u}&=&\vc{g}, \\
\frac{\partial \theta}{\partial \vc{n}}&=&g_{\theta},
\end{array}
\right.
\label{boun_0}
\end{equation}
on $\partial\Omega\times(0,T)$. We control the shape of $\Omega$ and the heat flux through the boundary.

To discuss two other equations and to formulate the statement of this paper, we need to use some definitions which are mentioned below for better readability of the paper. Let us begin with presenting the notion of generalized Orlicz spaces. For more general concept of Orlicz space we refer the reader to \cite{adams,musielak,raoren,odAnety}. We start with defining $N$-function.

\begin{defi}\label{df:Nfunction}
Let $\Omega$ be a bounded open domain in $\mathbb{R}^3$. A function $M:\Omega\times\mathcal{S}^3 \to \mathbb{R}_+$ is said to be $N$-function if it satisfies the following conditions:
\begin{itemize}
\item[1)] $M$ is a Carath\'{e}odory function (measurable with respect to $x$ and continuous with respect to $\ten{\xi}$) such that $M(x,\ten{\xi})=0$ if and only if $\ten{\xi}=\ten{0}$;
\item[2)] $M(x,\ten{\xi})=M(x,-\ten{\xi})$ a.e. in $\Omega$;
\item[3)] $M(x,\ten{\xi})$ is a convex function with respect to $\ten{\xi}$;
\item[4)] $\lim_{|\ten{\xi}|\to 0}M(x,\ten{\xi})/|\xi| =0$ for all $x\in\Omega$;
\item[5)] $\lim_{|\ten{\xi}|\to \infty}M(x,\ten{\xi})/|\xi| =\infty$ for all $x\in\Omega$;
\end{itemize}
\end{defi}

\begin{defi}
Function $M^*$ which is complementary to function $M$ is defined by 
\begin{equation}
M^*(x,\ten{\eta}) = \sup_{\ten{\eta}\in\mathcal{S}^3}(\ten{\xi}:\ten{\eta} - M(x,\ten{\xi})),
\end{equation}
for $\ten{\eta}\in\mathcal{S}^3, x\in\Omega$.
\end{defi}

\begin{uwaga}
A complementary function $M^*$ to $N$-function $M$ is also an $N$-function.
\end{uwaga}

Let us denote by $Q=\Omega \times (0,T)$. The generalized Orlicz 
class $\mathcal{L}_M(Q)$ is the set of all measurable function $\ten{\xi}:Q\to \mathcal{S}^3$ such that 
\begin{equation}
\int_Q M(x,\ten{\xi}(x,t)) \dxdt <\infty.
\end{equation}
The generalized Orlicz 
space $L_M(Q)$ can be defined as the smallest linear space containing $\mathcal{L}_M(Q)$. By $E_M(Q)$ we denote the closure of the set of bounded functions in $L_M$-norm. The generalized Orlicz space $L_M(Q)$ is a Banach space with respect to the Orlicz norm
\begin{equation}
\|\ten{\xi}\|_{O,M} = \sup \left\{\int_Q \ten{\xi}:\ten{\eta} \dxdt : \ \ten{\eta}\in L_{M^*}(Q), \ \int_Q M^*(x,\ten{\eta}) \dxdt \leq 1\right\},
\end{equation}
or equivalently with respect to Luxemburg norm
\begin{equation}
\|\ten{\xi}\|_{L,M} = \inf \left\{\lambda >0: \ \int_Q M\left(x,\frac{\ten{\xi}}{\lambda}\right) \dxdt \leq 1 \right\}.
\end{equation}

\begin{defi}
We say that an $N$-function $M$ satisfies $\Delta_2$-condition if for almost all $x\in\Omega$ and for all $\ten{\xi}\in\mathcal{S}^3$, there exists a constant $c$ and nonnegative integrable function $h:\Omega\to \mathbb{R}$ such that
\begin{equation}
M(x,2\ten{\xi})\leq c M(x,\ten{\xi}) + h(x).
\label{eq:def_delta2}
\end{equation}
\end{defi}

\begin{uwaga}
For every $M$ the following concluding holds
\begin{equation}
E_M(Q) \subseteq \mathcal{L}_M(Q) \subseteq L_M(Q).
\end{equation}
In particular, if $M$ satisfies the $\Delta_2$-condition, $E_M(Q) = L_M(Q)$. If $\Delta_2$-condition fails, we lose numerous properties of the space $L_M(Q)$ like separability, reflexivity and many others, cf. \cite{adams,musielak} and particular \cite{Minak} for generalized Orlicz spaces. 
\end{uwaga}

The space $L_{M^*}(Q)$ is the dual space of $E_M(Q)$. The functional
\begin{equation}
\rho (\ten{\xi}) = \int_Q M(x,\ten{\xi})\dxdt
\end{equation}
is a modular.

\begin{defi}
We say that a sequence $\{\ten{\xi}_i\}_{i=1}^{\infty}$ converges modularly to $\ten{\xi}$ in $L_M(Q)$ if there exists $\lambda>0$ such that
\begin{equation}
\int_Q M\left(x,\frac{\ten{\xi}_i-\ten{\xi}}{\lambda}\right)\dxdt \to 0,
\end{equation}
when $i$ is going to $\infty$. We use the notation $\ten{\xi}_i \xrightarrow{M}\ten{\xi}$ for modular convergence in $L_M(Q)$.
\end{defi}

In Appendix \ref{B} we present several lemmas related to Orlicz spaces. We use these lemmas to prove the existence of thermo-visco-elasticity model solution.

After these few definitions we may discuss the constitutive relations used to complement the system \eqref{full_system_2}. Relation between the Cauchy stress tensor and the strain tensor is defined by operator $\ten{D}:\mathcal{S}^3\rightarrow\mathcal{S}^3$, which is linear, positively definite and bounded. Moreover, $\ten{D}$ is a four-index matrix, i.e. $\ten{D}=\ten{D}(x)=\left\{d_{i,j,k,l}(x)\right\}_{i,j,k,l=1}^3$ and the following equalities  hold
\begin{equation}
d_{i,j,k,l}(x) = d_{j,i,k,l}(x),
\quad
d_{i,j,k,l}(x) = d_{i,j,l,k}(x)
\quad
\mbox{and}
\quad
d_{i,j,k,l}(x) = d_{k,l,i,j}(x),
\end{equation}
for all $i,j,k,l=1,2,3$ and for a.a. $x\in\Omega$. Additionally, function $d_{i,j,k,l}$ belongs to $W^{1,p}(\Omega)$ for each $i,j,k,l=1,2,3$  and for some $p>3$.

Second constitutive relation is an evolutionary equation for visco-elastic strain tensor. Function $\ten{G}:\mathbb{R}_+\times \mathcal{S}^3_d \to \mathcal{S}^3_d$ is a function of temperature and deviatoric part of Cauchy stress tensor. We discuss more precisely the concept of such a choice in \cite{GKSG_NH}. The properties of considered material imply the choice of specific function. Various different models were considered, e.g. Bodner-Partom model \cite{Bartczak, MANA:MANA5, MMA:MMA802}, Mr\'{o}z model \cite{GKSG, brokate, Homberg200455}, Norton-Hoff model \cite{GKSG_NH,CheAl}, Prandtl-Reuss model with linear kinematic hardening \cite{ChR}.

\begin{ass}
The function $\ten{G}(\theta,\ten{T}^d)$ is continuous with respect to $\theta$ and $\ten{T}^d$ and satisfies the following conditions:
\begin{itemize}
\item[a)] $(\ten{G}(\theta,\ten{T}^d_1)-\ten{G}(\theta,\ten{T}^d_2)):(\ten{T}^d_1-\ten{T}^d_2) \geq 0$, for all $\ten{T}_1^d,\ten{T}_2^d \in \mathcal{S}^3_d$ and $\theta\in\mathbb{R}_+$;
\item[b)] $\ten{G}(\theta,\ten{T}^d):\ten{T}^d \geq c\left(  M(x,\ten{T}^d) + M^*(x,\ten{G}(\theta,\ten{T}^d))\right) $, where $\ten{T}^d\in\mathcal{S}^3_d$, $\theta\in\mathbb{R}_+$ and $c$ is a positive constant independent of temperature $\theta$.
\end{itemize}
\label{ass_G}
Moreover, we assume that the generalized Orlicz spaces fulfill:
\begin{itemize}
\item[1)] the following inequality holds
\begin{equation}
\int_Q M^*(x,\ten{A}(x,t))\dxdt \leq \int_Q |\ten{A}|^2 \dxdt \qquad \forall \ten{A} \in L_{M^*}(Q);
\end{equation}
\item[2)] $M^*$ satisfies the $\Delta_2$-condition.
\end{itemize}
\end{ass}

Dealing with such assumption on function $\ten{G}(\cdot,\cdot)$ implies the displacement space. 

\begin{defi}{Bounded deformation space, see \cite{Stokes}}

Let us define the space of bounded deformation $BD_M(Q,\mathbb{R}^3)$
\begin{equation}
BD_{M}(Q,\mathbb{R}^3) = \left\{ \vc{u} \in L^1(\Omega,\mathbb{R}^3): \quad \ten{\varepsilon}(\vc{u}) \in L_{M}(\Omega,\mathcal{S}^3) \right\}.
\end{equation}
The space $BD_{M}(Q,\mathbb{R}^3)$ is a Banach space with a norm
\begin{equation}
\|\vc{u}\|_{BD_{M}(Q)} = \|\vc{u}\|_{L^1(Q)} + \|\ten{\varepsilon}(\vc{u})\|_{O,M}.
\end{equation}
According to [26,Theorem 1.1] there exists a unique continuous operator $\gamma_0$ from $BD_{M}(Q)$ onto $L^1((0,T)\times\partial\Omega)$ such that the generalized Green formula 
\begin{equation}
2\int_Q \phi \ten{\varepsilon}_{i,j}(\vc{u}) \dxdt
= 
-\int_Q (u_i\frac{\partial\phi}{\partial x_i} +u_j\frac{\partial\phi}{\partial x_j} )
+\int_0^T\int_{\partial\Omega} \phi (\gamma_0(u_i) n_j + \gamma_0(u_j) n_i) \mbox{d}\mathcal{H}^{n-1}\dt
\end{equation}
holds for every $\phi\in C^1(\overline{Q})$ and where $\vc{n}=(n_1,n_2,n_3)^T$ is an unit outward normal vector on $\partial\Omega$ and $\mathcal{H}^{n-1}$ is the $(n-1)$-Hausdorff measure. 

\label{df:sol_u}
\end{defi}


Furthermore, we understand $\vc{v}\in BD_{M^*}(Q,\mathbb{R}^3) + L^{\infty}(0,T,W^{2,p}(\Omega,\mathbb{R}^3))$ in the following way: There exists a decomposition $\vc{v}= \vc{v}_1+ \vc{v}_2$, where $\vc{v}_1\in BD_{M^*}(Q,\mathbb{R}^3)$ and $\vc{v}_2\in L^{\infty}(0,T,W^{2,p}(\Omega,\mathbb{R}^3))$.


In the contrast to \cite{GKSG_NH} or \cite{Homberg200455} we use another approach to heat equation. By Assumption \ref{ass_G} we know that the right hand side function $\ten{G}(\theta,\ten{T}^d):\ten{T}^d$ is only integrable. Following Boccardo and Gallou\"{e}t, cf. \cite{Boccardo}, we proved in \cite{GKSG_NH} that the solution to the heat equation belongs to $L^q(0,T,W^{1,q}(\Omega))$ for all $q\in (1,\frac{5}{4})$. A weak point of this approach is lack of uniqueness. Hence, we change the approach and following Blanchard and Murat we prove the existence of a renormalised solution. The concept of renormalised solutions to parabolic equation with Dirichlet boundary condition was presented in \cite{Blanchard, BlanchardMurat}. In the Appendix \ref{A} we prove existence of a renormalised solution in case of Neumann boundary condition.

During the modelling of physical phenomena we should not forget about their physical properties. Losses of energy or admission of negative temperature causes that mathematical result has no physical interpretation. In the case of solid mechanics, the model should fulfill the principle of thermodynamics. Considered model is thermodynamically complete, i.e. that the principle of thermodynamics is fulfilled. In \cite{GKSG_NH,GKSG} we discussed conservation of energy, positivity of temperature and existence of entropy, which has a positive rate of production. Considering the quasi-static evolution causes that the energy of system consists of internal (thermal) and potential energy. Lack of acceleration term in balance of momentum implies that the kinetic energy of $\Omega$ fails.






\begin{defi}{Weak-renormalised solution of the system \eqref{full_system_2} }

The triple of functions $\vc{u}\in  BD_{M^*}(Q,\mathbb{R}^3) + L^{\infty}(0,T,W^{2,p}(\Omega,\mathbb{R}^3))$, $\ten{T}\in L^2(0,T,L^2(\Omega,\mathcal{S}^3))$ and $\theta\in  C([0,T],L^1(\Omega))$ such that for every $K\in\mathbb{N}$, $\mathcal{T}_K(\theta)\in L^2(0,T,W^{1,2}(\Omega))$ is a weak-renormalised solution of the system \eqref{full_system_2} when
\begin{equation}
\begin{split}
\int_0^T\int_{\Omega}\ten{T}:\nabla\vc{\varphi} \dxdt 
&= \int_0^T\int_{\Omega}\vc{f}\cdot \vc{\varphi} \dxdt ,
\end{split}
\end{equation}
where 
\begin{equation}
\ten{T}=\ten{D}(\ten{\varepsilon}(\vc{u}) - \ten{\varepsilon}^{\bf p}),
\end{equation}
holds for every test function $\vc{\varphi}\in C^{\infty}([0,T],C^{\infty}_c(\Omega,\mathbb{R}^3))$ and 
\begin{equation}
\begin{split}
-\int_Q S(\theta-\tilde{\theta})\frac{\partial \phi}{\partial t}\dxdt -&\int_{\Omega} S(\theta_0-\tilde{\theta}_0)\phi(x,0)\dx
+ \int_Q S'(\theta-\tilde{\theta})\nabla(\theta-\tilde{\theta}) \cdot\nabla\phi \dxdt 
\\
+ &\int_Q S''(\theta-\tilde{\theta})|\nabla(\theta-\tilde{\theta})|^2\phi \dxdt
= \int_Q  \ten{G}(\theta,\ten{T}^d):\ten{T}^d S'(\theta - \tilde{\theta})\phi \dxdt
\end{split}
\end{equation}
holds for every test function $\phi\in C^{\infty}_c([-\infty,T),C^{\infty}(\Omega))$, for every function $S\in C^{\infty}(\mathbb{R})$ such that $S'\in C_0^{\infty}(\mathbb{R})$ and for $\tilde{\theta}$ which is a solution of problem
\begin{equation}
\left\{
\begin{array}{rcll}
\tilde{\theta}_t -\Delta \tilde{\theta} &=& 0 & \mbox{in } \Omega\times (0,T), \\
\frac{\partial\tilde{\theta}}{\partial\vc{n}} &=& g_{\theta} & \mbox{on } \partial\Omega\times (0,T), \\
\tilde{\theta}(x,0) &=& \tilde{\theta}_0 & \mbox{in } \Omega.
\end{array}
\right.
\label{war_brz_t_0}
\end{equation}
Furthermore, the visco-elastic strain tensor can be recovered from the equation on its evolution, i.e.
\begin{equation}
\ten{\varepsilon}^{\bf p}(x,t) = \ten{\varepsilon}^{\bf p}_0(x) + \int_0^t \ten{G}(\theta(x,\tau),\ten{T}^d(x,\tau)) \dtau,
\end{equation}
for a.e. $x\in\Omega$ and $t\in [0,T)$. Moreover $\ten{\varepsilon}^{\bf p},\ten{\varepsilon}^{\bf p}_t \in L_{M^*}(Q)$.
\label{df:solution}
\end{defi}

\begin{tw}
Let initial conditions satisfy $\theta_0 \in L^1(\Omega)$, $\ten{\varepsilon}^{\bf p}_0\in L_{M^*}(\Omega,\mathcal{S}^3_d)$, boundary conditions satisfy $g_{\theta}\in L^2(0,T,L^2(\partial\Omega))$, for $p>3$ the function $\vc{g}\in L^{\infty}(0,T, W^{2,p}(\Omega,\mathbb{R}^3))$ and volume force $\vc{f}\in L^{\infty}(0,T,L^p(\Omega,\mathbb{R}^3))$, also
function $\ten{G}(\cdot,\cdot)$ satisfies the same conditions as in Assumption \ref{ass_G}. Then there exists a weak solution to system \eqref{full_system_2}. 

\label{th:main}
\end{tw}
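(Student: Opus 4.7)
The plan is to build a weak-renormalised solution (in the sense of Definition~\ref{df:solution}) via a multi-level approximation scheme, combining a regularisation of the constitutive operator $\ten{G}$, a Galerkin discretisation of the mechanical part, and the renormalised framework of Appendix~\ref{A} for the heat equation. First I would homogenise the data by writing $\vc{u}=\vc{g}+\vc{w}$ with $\vc{w}|_{\partial\Omega}=0$ and $\theta=\tilde{\theta}+\vartheta$ where $\tilde{\theta}$ is given by \eqref{war_brz_t_0}. Since $L_M(Q)$ is not reflexive without a $\Delta_2$-condition on $M$, direct Galerkin in $L_M$ is unavailable, so I would replace $\ten{G}$ by a sequence of approximations $\ten{G}_n$ of polynomial (hence $\Delta_2$) growth that preserve the monotonicity in $\ten{T}^d$ and the coercivity of Assumption~\ref{ass_G}(b) with constants independent of $n$. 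A Schauder fixed-point argument on the temperature variable in $C([0,T],L^1(\Omega))$ then yields solvability of the regularised system at each level~$n$, with $\ten{\varepsilon}^{\bf p}_n$ reconstructed from the evolution equation.

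Uniform a priori estimates come from testing the momentum balance with $\vc{w}_n$ and using $\ten{\varepsilon}^{\bf p}_{n,t}=\ten{G}_n(\theta_n,\ten{T}^d_n)$ together with Assumption~\ref{ass_G}(b), producing
\begin{equation*}
\int_Q\bigl(M(x,\ten{T}^d_n)+M^*(x,\ten{G}_n(\theta_n,\ten{T}^d_n))\bigr)\dxdt\leq C
\end{equation*}
independent of $n$. Consequently $\ten{T}^d_n$ is bounded in $L_M(Q)$, $\ten{\varepsilon}^{\bf p}_{n,t}$ in $L_{M^*}(Q)$, and $\vc{w}_n$ in $BD_M$ by an Orlicz Korn inequality. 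For the heat equation, whose right-hand side is only $L^1$, I would combine the Boccardo-Gallou\"et truncation with the renormalised machinery of Appendix~\ref{A}: this yields, for every $K$, uniform bounds on $\mathcal{T}_K(\theta_n-\tilde{\theta}_n)$ in $L^2(0,T,W^{1,2}(\Omega))$ and compactness of $\theta_n$ in $C([0,T],L^1(\Omega))$, giving strong convergence $\theta_n\to\theta$. By weak-$*$ compactness in $L_M=(L_{M^*})^*$ (which relies on $\Delta_2$ for $M^*$) and the analogous argument in $L_{M^*}$, I extract limits $\ten{T}^d$ and $\ten{\chi}$.

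The main obstacle, and the real content of the theorem, is the identification $\ten{\chi}=\ten{G}(\theta,\ten{T}^d)$ in this non-reflexive Orlicz setting. I would use a Minty-type argument adapted to modular convergence: by Assumption~\ref{ass_G}(a),
\begin{equation*}
\int_Q\bigl(\ten{G}_n(\theta_n,\ten{T}^d_n)-\ten{G}_n(\theta_n,\ten{W})\bigr):\bigl(\ten{T}^d_n-\ten{W}\bigr)\dxdt\geq 0
\end{equation*}
for every smooth test matrix $\ten{W}$. The cross-terms pass to the limit thanks to the strong convergence $\theta_n\to\theta$ and the continuity of $\ten{G}$ in temperature, but the crucial energy identity
\begin{equation*}
\lim_{n\to\infty}\int_Q\ten{T}^d_n:\ten{G}_n(\theta_n,\ten{T}^d_n)\dxdt=\int_Q\ten{T}^d:\ten{\chi}\dxdt
\end{equation*}
must be produced from the momentum equation tested with $\vc{u}_n-\vc{g}$. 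This is the hard part: the pairing between $L_M$ and $L_{M^*}$ is degenerate because $L_{M^*}$ is only the dual of $E_M$, so one has to approximate $\ten{\varepsilon}(\vc{u})-\ten{\varepsilon}^{\bf p}$ by bounded functions via the modular-density lemmas of Appendix~\ref{B}, exploiting the structural bound $M^*(x,\cdot)\leq|\cdot|^2$ to rule out concentration. Once this energy convergence is established, taking $\ten{W}=\ten{T}^d+\lambda\ten{\Phi}$ and letting $\lambda\to 0^{\pm}$ yields $\ten{\chi}=\ten{G}(\theta,\ten{T}^d)$; the renormalised heat identity is then recovered with a test nonlinearity $S$ as in Definition~\ref{df:solution}, completing the proof.
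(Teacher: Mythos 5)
Your overall route (regularise $\ten{G}$ to polynomial growth, solve by a Schauder fixed point, then pass to the limit) differs from the paper, which keeps the original $\ten{G}$, discretises by a two-level Galerkin scheme (with the heat source truncated by $\mathcal{T}_k$) and gets the approximate solutions from Carath\'eodory's ODE theorem; a different route would be acceptable, but as written two of your key steps fail. First, the energy identity cannot be produced by ``testing the momentum equation with $\vc{u}_n-\vc{g}$'': the quantity you need, $\lim_n\int_Q\ten{T}^d_n:\ten{G}_n = \int_Q\ten{T}^d:\ten{\chi}$, is the dissipation, and it comes from the relation $\frac{d}{dt}\mathcal{E}(\ten{\varepsilon}(\vc{u}),\tenepp)=-\int_\Omega\ten{G}:\ten{T}^d\dx$, i.e.\ from testing the momentum balance with the \emph{time derivative} of the displacement and the flow rule with the stress; testing with $\vc{u}_n-\vc{g}$ only yields an elastic relation at fixed time and never generates this term. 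At the limit level $\ten{T}\in L^2$ and $\tenepp_t\in L_{M^*}$ only, so the required chain rule must be justified by mollification in time combined with the modular-convergence lemmas of Appendix \ref{B} and the cut-off $\psi_{\mu,\tau}$ — this is precisely the content of Lemma \ref{lm:8} and its analogue in Section \ref{lim_k}, and it is absent from your argument. Second, your closing Minty step is internally inconsistent: the monotonicity inequality is derived for bounded test matrices $\ten{W}$, yet you then insert $\ten{W}=\ten{T}^d+\lambda\ten{\Phi}$, which lies merely in $L_M(Q)$; in the non-reflexive setting you can neither extend the inequality to such $\ten{W}$ by density nor conclude that $\ten{G}(\theta,\ten{T}^d+\lambda\ten{\Phi})$ converges suitably as $\lambda\to0$. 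The paper circumvents this with the test function $\ten{W}^d=-\tilde{\ten{T}}^d\vc{1}_{Q^s\setminus Q_i}+\ten{T}^d_k\vc{1}_{Q_i}+h\ten{U}^d\vc{1}_{Q_j}$ built on the sets $Q_i=\{|\ten{T}^d_k|\le i\}$, where the perturbation stays bounded and the weak-$*$ $L_{M^*}$ convergence as $h\to0^+$ can actually be proved.

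Two further gaps. The approximations $\ten{G}_n$ of polynomial growth satisfying Assumption \ref{ass_G}(b) with the same $M$, $M^*$ and a constant independent of $n$ are asserted, not constructed; since $M$ carries no upper growth restriction, a polynomial-growth $\ten{G}_n$ cannot in general dominate $c\left(M(x,\ten{T}^d)+M^*(x,\ten{G}_n)\right)$ uniformly, and avoiding exactly this obstruction is why the paper works with a finite-dimensional approximation of the original $\ten{G}$. Moreover, for the renormalised heat equation you need the dissipation $\ten{G}(\theta_n,\ten{T}^d_n):\ten{T}^d_n$ to converge \emph{weakly in} $L^1(Q)$ to $\ten{G}(\theta,\ten{T}^d):\ten{T}^d$; your proposal only delivers convergence of its space-time integral. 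Upgrading from the integral identity to weak $L^1$ convergence of the integrand is a genuine separate step, carried out in the paper via Young measures and Chacon's biting lemma (Lemmas \ref{lm:10}, \ref{lm:ineq}, \ref{lm:charakteryzacja}); without it the limit passage in the renormalised formulation of Appendix \ref{A}, which presupposes weak $L^1$ convergence of the sources, is not justified. Note also a latent circularity in your scheme: the Minty argument uses a.e.\ convergence of $\theta_n$, while the temperature compactness is to come from the renormalised machinery, which itself needs control of the converging source; the paper breaks this loop at the first level by the truncated source and the $L^2$ estimates of Lemma \ref{lm:7}, which give $\theta_{k,l}\to\theta_k$ a.e.\ before any identification of the nonlinear limit is attempted.
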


The idea of proof is similar as in \cite{GKSG_NH}. We use Galerkin approximations. Usage of growth condition in Orlicz spaces instead of growth condition in Lebesgue spaces implies utilization of different analytic tools, e.g. Minty-Browder trick for Orlicz spaces, which appear here to be non-reflexive, to identify the weak limit of nonlinear term and biting limit to show the convergences in $L^1(Q)$ of right hand side in heat equation. Moreover, Young measures tools are used and some important lemmas for the Young measure are presented in Appendix \ref{C}.

\section{Proof of Theorem \ref{th:main}}

The proof of Theorem \ref{th:main} consist of few steps. Each steps is presented in separate section. 

\subsection{Transformation to a homogeneous boundary-value-problem}

First step of the proof is to transform the system into homogeneous boundary-value problem. Construction of solution is more clear in this case. Moreover, we also cut off the right hand side function in the elastic problems. Thereby, instead of volume force and boundary values we receive the same influence of exterior by using the shifts of solutions. 
It allows us to focus on the important issues instead of the calculation difficulties.

Let us consider two independent systems of equations with given initial conditions and boundary data. The boundary conditions are the same as in \eqref{boun_0}. 
\begin{equation}
\left\{
\begin{array}{rcll}
-\div \tilde{\ten{T}} &=& \vc{f} & \mbox{in } \Omega\times (0,T), \\
\tilde{\ten{T}} &=& \ten{D}\ten{\varepsilon}(\tilde{\vc{u}}) & \mbox{in } \Omega\times (0,T), \\
\tilde{\vc{u}} &=& \vc{g} & \mbox{on } \partial\Omega\times (0,T), 
\end{array}
\right.
\label{war_brz_u}
\end{equation}
and
\begin{equation}
\left\{
\begin{array}{rcll}
\tilde{\theta}_t -\Delta \tilde{\theta} &=& 0 & \mbox{in } \Omega\times (0,T), \\
\frac{\partial\tilde{\theta}}{\partial\vc{n}} &=& g_{\theta} & \mbox{on } \partial\Omega\times (0,T), \\
\tilde{\theta}(x,0) &=& \tilde{\theta}_0 & \mbox{in } \Omega.
\end{array}
\right.
\label{war_brz_t}
\end{equation}

\begin{lemat}
For $p>3$, let $\tilde{\theta}_0 \in L^2(\Omega)$, $\vc{g} \in L^{\infty}(0,T,W^{2,p}(\Omega,\mathbb{R}^3))$, $g_{\theta} \in L^2(0,T,L^2(\partial\Omega))$ and $\vc{f}\in L^{\infty}(0,T,L^p(\Omega,\mathbb{R}^3))$. Then there exists a solution to systems \eqref{war_brz_u} and \eqref{war_brz_t}. Additionally, the following estimates hold:
\begin{equation}
\begin{split}
\|\tilde{\vc{u}}\|_{L^{\infty}(0,T,W^{2,p}(\Omega))} 
& \leq 
C_1 \left(\|\vc{g}\|_{L^{\infty}(0,T,W^{2,p}(\Omega))}+ 
\|\vc{f}\|_{L^{\infty}(0,T,L^{p}(\Omega))} \right),
\\
\|\tilde{\theta}\|_{L^{\infty}(0,T,L^1(\Omega))}  + \|\tilde{\theta}\|_{L^2(0,T,W^{1,2}(\Omega))} 
& \leq 
C_2 \left(\|g_{\theta}\|_{L^2(0,T,L^2(\partial\Omega))}+\|\tilde{\theta}_0\|_{L^2(\Omega)} \right).
\nonumber
\end{split}
\end{equation}
Moreover, the following estimate holds for the Cauchy stress tensor
\begin{equation}
\|\tilde{\ten{T}}\|_{L^{\infty}(Q)} \leq 
C_3 \left(\|\vc{g}\|_{L^{\infty}(0,T,W^{2,p}(\Omega))}+ 
\|\vc{f}\|_{L^{\infty}(0,T,L^{,p}(\Omega))} \right).
\label{eq:24}
\end{equation}
\label{wyrzucenie_war_brzeg}
\end{lemat}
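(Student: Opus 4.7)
The plan is to treat the two systems separately, since they are fully decoupled at this stage: \eqref{war_brz_u} is a linear elliptic boundary-value problem parameterised by $t$, while \eqref{war_brz_t} is a standard parabolic equation with Neumann data.

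For \eqref{war_brz_u}, I would first reduce to homogeneous boundary data by writing $\tilde{\vc{u}}=\vc{w}+\vc{g}$, so that $\vc{w}$ solves
\[
-\div\bigl(\ten{D}\ten{\varepsilon}(\vc{w})\bigr)=\vc{f}+\div\bigl(\ten{D}\ten{\varepsilon}(\vc{g})\bigr)\quad\text{in }\Omega,\qquad \vc{w}=0\text{ on }\partial\Omega,
\]
for almost every $t\in(0,T)$. The coefficients $d_{ijkl}$ lie in $W^{1,p}(\Omega)$ with $p>3$, which by Morrey embeds into $C^{0,\alpha}(\overline{\Omega})$, and the symmetries together with positive definiteness make the system strongly elliptic in the Legendre--Hadamard sense. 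Since $\partial\Omega$ is $C^2$, one invokes the Agmon--Douglis--Nirenberg $L^p$-regularity theory for elliptic systems to obtain the pointwise-in-$t$ bound
\[
\|\tilde{\vc{u}}(\cdot,t)\|_{W^{2,p}(\Omega)}\le C\bigl(\|\vc{f}(\cdot,t)\|_{L^p(\Omega)}+\|\vc{g}(\cdot,t)\|_{W^{2,p}(\Omega)}\bigr),
\]
with $C$ depending only on $\Omega$, $p$ and $\|d_{ijkl}\|_{W^{1,p}}$. Taking the essential supremum in $t$ gives the first estimate.

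For the stress tensor, because $p>3$ we have the Sobolev embedding $W^{1,p}(\Omega)\hookrightarrow L^{\infty}(\Omega)$; applied to $\ten{\varepsilon}(\tilde{\vc{u}})\in L^{\infty}(0,T,W^{1,p})$ together with boundedness of $\ten{D}$, this yields $\tilde{\ten{T}}=\ten{D}\ten{\varepsilon}(\tilde{\vc{u}})\in L^{\infty}(Q)$ with the bound \eqref{eq:24}. This gives a Cauchy stress that can subsequently be absorbed into the source terms of the reduced problem.

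For \eqref{war_brz_t}, existence of $\tilde\theta\in L^{2}(0,T,W^{1,2}(\Omega))\cap C([0,T],L^2(\Omega))$ follows from Galerkin approximation in a smooth basis for $W^{1,2}(\Omega)$, or from the variational theory for coercive parabolic problems; the Neumann boundary datum enters via the weak formulation as a surface integral against $g_\theta$. The energy estimate is obtained by testing with $\tilde\theta$ itself,
\[
\tfrac{1}{2}\frac{d}{dt}\|\tilde\theta\|_{L^2(\Omega)}^2+\|\nabla\tilde\theta\|_{L^2(\Omega)}^2=\int_{\partial\Omega}g_\theta\,\gamma_0(\tilde\theta)\,\mbox{d}\mathcal{H}^{n-1},
\]
then applying the trace inequality $\|\gamma_0(\tilde\theta)\|_{L^2(\partial\Omega)}\le C\|\tilde\theta\|_{W^{1,2}(\Omega)}$, Young's inequality to absorb a small fraction of $\|\nabla\tilde\theta\|_{L^2}^2$ on the left, and Gronwall's lemma. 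This yields the $L^{\infty}(0,T,L^2(\Omega))\cap L^{2}(0,T,W^{1,2}(\Omega))$-bound in terms of $\|g_\theta\|_{L^2(0,T,L^2(\partial\Omega))}+\|\tilde\theta_0\|_{L^2(\Omega)}$, and the $L^{\infty}(0,T,L^1(\Omega))$-bound follows from $|\Omega|<\infty$ and Hölder's inequality.

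The genuinely delicate point is not the existence but matching the regularity needed later: one must be sure that the ellipticity regularity theory applies with coefficients merely in $W^{1,p}$ (not $C^1$) on a $C^2$ domain, and that the trace operator is sharp enough to close the energy estimate with the given regularity of $g_\theta$. Both are standard but must be invoked carefully; beyond this, all remaining steps are routine linear PDE arguments.
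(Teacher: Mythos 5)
Your argument is correct and follows essentially the same route as the paper: the same decomposition $\tilde{\vc{u}}=\tilde{\vc{u}}_1+\vc{g}$ reducing to a homogeneous Dirichlet elasticity problem with right-hand side $\vc{f}+\div(\ten{D}\ten{\varepsilon}(\vc{g}))\in L^{\infty}(0,T,L^p)$, elliptic $W^{2,p}$ regularity pointwise in $t$, and the Sobolev embedding for $p>3$ to get \eqref{eq:24}, while the parabolic part is handled by the standard energy estimate the paper dismisses as straightforward. The only cosmetic difference is that the paper cites Valent's Theorem 7.1 (tailored exactly to $W^{1,p}$, $p>3$, coefficients) rather than invoking Agmon--Douglis--Nirenberg theory directly, which is the very point you correctly flag as needing care.
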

Results for temperature are straightforward, hence let us discuss only existence of solution to the elastic system of equations. 
\begin{proof}
Rewriting the solution in the form $\tilde{\vc{u}} = \tilde{\vc{u}}_1+\vc{g}$, instead of looking for $\tilde{\vc{u}}$ we may search for $\tilde{\vc{u}}_1$, where $\tilde{\vc{u}}_1$ is a solution of the system
\begin{equation}
\left\{
\begin{array}{rcll}
-{\div} \ten{D}\ten{\varepsilon}(\tilde{\vc{u}}_1) &=& \vc{f} +{\div} \ten{D}\ten{\varepsilon}(\vc{g}) & \mbox{in } \Omega\times (0,T), \\
\tilde{\vc{u}}_1 &=& 0 & \mbox{on } \partial\Omega\times (0,T),
\end{array}
\right.
\label{war_brz_u_0}
\end{equation}
where function $\vc{f} +{\div} \ten{D}\ten{\varepsilon}(\vc{g})$  belongs to $L^{\infty}(0,T,L^p(\Omega,\mathbb{R}^3))$. By \cite[Theorem 7.1]{Valent} we know that there exists an unique solution to elasticity problem. By condition $p>3$ and by using the general Sobolev inequalities \cite[Theorem 6, p. 270]{Evans} we obtain the inequality \eqref{eq:24}. This estimate is crucial in the next steps of the proof.
\end{proof}

Instead of finding  $(\widehat{\vc u}, \widehat{\theta})-$ the solution to problem \eqref{full_system_2}-\eqref{init_0}-\eqref{boun_0}
we shall search for $(\vc{u}, \theta)$, where $\vc{u}=\widehat{\vc{u}}-\tilde{\vc{u}}$ and $\theta=\widehat{\theta}-\tilde{\theta}$ where $(\tilde{\vc{u}},\tilde{\theta})$ solves \eqref{war_brz_u} and \eqref{war_brz_t}. Furthermore, we consider the following system of equations

%
\begin{equation}
\left\{
\begin{split}
- {\div} \ten{T} & =  0 ,
\\
\ten{T} & = \ten{D}(\ten{\varepsilon}(\vc{u}) - \ten{\varepsilon}^{\bf p} ),
\\
\ten{\varepsilon}^{\bf p}_t & =  \ten{G}(\theta + \tilde{\theta}, \ten{T}^d + \tilde{\ten{T}}^d),
\\
\theta_t - \Delta \theta & =   \big(\ten{T}^d + \tilde{\ten{T}}^d\big):\ten{G}(\theta + \tilde{\theta}, \ten{T}^d + \tilde{\ten{T}}^d),
\end{split}
\right.
\label{full_system_22}
\end{equation}
with boundary and initial conditions:
\begin{equation}
\left\{	
\begin{array}{rcll}
\vc{u} &=& 0 & \mbox{on } \partial\Omega\times (0,T), \\
\frac{\partial\theta}{\partial \vc{n}} &=& 0 & \mbox{on } \partial\Omega\times (0,T), \\
\theta(\cdot,0) &=& \widehat{\theta}_0 - \tilde{\theta}_0 \equiv \theta_0 & \mbox{in } \Omega, \\
\ten{\varepsilon}^{\bf p}(\cdot,0) &=& \ten{\varepsilon}^{\bf p}_0 & \mbox{in } \Omega,
\end{array}
\right.
\label{in_bou_cond}
\end{equation}
where $\widehat{\theta}_0$ is an initial condition for whole  temperature and $\tilde{\theta}_0$ is the initial condition for the system \eqref{war_brz_t}.

\subsection{Approximate solution}
Construction of approximate solutions does not differ from the one presented in \cite{GKSG_NH}. Let us define the standard truncation operator $\mathcal{T}_k(\cdot)$ by
\begin{equation}
\mathcal{T}_k(x)=\left\{
\begin{split}
k \qquad & x> k \\
x \qquad & |x|\leq k \\
-k \qquad & x <-k,
\end{split}
\right.
\label{Tk}
\end{equation}
for $k\in{\mathbb N}$. Use of truncation is implied only by integrability of the right hand side of the heat equation and initial condition for temperature. In the proof of solutions' existence we use the truncations of solution as a test function. This truncation does not need to be a linear combination of basis functions. Thus, we use two level approximation, i.e. independent approximation parameters in the displacement and temperature. Due to this construction the limit passage in each approximation level may be done independently. As the first step we pass to the limit with temperature approximations parameter, i.e. with $l\to \infty$, and latterly we pass to the limit with displacement approximation parameter. Moreover, we construct the approximate solution for visco-elastic strain tensor. After the first limit passage the visco-elastic strain tensor is an infinite dimensional approximation. The low regularity of data implies that the second limit passage requires closer attention.

Construction of approximate solution requires usage of three different bases, i.e. bases for temperature, displacement and visco-elastic strain. 

Let $\{v_i\}_{i=1}^\infty$ be the set of  eigenfunctions of  Laplace operator with the domain $W^{1,2}_n(\Omega)=\{ v\in W^{1,2}(\Omega):\quad \frac{\partial v}{\partial\vc{n}} = 0 \}$. Let $\{\mu _i \}$ be the set of corresponding eigenvalues, let  $\{v_i\}$ be orthogonal in $W^{1,2}_n(\Omega)$ and orthonormal in $L^2(\Omega)$.

To construct the basis functions for approximation let us start from considering the space $L^2(\Omega,\mathcal{S}^3)$ with the scalar product defined by
\begin{equation}
(\ten{\xi},\ten{\eta})_{\ten{D}}:=  \int_\Omega {\ten{D}}^\frac{1}{2}\ten{\xi}\cdot {\ten{D}}^\frac{1}{2}\ten{\eta} \dx 
\quad\mbox{for }\ten{\xi},\ten{\eta}\in L^2(\Omega,\mathcal{S}^3)
\end{equation}
where ${\ten{D}}^\frac{1}{2}\circ{\ten{D}}^\frac{1}{2}=\ten{D}$. Moreover, let $\{\vc{w}_i\}_{i=1}^{\infty}$ be the set of eigenfunctions of elasto-static operator $-{\div}\ten{D}\ten{\varepsilon}(\cdot)$ with the domain $W_0^{1,2}(\Omega,\mathbb{R}^3)$ and  $\{ \lambda_i \}$ be the corresponding eigenvalues such that $\{\vc{w}_i\}$ is orthogonal in $W^{1,2}_0(\Omega,\mathbb{R}^3)$ with the inner product
\begin{equation}
( \vc{w}, \vc{v})_{W^{1,2}_0(\Omega)}=( \ten{\varepsilon}(\vc{w}), \ten{\varepsilon}(\vc{v}))_{\ten{D}}
\end{equation}
and orthonormal in $L^2(\Omega,\mathbb{R}^3)$. Moreover, $\|\cdot\|_{\ten{D}}$ is a norm of $L^2(\Omega,\mathcal{S}^3)$, i.e.
\begin{equation}
\|\ten{\varepsilon}(\vc{w})\|^2_{\ten{D}}=( \ten{\varepsilon}(\vc{w}), \ten{\varepsilon}(\vc{w}))_{\ten{D}}.
\end{equation}
Furthermore, by using the symmetry of operator $\ten{D}$ the following equality holds for basis functions $\vc{w}_i, \vc{w}_j$ 
\begin{equation}
\int_{\Omega}\ten{D}\ten{\varepsilon}(\vc{w}_i):\ten{\varepsilon}(\vc{w}_j) \dx = \lambda_i \int_{\Omega}\vc{w}_i\cdot\vc{w}_j \dx = 0,
\end{equation} 
when $i\neq j$.

The idea of constructing visco-elastic strain approximations  was presented in \cite{GKSG_NH} and we hereby refer the reader to this paper for more details. We observe that $\ten{\varepsilon}(\ten{w}_i)$ are elements of $H^s(\Omega,\mathcal{S}^3)$ by regularity of eigenfunctions, where $H^s(\Omega,\mathcal{S}^3)$ is a fractional Sobolev space with scalar product denoted by $\braket{\cdot,\cdot}_s$ and $s>\frac{3}{2}$.
Let us define the orthogonal complement in $L^2(\Omega,\mathcal{S}^3)$ 
\begin{equation}\label{Vk}
V_k:= (\mbox{span}\{\ten{\varepsilon}(\ten{w}_1),...,\ten{\varepsilon}(\ten{w}_k)\})^\bot,
\end{equation}
taken with respect to the scalar product $(\cdot,\cdot)_{\ten{D}}$. Moreover, let us define 
\begin{equation}\label{Vks}
V_k^s:=V_k\cap H^s(\Omega,\mathcal{S}^3).
\end{equation}
Using \cite[Theorem 4.11]{maleknecas}, which was also used in \cite{GKSG_NH}, there exists an orthonormal basis $\{\ten{\zeta}^k_n\}_{n=1}^{\infty}$ of $V_k$ which is also an  orthogonal basis of $V_k^s$. The basis for visco-elastic strain consists of two subsets. First subset is a set of first $k$ symmetric gradients from the basis $\{\ten{w}_i\}_{i=1}^{\infty}$. The second subset consists of first $l$ functions from $\{\ten{\zeta}^k_n\}_{n=1}^{\infty}$. Thus, for each step of approximation we use $k+l$ functions to construct visco-elastic strain.

For $k,l\in\mathbb{N}$ we define
\begin{equation}
\begin{split}
\vc{u}_{k,l} & = \sum_{n=1}^k\alpha_{k,l}^n(t) \vc{w}_n,
 \\
\theta_{k,l} & = \sum_{m=1}^l\beta_{k,l}^m(t) v_m,
 \\
\ten{\varepsilon}^{\bf p}_{k,l} & = \sum_{n=1}^k\gamma_{k,l}^n(t) \ten{\varepsilon}(\vc{w}_n) + 
\sum_{m=1}^l\delta_{k,l}^m(t) \ten{\zeta}_m^k,
\end{split}
\label{eq:postac}
\end{equation}

such that $\vc{u}_{k,l}$, $\ten{\varepsilon}^{\bf p}_{k,l}$ and $\theta_{k,l}$ solve the system of equations

\begin{equation}
\begin{array}{rll}
\int_{\Omega}  \ten{T}_{k,l} : \ten{\varepsilon}(\vc{w}_n) \dx &= 0
& n=1,...,k ,
\\[1ex]
\ten{T}_{k,l} &= \ten{D}(\ten{\varepsilon}(\vc{u}_{k,l}) - \ten{\varepsilon}^{\bf p}_{k,l} ),
\\[1ex]
\int_{\Omega}(\ten{\varepsilon}^{\bf p}_{k,l})_t : \ten{D}\ten{\varepsilon}(\vc{w}_n) \dx &= 
\int_{\Omega}\ten{G}(\tilde{\theta} + \theta_{k,l} ,  \tilde{\ten{T}}^d + \ten{T}^d_{k,l}  ) : \ten{D}\ten{\varepsilon}(\vc{w}_n) \dx 
& n=1,...,k ,
\\[1ex]
\int_{\Omega}(\ten{\varepsilon}^{\bf p}_{k,l})_t : \ten{D}\ten{\zeta}^k_m \dx &= 
\int_{\Omega}\ten{G}(\tilde{\theta} + \theta_{k,l},  \tilde{\ten{T}}^d + \ten{T}^d_{k,l}  ) : \ten{D}\ten{\zeta}^k_m \dx 
& m=1,...,l ,
\\[1ex]
\int_{\Omega}(\theta_{k,l})_t v_m\dx  + \int_{\Omega}\nabla\theta_{k,l}\cdot\nabla v_m \dx &
\\[1ex]
= \int_{\Omega} \mathcal{T}_k(  (\ten{T}_{k,l}^d + \tilde{\ten{T}}^d  ): & \ten{G}(\tilde{\theta} + \theta_{k,l} ,  \tilde{\ten{T}}^d + \ten{T}^d_{k,l} ) ) v_m \dx & m=1,...,l .
\end{array}
\label{app_system}
\end{equation}
for a.a. $t\in(0,T)$.
Moreover, the solutions fulfill initial conditions in the following form 
\begin{equation}
\left\{
\begin{array}{rclc}
\left( \theta_{k,l}(x,0), v_m\right) &=& \left( \mathcal{T}_k(\theta_0),v_m \right) & m=1,..,l, \\[1ex]
\left( \ten{\varepsilon}^{\bf p}_{k,l}(x,0), \ten{\varepsilon}(\vc{w}_n) \right)_{\ten{D}} &=& \left(\ten{\varepsilon}^{\bf p}_0, \ten{\varepsilon}(\vc{w}_n) \right)_{\ten{D}}
& n=1,..,k,
\\[1ex]
\left( \ten{\varepsilon}^{\bf p}_{k,l}(x,0), \ten{\zeta}_m^k) \right)_{\ten{D}} &=& \left(\ten{\varepsilon}^{\bf p}_0, \ten{\zeta}^k_m \right)_{\ten{D}}
& m=1,..,l,
\end{array}
\right.
\label{eq:warunki_pocz_app}
\end{equation}
where $\big(\cdot,\cdot\big)$ denotes the inner product in $L^2(\Omega)$ and $\big(\cdot,\cdot\big)_{\ten{D}}$ denotes the inner product in $L^2(\Omega,\mathcal{S}^3)$.

Let us notice that $\alpha_{k,l}^n(t) =  \gamma_{k,l}^n(t)$ by the selection of Galerkin bases and representation of approximate solution \eqref{eq:postac}. To present it more clearly let 
\begin{equation}
\vc{\xi}(t) = (\beta_{k,l}^1(t),...,\beta_{k,l}^l(t),\gamma_{k,l}^1(t),..., \gamma_{k,l}^k(t),\delta_{k,l}^1(t),...,\delta_{k,l}^l(t) )^T.
\end{equation}
Then, the system of equations \eqref{app_system} may be rewritten in the form of ODE's system
\begin{equation}
\left\{
\begin{split}
(\gamma_{k,l}^n(t))_t  &= 
\frac{1}{\lambda_n} 
\int_{\Omega}\tilde{\ten{G}}(x,t,\vc{\xi}(t)) : \ten{D}\ten{\varepsilon}(\vc{w}_n) \dx  ,
\\
(\delta_{k,l}^m(t))_t  &=
\int_{\Omega}\tilde{\ten{G}}(x,t,\vc{\xi}(t)) : \ten{D}\ten{\zeta}_m^k\dx ,
\\
(\beta_{k,l}^m(t))_t &= \int_{\Omega} \mathcal{T}_k\Big(   (-(\ten{D}\sum_{m=1}^l \delta_{k,l}^m(t) \ten{\zeta}_m^k )^d + \tilde{\ten{T}}^d  \big)
 :\tilde{\ten{G}}(x,t,\vc{\xi}(t)) \Big) v_m \dx  + \mu_m \beta_{k,l}^m(t),
\end{split}
\right.
\label{app_system20}
\end{equation}
for $n=1,...,k$ and $m=1,...,l$, where
\begin{equation}
\begin{split}
\tilde{\ten{G}}(x,t,\vc{\xi}(t))
& := \ten{G}(\tilde{\theta} + \theta_{k,l},\tilde{\ten{T}}^d + \ten{T}_{k,l}^d)
\\
&=\ten{G}\Big(\sum_{m=1}^l \beta_{k,l}^m(t) v_j(x) + \tilde{\theta},  - \ten{D} \Big(\sum_{m=1}^l \delta_{k,l}^m(t) \ten{\zeta}_m^k  \Big)^d + \tilde{\ten{T}}^d  \Big).
\end{split}
\nonumber
\end{equation}

Hence, the existence of solution to approximate system is equivalent to existence of solution to the following ODE's system 
\begin{equation}\label{47}
\begin{split}
&\frac{d\vc{\xi} }{dt}  = \vc{F}(\vc{\xi}(t),t),
\qquad
t\in [0,T),
\\
&\vc{\xi}(0) =\vc{\xi}_{0},
\end{split}
\end{equation}
where $\vc{\xi}_{0}$ is a vector of initial conditions obtained from \eqref{eq:warunki_pocz_app}. 
\begin{lemat}{(Existence of approximate solution)}

For initial condition satisfying $\ten{\varepsilon}^{\bf p}_0\in L_{M^*}(\Omega,\mathcal{S}^3_d)$ and $\theta_0\in L^1(\Omega)$ there exists a local solution to \eqref{47} which is absolutely continuous in time.
\label{istnienie_przyblizone}
\end{lemat}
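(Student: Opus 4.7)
The plan is to recast the approximate system \eqref{app_system} as a finite-dimensional initial-value problem in $\vc{\xi}(t)\in\mathbb{R}^{k+2l}$ and to invoke Carath\'{e}odory's existence theorem, which supplies an absolutely continuous local solution. First I would verify that the initial vector $\vc{\xi}_0$ is well-defined by projecting $\mathcal{T}_k(\theta_0)$ and $\ten{\varepsilon}^{\bf p}_0$ onto the appropriate finite-dimensional subspaces via the $L^2(\Omega)$ and $(\cdot,\cdot)_{\ten{D}}$ inner products; this is immediate because $\{v_m\}$, $\{\ten{\varepsilon}(\vc{w}_n)\}$, $\{\ten{\zeta}^k_m\}$ are orthogonal in these inner products and the data $\theta_0\in L^1(\Omega)$, $\ten{\varepsilon}^{\bf p}_0\in L_{M^*}(\Omega,\mathcal{S}^3_d)$ pair with them.

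Next I would verify the Carath\'{e}odory conditions on the right-hand side $\vc{F}(\vc{\xi},t)$ in \eqref{47}. \emph{Measurability in $t$} for each fixed $\vc{\xi}$ follows from continuity of $\ten{G}$ combined with the measurability in $(x,t)$ of the shifts $\tilde{\theta}$ and $\tilde{\ten{T}}^d$ provided by Lemma~\ref{wyrzucenie_war_brzeg}, together with Fubini's theorem. \emph{Continuity in $\vc{\xi}$} relies on the basis functions $v_m$, $\ten{\varepsilon}(\vc{w}_n)$, $\ten{\zeta}^k_m$ being bounded on $\bar{\Omega}$: the first two by elliptic regularity on the $C^2$ domain, the third because $V_k^s\subset H^s(\Omega)$ with $s>3/2$ and $H^s(\Omega)$ continuously embeds into $C^0(\bar{\Omega})$ in dimension three. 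Therefore $\vc{\xi}\mapsto(\theta_{k,l}(\cdot;\vc{\xi}),\ten{T}^d_{k,l}(\cdot;\vc{\xi}))$ is continuous into $L^\infty(\Omega)$ on bounded sets, and continuity of $\ten{G}$ then yields pointwise convergence of the integrands under $\vc{\xi}_n\to\vc{\xi}$.

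The \emph{locally integrable bound} on $\vc{F}$ is where most of the work sits. For the $\beta$-equation the truncation $\mathcal{T}_k$ trivializes this: the integrand is dominated by $k|v_m(x)|$ and the linear term $\mu_m\beta^m$ is harmless. For the $\gamma$- and $\delta$-equations I would argue that on a ball $|\vc{\xi}|\le R$ the second argument of $\ten{G}$ stays in a bounded subset of $\mathcal{S}^3_d$, since $\tilde{\ten{T}}^d\in L^\infty(Q)$ by \eqref{eq:24} and $\ten{T}^d_{k,l}$ is pointwise bounded once $|\vc{\xi}|\le R$, while the first argument varies in a controlled neighbourhood of $\tilde{\theta}(x,t)$; continuity of $\ten{G}$ on compact sets of its arguments then yields a measurable dominator whose slice-in-$t$ integrability is inherited from $\tilde{\theta}(\cdot,t)$, and dominated convergence closes the argument.

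I anticipate this last local bound for the $\gamma,\delta$ components to be the main obstacle, because Assumption~\ref{ass_G} gives only coercivity of $\ten{G}$ in terms of $M$ and $M^*$ and no pointwise growth control on $\ten{G}$ itself, so the dominator must be produced purely from continuity on compact argument sets together with the $L^\infty$-bound on $\tilde{\ten{T}}^d$. Once it is established, Carath\'{e}odory's existence theorem provides an absolutely continuous $\vc{\xi}$ on some maximal interval $[0,T_{k,l})$, and reassembling the Galerkin coefficients gives the approximate triple $(\vc{u}_{k,l},\theta_{k,l},\ten{\varepsilon}^{\bf p}_{k,l})$ asserted by the lemma; uniqueness is not claimed, as it would require a Lipschitz condition on $\ten{G}$ which is unavailable here.
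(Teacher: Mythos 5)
Your overall route coincides with the paper's: recast \eqref{app_system} as the ODE system \eqref{47} and invoke the Carath\'{e}odory existence theorem (the paper does exactly this, citing \cite{maleknecas,zeidlerB} without verifying the hypotheses). The strategy is fine; the problem sits precisely where you yourself locate the main difficulty. For the $\gamma$- and $\delta$-equations you propose to manufacture the locally integrable majorant ``purely from continuity of $\ten{G}$ on compact argument sets together with the $L^{\infty}$-bound on $\tilde{\ten{T}}^d$''. This step fails as stated, because the first argument of $\ten{G}$ is $\tilde{\theta}+\theta_{k,l}$ and $\tilde{\theta}$ is \emph{not} pointwise bounded: Lemma \ref{wyrzucenie_war_brzeg} only gives $\tilde{\theta}\in L^{\infty}(0,T,L^1(\Omega))\cap L^2(0,T,W^{1,2}(\Omega))$, so on a ball $|\vc{\xi}|\le R$ the arguments of $\ten{G}$ are not confined to any compact set. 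Continuity alone then yields only a bound of the form $\omega\big(|\tilde{\theta}(x,t)|+C(R)\big)$ with an uncontrolled modulus $\omega$, and its integrability in $x$ cannot be ``inherited'' from that of $\tilde{\theta}$; without a genuine majorant you cannot even assert that $\tilde{\ten{G}}(\cdot,t,\vc{\xi})\in L^1(\Omega)$, i.e.\ that $\vc{F}(\vc{\xi},t)$ is well defined, nor run dominated convergence for continuity in $\vc{\xi}$.

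The gap is repairable, but by the very mechanism you set aside: Assumption \ref{ass_G} b). From $\ten{G}(\theta,\ten{T}^d):\ten{T}^d\ge c\,M^*(x,\ten{G}(\theta,\ten{T}^d))$ one gets $M^*(x,\ten{G})\le \frac{1}{c}\,|\ten{G}|\,|\ten{T}^d|$, and the superlinearity of $M^*(x,\cdot)$ then bounds $|\ten{G}(\theta,\ten{T}^d)|$ by a quantity depending only on $|\ten{T}^d|$ (and $x$), \emph{uniformly in} $\theta$ --- exactly the device the paper uses later in the Minty--Browder step to conclude that $\ten{G}(\tilde{\theta}+\theta_{k,l},\tilde{\ten{T}}^d+\ten{W}^d)$ is bounded when its second argument is in $L^{\infty}$. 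Combining this with $\tilde{\ten{T}}^d\in L^{\infty}(Q)$ from \eqref{eq:24} and the pointwise bound $|\ten{T}^d_{k,l}|\le C(R)$ on $|\vc{\xi}|\le R$ (your bounded-basis argument is correct) produces a temperature-independent majorant for the $\gamma,\delta$ components, after which your measurability and continuity checks and Carath\'{e}odory's theorem go through; a minor extra care is that the $x$-uniformity of this bound uses the superlinearity of $M^*$ locally uniformly in $x$, which the paper also uses implicitly. The remaining parts of your write-up --- well-posedness of $\vc{\xi}_0$, the truncation trivializing the $\beta$-equation, and declining to claim uniqueness --- are consistent with the lemma, which asserts only local existence of an absolutely continuous solution.
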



The proof of Lemma \ref{istnienie_przyblizone} is a consequence of application of Carath\'{e}odory Theorem, see \cite[Theorem 3.4]{maleknecas} or \cite[Appendix $(61)$]{zeidlerB}. We obtain the existence of unique absolutely continuous solution for some time interval $[0,t^*]$.

%
%

\subsection{Boundedness of energy}

Since we consider the physical model, the total energy of system should be finite. Omission of the kinetic effect implies that the total energy consists of thermal energy and potential energy. We start with consideration devoted to potential energy. The part devoted to thermal energy estimates is similar to one presented in \cite{GKSG_NH}, hence we recall the lemmas without the proofs.

\begin{defi}
We say that $\mathcal{E}(\ten{\varepsilon}(\vc{u}),\ten{\varepsilon}^{\bf p})$
is the  potential energy if
\begin{equation}
\mathcal{E}(\ten{\varepsilon}(\vc{u}),\ten{\varepsilon}^{\bf p}): = \frac{1}{2}\int_{\Omega}\ten{D}(\ten{\varepsilon}(\vc{u}) - \ten{\varepsilon}^{\bf p}):(\ten{\varepsilon}(\vc{u}) - \ten{\varepsilon}^{\bf p} ) \dx .
\nonumber
\end{equation}
\label{energia}
\end{defi}

\begin{lemat}
There exists a constant $C$ (uniform with respect to $k$ and $l$) such that 
\begin{equation}
\begin{split}
 \mathcal{E}(\ten{\varepsilon}(\vc{u}_{k,l}) &, \ten{\varepsilon}^{\bf p}_{k,l}) (t)
+ \frac{2c - d}{2}\int_{Q} M^*(x,\ten{G}(\tilde{\theta} + \theta_{k,l},\tilde{\ten{T}}^d + \ten{T}_{k,l}^d)) \dxdt
\\
&
+ c \int_Q M(x,\tilde{\ten{T}}^d + \ten{T}_{k,l}^d ) \dxdt 
\leq
C,
\end{split}
\end{equation}
where $c$ is a constant from Assumption \ref{ass_G} and $d=\min(1,c)$. Moreover, the constant $C$ depends on solution of additional problem \eqref{war_brz_u} and potential energy at the initial time
\begin{equation}
C = \int_{Q}M(x,\frac{2}{d}\tilde{\ten{T}}^d) \dxdt
+  \mathcal{E}(\ten{\varepsilon}(\vc{u}_{k,l}) , \ten{\varepsilon}^{\bf p}_{k,l})(0).
\end{equation}
\label{pom_2}
\end{lemat}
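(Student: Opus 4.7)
The plan is to differentiate $\mathcal{E}$ along the Galerkin trajectory and to exploit the fact that the basis has been designed so that the purely elastic work drops out, leaving only the plastic dissipation driven by $\ten{G}$. Using the symmetry of $\ten{D}$ and the absolute continuity guaranteed by Lemma \ref{istnienie_przyblizone}, one obtains
\[
\frac{d}{dt}\mathcal{E}(\ten{\varepsilon}(\vc{u}_{k,l}),\ten{\varepsilon}^{\bf p}_{k,l})(t)=\int_{\Omega}\ten{T}_{k,l}:\bigl(\ten{\varepsilon}(\vc{u}_{k,l})_{t}-(\ten{\varepsilon}^{\bf p}_{k,l})_{t}\bigr)\dx,
\]
and the first summand vanishes because $\ten{\varepsilon}(\vc{u}_{k,l})_{t}$ lies in $\operatorname{span}\{\ten{\varepsilon}(\vc{w}_{n})\}_{n=1}^{k}$, against which $\ten{T}_{k,l}$ is orthogonal by the first equation of \eqref{app_system}.

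For the second summand I would use the identity $\alpha_{k,l}^{n}=\gamma_{k,l}^{n}$ observed below \eqref{eq:postac}. This forces
\[
\ten{\varepsilon}(\vc{u}_{k,l})-\ten{\varepsilon}^{\bf p}_{k,l}=-\sum_{m=1}^{l}\delta_{k,l}^{m}\ten{\zeta}_{m}^{k},\qquad \ten{T}_{k,l}=-\sum_{m=1}^{l}\delta_{k,l}^{m}\ten{D}\ten{\zeta}_{m}^{k},
\]
so that inserting this expansion and applying the fourth Galerkin equation of \eqref{app_system} (tested against $\ten{D}\ten{\zeta}_{m}^{k}$) transforms $\int_{\Omega}\ten{T}_{k,l}:(\ten{\varepsilon}^{\bf p}_{k,l})_{t}\dx$ into $\int_{\Omega}\ten{G}(\tilde{\theta}+\theta_{k,l},\tilde{\ten{T}}^{d}+\ten{T}^{d}_{k,l}):\ten{T}_{k,l}\dx$, which equals $\int_{\Omega}\ten{G}:\ten{T}^{d}_{k,l}\dx$ because $\ten{G}$ is traceless. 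The energy identity therefore reduces to $\tfrac{d\mathcal{E}}{dt}=-\int_{\Omega}\ten{G}:\ten{T}^{d}_{k,l}\dx$.

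Splitting $\ten{T}^{d}_{k,l}=(\tilde{\ten{T}}^{d}+\ten{T}^{d}_{k,l})-\tilde{\ten{T}}^{d}$, Assumption \ref{ass_G}(b) controls $\ten{G}:(\tilde{\ten{T}}^{d}+\ten{T}^{d}_{k,l})$ from below by $c\bigl(M(x,\tilde{\ten{T}}^{d}+\ten{T}^{d}_{k,l})+M^{*}(x,\ten{G})\bigr)$, while the Fenchel--Young inequality in Orlicz form combined with the convexity inequality $M^{*}(x,\delta\ten{G})\leq\delta M^{*}(x,\ten{G})$ for $\delta\in(0,1]$ (which uses $M^{*}(x,0)=0$) yields
\[
\ten{G}:\tilde{\ten{T}}^{d}\leq\delta M^{*}(x,\ten{G})+M\!\left(x,\tfrac{1}{\delta}\tilde{\ten{T}}^{d}\right).
\]
Choosing $\delta=d/2$ with $d=\min(1,c)$ preserves $c-\delta=(2c-d)/2>0$, and integration in time from $0$ to $t$ delivers the claimed inequality with the constant $C$ in the stated form.

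The main delicate point is guaranteeing $\int_{Q}M(x,\tfrac{2}{d}\tilde{\ten{T}}^{d})\dxdt<\infty$; since no $\Delta_{2}$-condition on $M$ is available, one is forced to rely on the $L^{\infty}$-estimate \eqref{eq:24} from Lemma \ref{wyrzucenie_war_brzeg}, which confines $\tilde{\ten{T}}$ to a fixed ball on which the Carath\'eodory function $M(x,\cdot)$ produces an integrable bound. The uniform-in-$(k,l)$ bound on $\mathcal{E}(0)$ is a secondary, standard projection-plus-Bessel argument based on the $\ten{D}$-orthogonal decomposition in \eqref{eq:warunki_pocz_app}.
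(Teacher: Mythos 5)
Your proposal is correct and follows essentially the same route as the paper: differentiate $\mathcal{E}$, use the Galerkin equations (the first to annihilate the elastic work, the evolution equations tested against $\ten{D}(\ten{\varepsilon}(\vc{u}_{k,l})-\ten{\varepsilon}^{\bf p}_{k,l})$, i.e. exploiting $\alpha_{k,l}^n=\gamma_{k,l}^n$) to reduce to $\frac{d}{dt}\mathcal{E}=-\int_\Omega\ten{G}:\ten{T}^d_{k,l}\dx$, then split off $\tilde{\ten{T}}^d$, apply Assumption \ref{ass_G}(b) together with Fenchel--Young and convexity with the weight $d/2$, and integrate in time. Your closing remarks on the finiteness of $\int_Q M(x,\frac{2}{d}\tilde{\ten{T}}^d)\dxdt$ and the uniform bound on $\mathcal{E}(0)$ address points the paper leaves implicit, and are consistent with it.
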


\begin{proof}
Let us start with calculating the time derivative of the potential energy $\mathcal{E}(t)$. For a.a. $t\in [0,T]$ we obtain
\begin{equation}
\begin{split}
\frac{d}{dt} \mathcal{E}(\ten{\varepsilon}(\vc{u}_{k,l}) , \ten{\varepsilon}^{\bf p}_{k,l}) 
& = 
\int_{\Omega}\ten{D}(\ten{\varepsilon}(\vc{u}_{k,l}) - \ten{\varepsilon}^{\bf p}_{k,l}):(\ten{\varepsilon}(\vc{u}_{k,l}))_t 
\dx 
\\
& \quad
-
\int_{\Omega}\ten{D}(\ten{\varepsilon}(\vc{u}_{k,l}) - \ten{\varepsilon}^{\bf p}_{k,l}): (\ten{\varepsilon}^{\bf p}_{k,l})_t \dx .
\nonumber
\end{split}
\end{equation}
The terms on the right hand side of abovementioned equation may be rewritten with usage of approximate system of equations \eqref{app_system}. Firstly, for each $n\leq k$ let us multiply \eqref{app_system}$_1$ by $(\alpha_{k,l}^n)_t$. After summing over $n=1,...,k$ we get
\begin{equation}
\int_{\Omega}  \ten{D}(\ten{\varepsilon}(\vc{u}_{k,l}) - \ten{\varepsilon}^{\bf p}_{k,l}) : (\ten{\varepsilon}(\vc{u}_{k,l}))_t \dx =0 .
\label{eq:42}
\end{equation}
Then for each $n\leq k$ let us multiply \eqref{app_system}$_3$ by $\gamma_{k,l}^n$ and for each $m\leq l$ let us multiply \eqref{app_system}$_4$ by $\delta_{k,l}^n$. Summing over $n=1,..,k$ and $m=1,...,l$ we obtain
\begin{equation}
\int_{\Omega} (\ten{\varepsilon}^{\bf p}_{k,l})_t :\ten{D}(\ten{\varepsilon}(\vc{u}_{k,l}) - \ten{\varepsilon}^{\bf p}_{k,l})\dx
=
\int_{\Omega}\ten{G}(\tilde{\theta} + \theta_{k,l} ,  \tilde{\ten{T}}^d + \ten{T}^d_{k,l}  ) : \ten{T}_{k,l} \dx .
\label{eq:43}
\end{equation}
Hence
\begin{equation}
\begin{split}
\frac{d}{dt} \mathcal{E}(\ten{\varepsilon}(\vc{u}_{k,l}) , \ten{\varepsilon}^{\bf p}_{k,l}) 
& = 
-\int_{\Omega}\ten{G}(\tilde{\theta} + \theta_{k,l},  \tilde{\ten{T}}^d + \ten{T}^d_{k,l} ) : \ten{T}_{k,l} \dx ,
\label{ene}
\end{split}
\end{equation}
and then using the property of traceless matrices we get
\begin{equation}
\begin{split}
\frac{d}{dt} \mathcal{E}(\ten{\varepsilon}(\vc{u}_{k,l}) , \ten{\varepsilon}^{\bf p}_{k,l}) 
& = 
-\int_{\Omega}\ten{G}(\tilde{\theta} + \theta_{k,l}, \tilde{\ten{T}}^d + \ten{T}^d_{k,l}  ) : (\tilde{\ten{T}}^d + \ten{T}_{k,l}^d)\dx 
\\ 
&  \quad
+\int_{\Omega}\ten{G}(\tilde{\theta} + \theta_{k,l},  \tilde{\ten{T}}^d + \ten{T}^d_{k,l} ) : \tilde{\ten{T}}^d \dx .
\nonumber
\end{split}
\end{equation}
Thus, using Assumption \ref{ass_G} and Fenchel-Young inequality we estimate the changes of potential energy by
\begin{equation}
\begin{split}
\frac{d}{dt} \mathcal{E}(\ten{\varepsilon}(\vc{u}_{k,l}) , \ten{\varepsilon}^{\bf p}_{k,l}) 
& 
\leq
- c \left( \int_{\Omega} M(x,\tilde{\ten{T}}^d + \ten{T}_{k,l}^d ) \dx
+ \int_{\Omega} M^*(x,\ten{G}(\tilde{\theta} + \theta_{k,l},\tilde{\ten{T}}^d + \ten{T}_{k,l}^d )) \dx\right)
\\
& \quad + \int_{\Omega}M(x,\frac{2}{d}\tilde{\ten{T}}^d) \dx
+ \int_{\Omega} M^* (x,\frac{d}{2}\ten{G}(\tilde{\theta} + \theta_{k,l},\tilde{\ten{T}}^d + \ten{T}_{k,l}^d)) \dx,
\nonumber
\end{split}\label{osz1}
\end{equation}
where $d=\min(1,c)$. Then by convexity of $N$-function we obtain
\begin{equation}
\begin{split}
\frac{d}{dt} \mathcal{E}(\ten{\varepsilon}(\vc{u}_{k,l}) , \ten{\varepsilon}^{\bf p}_{k,l}) 
& \leq
- c \left( \int_{\Omega} M(x,\tilde{\ten{T}}^d + \ten{T}_{k,l}^d ) \dx
+ \int_{\Omega} M^*(x,\ten{G}(\tilde{\theta} + \theta_{k,l},\tilde{\ten{T}}^d + \ten{T}_{k,l}^d)) \dx\right)
\\
& \quad + \int_{\Omega}M(x,\frac{2}{d}\tilde{\ten{T}}^d) \dx
+ \frac{d}{2}\int_{\Omega} M^* (x,\ten{G}(\tilde{\theta} + \theta_{k,l},\tilde{\ten{T}}^d + \ten{T}_{k,l}^d)) \dx .
\nonumber
\end{split}\label{osz1}
\end{equation}
Finally, after integration over time interval $(0,t)$, with $0\le t\le T$ we obtain
\begin{equation}\label{osz2}
\begin{split}
 \mathcal{E}(\ten{\varepsilon}(\vc{u}_{k,l}) , \ten{\varepsilon}^{\bf p}_{k,l}) (t)
&
+ c \int_Q M(x,\tilde{\ten{T}}^d + \ten{T}_{k,l}^d ) \dxdt
+ \frac{2c - d}{2}\int_{Q} M^*(x,\ten{G}(\tilde{\theta} + \theta_{k,l},\tilde{\ten{T}}^d + \ten{T}_{k,l}^d )) \dxdt
\\
&
\leq
\int_{Q}M(x,\frac{2}{d}\tilde{\ten{T}}^d) \dxdt
+  \mathcal{E}(\ten{\varepsilon}(\vc{u}_{k,l}) , \ten{\varepsilon}^{\bf p}_{k,l})(0),
\end{split}
\nonumber
\end{equation}
which completes the proof.
\end{proof}

\begin{uwaga} 
From Lemma \ref{pom_2} we know that the sequence $\{\ten{T}_{k,l}^d\}$ is uniformly bounded in $L_M(Q,\mathcal{S}^3)$ with respect to $k$ and $l$, also the sequence $\{\ten{G}(\tilde{\theta} + \theta_{k,l},\tilde{\ten{T}}^d + \ten{T}_{k,l}^d )\}$ in the space $L_{M^*}(Q,\mathcal{S}^3)$  with respect to $k$ and $l$. Hence using the Fenchel-Young inequality the sequence $\{(\tilde{\ten{T}}^d + \ten{T}_{k,l}^d ):\ten{G}(\tilde{\theta} + \theta_{k,l},\tilde{\ten{T}}^d + \ten{T}_{k,l}^d )\}$ is uniformly bounded in $L^1(Q)$.
\label{wsp_ogr_T}
\end{uwaga}

\begin{uwaga}
From Lemma \ref{pom_2} we know that the sequence $\{\ten{T}_{k,l}\}$ is uniformly bounded in $L^{\infty}(0,T,L^2(\Omega,\mathcal{S}^3))$ and in particular in $L^2(0,T,L^2(\Omega,\mathcal{S}^3))$.  
\end{uwaga}

Let $P^l$ be a projection from $H^s(\Omega)$ on ${\rm lin}\{\ten{\zeta}_1^k,\ldots,\ten{\zeta}_l^k\}$, defined by  $P^l(\ten{v}):=\sum_{i=1}^{l}(\ten{v},\ten{\zeta}_i)_{\ten{D}}\ten{\zeta}_i$, then $\|P^l\varphi\|_{H^s}\le\|\varphi\|_{H^s}$. Furthermore, let $P^k$ be a projection from $H^s(\Omega)$ on ${\rm lin}\{\ten{\varepsilon}(\vc{w}_1),\ldots,\ten{\varepsilon}(\vc{w}_1)\}$, defined by 
$P^k(\ten{v}):=\sum_{i=1}^{k}(\ten{v},\ten{\varepsilon}(\vc{w}_i))_{\ten{D}}\ten{\varepsilon}(\vc{w}_i)$.
Since $P^k$ is the projection of a finite dimensional space, and the dimension of this space is independent of $l$, there exists a constant, also independent of $l$ such that 
 $\|P^k\varphi\|_{H^s}\le c\|\varphi\|_{H^s}$.
 
\begin{lemat}
The sequence $\{(\ten{\varepsilon}^{\bf p}_{k,l})_t\}$ is uniformly bounded in $L^{1}(0,T,(H^{s}(\Omega,\mathcal{S}^3))')$ with respect to $k$ and $l$. 
\label{wsp_org_epa}
\end{lemat}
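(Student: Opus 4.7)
The plan is to bound the duality pairing $\int_\Omega (\ten{\varepsilon}^{\bf p}_{k,l})_t:\varphi\dx$ for an arbitrary test function $\varphi \in H^s(\Omega,\mathcal{S}^3)$ by exploiting the fact that $(\ten{\varepsilon}^{\bf p}_{k,l})_t$ lies in the finite-dimensional Galerkin subspace $V_{k,l}:=\mathrm{span}\{\ten{\varepsilon}(\vc{w}_1),\ldots,\ten{\varepsilon}(\vc{w}_k),\ten{\zeta}_1^k,\ldots,\ten{\zeta}_l^k\}$, which is orthogonal with respect to the weighted inner product $(\cdot,\cdot)_{\ten{D}}$, and then appealing to the Galerkin equations \eqref{app_system}$_3$--\eqref{app_system}$_4$.

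Since $\ten{D}$ is positive definite with entries in $W^{1,p}(\Omega)$, $p>3$, the operator $\ten{D}^{-1}$ preserves $H^s$ regularity; for $\varphi\in H^s$ I define $\psi:=\ten{D}^{-1}\varphi$, with $\|\psi\|_{H^s}\le C\|\varphi\|_{H^s}$, so that $\int_\Omega (\ten{\varepsilon}^{\bf p}_{k,l})_t:\varphi\dx = ((\ten{\varepsilon}^{\bf p}_{k,l})_t,\psi)_{\ten{D}}$. Decomposing $\psi$ in the $(\cdot,\cdot)_{\ten{D}}$-geometry as $\psi = P^k\psi + P^l\psi + \psi^\perp$ with $\psi^\perp\in V_{k,l}^\perp$, the orthogonal piece drops out since $(\ten{\varepsilon}^{\bf p}_{k,l})_t\in V_{k,l}$, and the pairing reduces to $((\ten{\varepsilon}^{\bf p}_{k,l})_t,P^k\psi+P^l\psi)_{\ten{D}}$.

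I now invoke the approximate equations: \eqref{app_system}$_3$ and \eqref{app_system}$_4$ say precisely that the $(\cdot,\cdot)_{\ten{D}}$-pairing of $(\ten{\varepsilon}^{\bf p}_{k,l})_t$ with each $\ten{\varepsilon}(\vc{w}_n)$ or $\ten{\zeta}_m^k$ coincides with the corresponding pairing of $\ten{G}(\tilde{\theta}+\theta_{k,l},\tilde{\ten{T}}^d+\ten{T}_{k,l}^d)$, so by linearity $\int_\Omega (\ten{\varepsilon}^{\bf p}_{k,l})_t:\varphi\dx = \int_\Omega \ten{D}\ten{G}(\tilde{\theta}+\theta_{k,l},\tilde{\ten{T}}^d+\ten{T}_{k,l}^d):(P^k\psi+P^l\psi)\dx$. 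I close the estimate by H\"older in the $L^1$--$L^\infty$ duality, using boundedness of $\ten{D}$, the Sobolev embedding $H^s\hookrightarrow L^\infty$ (valid since $s>\tfrac{3}{2}$), and the $H^s$ operator-norm bounds on $P^k,P^l$ recorded just before the statement; taking the supremum over $\|\varphi\|_{H^s}\le 1$ and integrating in time yields $\|(\ten{\varepsilon}^{\bf p}_{k,l})_t\|_{L^1(0,T;(H^s)')}\le C\|\ten{G}\|_{L^1(Q)}$. The right-hand side is uniformly controlled by Remark \ref{wsp_ogr_T} combined with the standard embedding $L_{M^*}(Q)\hookrightarrow L^1(Q)$, which holds because $M^*(x,\eta)/|\eta|\to\infty$.

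The main subtlety will be the uniformity of the projection bounds: the simultaneous-basis construction of $\{\ten{\zeta}_m^k\}$ makes $P^l$ an $H^s$-contraction uniformly in $l$, which is exactly what is required for the first limit passage $l\to\infty$ at fixed $k$, whereas $P^k$ yields only a $k$-dependent $H^s$ bound, consistent with the two-stage limiting scheme. A secondary technical point to verify is that $\ten{D}^{-1}\varphi$ really lies in $H^s$ with a uniform continuity constant, which follows from the Sobolev-algebra structure of $W^{1,p}$ for $p>3$ and the pointwise positive-definiteness of $\ten{D}$.
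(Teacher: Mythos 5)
Your proposal is correct and follows essentially the same route as the paper: decompose the test function via the projections $P^k$ and $P^l$, use the Galerkin equations \eqref{app_system}$_3$--\eqref{app_system}$_4$ to replace $(\ten{\varepsilon}^{\bf p}_{k,l})_t$ by $\ten{G}(\tilde{\theta}+\theta_{k,l},\tilde{\ten{T}}^d+\ten{T}^d_{k,l})$, and close with the embedding $H^s(\Omega)\hookrightarrow L^\infty(\Omega)$ for $s>\tfrac{3}{2}$ together with the uniform $L^1(Q)$ bound on $\ten{G}$ coming from Lemma \ref{pom_2}. Your auxiliary step $\psi=\ten{D}^{-1}\varphi$ is merely a more careful bookkeeping of the $(\cdot,\cdot)_{\ten{D}}$-orthogonality that the paper uses implicitly, and your closing caveat about the $k$-dependence of the $P^k$-bound matches what the paper's own argument actually delivers.
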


\begin{proof}
Let $\varphi\in L^{\infty}(0,T,H^{s}(\Omega,\mathcal{S}^3))$ and we may estimate as follows
\begin{equation}
\begin{split}
\int_0^T |\langle (\ten{\varepsilon}^{\bf p}_{k,l})_t, \varphi\rangle |\dt &=
\int_0^T |\langle (\ten{\varepsilon}^{\bf p}_{k,l})_t, (P^k + P^l)\varphi\rangle |\dt
\\ &
\le\int_0^T |\langle (\ten{\varepsilon}^{\bf p}_{k,l})_t, P^k\varphi\rangle |\dt
+\int_0^T |\langle (\ten{\varepsilon}^{\bf p}_{k,l})_t, P^l\varphi\rangle |\dt ,
\end{split}
\end{equation}
where the equality results from orthogonality of subspaces $\mbox{lin}\{\ten{\varepsilon}(\vc{w}_1),\ldots,\ten{\varepsilon}(\vc{w}_k)\}$ and $\mbox{lin}\{\ten{\zeta}_1^k,\ldots, \ten{\zeta}_l^k\}$. Then
\begin{equation}
\begin{split}
\int_0^T |\langle (\ten{\varepsilon}^{\bf p}_{k,l})_t, \varphi\rangle |\dt &\le
\int_0^T |\int_\Omega
\ten{G}(\tilde{\theta} + \theta_{k,l},\tilde{\ten{T}}^d + \ten{T}_{k,l}^d) P^k\varphi\dx |\dt
\\ &
\quad +
\int_0^T |\int_\Omega
\ten{G}(\tilde{\theta} + \theta_{k,l},\tilde{\ten{T}}^d + \ten{T}_{k,l}^d) P^l\varphi\dx |\dt
\\ 
& \le 
\int_0^T\|\ten{G}(\tilde{\theta} + \theta_{k,l},\tilde{\ten{T}}^d + \ten{T}_{k,l}^d)\|_{L^{1}(\Omega)}
\|P^k\varphi\|_{L^{\infty}(\Omega)}\dt\\
&
\quad + \int_0^T\|\ten{G}(\tilde{\theta} + \theta_{k,l},\tilde{\ten{T}}^d + \ten{T}_{k,l}^d )\|_{L^{1}(\Omega)}
\|P^{l}\varphi\|_{L^{\infty}(\Omega)}\dt .
\end{split}
\end{equation}
Hence $s>\frac{3}{2}$ and by Sobolev inequality $\|P^{l}\varphi\|_{L^{\infty}(\Omega)} \leq \tilde{c} \|P^{l}\varphi\|_{H^s(\Omega)}$ and $\|P^{k}\varphi\|_{L^{\infty}(\Omega)} \leq \tilde{c}\|P^{k}\varphi\|_{H^s(\Omega)}$, where $\tilde{c}$ is an optimal embedding constant. Then

\begin{equation}
\begin{split}
\int_0^T |\langle (\ten{\varepsilon}^{\bf p}_{k,l})_t, \varphi\rangle |\dt 
&\le
  \tilde c\int_0^T\|\ten{G}(\tilde{\theta} + \theta_{k,l},\tilde{\ten{T}}^d + \ten{T}_{k,l}^d )\|_{L^{1}(\Omega)}
\|P^k\varphi\|_{H^{s}(\Omega)}\dt
\\ &
\quad + \tilde c \int_0^T\|\ten{G}(\tilde{\theta} + \theta_{k,l},\tilde{\ten{T}}^d + \ten{T}_{k,l}^d)\|_{L^{1}(\Omega)}
\|P^{l}\varphi\|_{H^{s}(\Omega)}\dt
\\ &
\le c\tilde c\int_0^T\|\ten{G}(\tilde{\theta} + \theta_{k,l},\tilde{\ten{T}}^d + \ten{T}_{k,l}^d )\|_{L^{1}(\Omega)}
\|\varphi\|_{H^{s}(\Omega)}\dt
\\ &
\quad + \tilde c\int_0^T\|\ten{G}(\tilde{\theta} + \theta_{k,l},\tilde{\ten{T}}^d + \ten{T}_{k,l}^d)\|_{L^{1}(\Omega)}
\|\varphi\|_{H^{s}(\Omega)}\dt
\\
&\le 
(1+c)\tilde c\|\ten{G}(\tilde{\theta} + \theta_{k,l},\tilde{\ten{T}}^d + \ten{T}_{k,l}^d )\|_{L^{1}(Q)}
\|\varphi\|_{L^{\infty}(0,T,H^{s}(\Omega))}.
\end{split}
\end{equation}
It is obvious that $\|\ten{G}(\tilde{\theta} + \theta_{k,l},\tilde{\ten{T}}^d + \ten{T}_{k,l}^d)\|_{L^{1}(Q)}$ is bounded. Hence there exists $C>0$ such that 
\begin{equation}
\sup_{\varphi\in L^{\infty}(0,T,H^{s}(\Omega))\atop
 \|\varphi\|_{L^{\infty}(0,T,H^{s}(\Omega))}\le 1 }\int_0^T
|\langle (\ten{\varepsilon}^{\bf p}_{k,l})_t, \varphi\rangle |\dt\le C
\end{equation}
and hence the sequence $\{(\ten{\varepsilon}^{\bf p}_{k,l})_t\}$ is uniformly bounded in $L^{1}(0,T,(H^{s}(\Omega,\mathcal{S}^3))')$.
\end{proof}

The remaining part is to consider the internal energy of the system. Two following lemmas come from \cite{GKSG_NH}.

\begin{lemat}\label{LinftyL1}
The sequence $\{\theta_{k,l}\}$ is uniformly bounded in $L^\infty(0,T;L^1(\Omega))$ with respect to $k$ and $l$.
\end{lemat}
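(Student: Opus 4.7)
The plan is to exploit the presence of the constant function $v_1 \equiv |\Omega|^{-1/2}$ in the Galerkin basis, which is the first Neumann eigenfunction with eigenvalue $\mu_1 = 0$. Testing the approximate heat equation \eqref{app_system}$_5$ against $v_1$ annihilates the diffusion term and leaves
\begin{equation*}
\frac{d}{dt}\int_\Omega \theta_{k,l}(t) \dx = \int_\Omega \mathcal{T}_k\!\big((\tilde{\ten{T}}^d + \ten{T}^d_{k,l}):\ten{G}(\tilde{\theta} + \theta_{k,l},\tilde{\ten{T}}^d + \ten{T}^d_{k,l})\big)\dx.
\end{equation*}
By Assumption \ref{ass_G}(b) the argument of $\mathcal{T}_k$ is pointwise nonnegative, so $0 \le \mathcal{T}_k(y) \le y$, and by the Remark following Lemma \ref{pom_2} the upper bound is uniformly controlled in $L^1(Q)$. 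Time-integrating from zero and using that testing \eqref{eq:warunki_pocz_app} against $v_1$ gives $\int_\Omega \theta_{k,l}(0)\dx = \int_\Omega \mathcal{T}_k(\theta_0)\dx$, together with $|\mathcal{T}_k(\theta_0)| \le |\theta_0|$, I obtain a uniform two-sided bound on the signed spatial mean $\int_\Omega \theta_{k,l}(t)\dx$.

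Next, to promote this bound on the mean to a bound on $\|\theta_{k,l}(t)\|_{L^1(\Omega)}$, I would carry out a renormalisation in the spirit of the one appearing in Definition \ref{df:solution}. Choose a family $\{S_\delta\}_{\delta > 0} \subset C^2(\mathbb{R})$ of convex approximations of $|\cdot|$ satisfying $|S_\delta'| \le 1$, $S_\delta'' \ge 0$, and $S_\delta \to |\cdot|$ pointwise. Since $\theta_{k,l}(t,\cdot)$ is smooth in $x$ as a finite linear combination of eigenfunctions, the spatial chain rule gives $\frac{d}{dt}\int_\Omega S_\delta(\theta_{k,l})\dx = \int_\Omega S_\delta'(\theta_{k,l})(\theta_{k,l})_t\dx$, and $S_\delta'(\theta_{k,l})$ may be replaced by its orthogonal projection $P_l[S_\delta'(\theta_{k,l})]$ using $L^2$-orthogonality of $(\theta_{k,l})_t$ to the complement of $\operatorname{span}\{v_1,\ldots,v_l\}$. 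Substituting this admissible test function into the Galerkin equation produces a dissipative Laplacian term which, modulo the projection, equals the nonnegative quantity $\int_\Omega S_\delta''(\theta_{k,l})|\nabla\theta_{k,l}|^2\dx$, and a right-hand side bounded by $\int_\Omega |S_\delta'(\theta_{k,l})|\cdot\mathcal{T}_k(\ldots)\dx \le \|\mathcal{T}_k(\ldots)\|_{L^1(Q)} \le C$. Letting $\delta \to 0$ by dominated convergence and integrating in time would then yield the required $L^\infty(0,T;L^1(\Omega))$-bound.

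The main technical obstacle is the Galerkin truncation error in the renormalisation step, because $S_\delta'(\theta_{k,l})$ generally leaves $\operatorname{span}\{v_1,\ldots,v_l\}$ and the commutator term $\int_\Omega \nabla\theta_{k,l}\cdot\nabla\big(S_\delta'(\theta_{k,l}) - P_l[S_\delta'(\theta_{k,l})]\big)\dx$ has no a priori definite sign; a related difficulty is that $\|\theta_{k,l}(0)\|_{L^1}$ is not directly controlled by $\|\theta_0\|_{L^1}$ since the $L^2$-projection of $\mathcal{T}_k(\theta_0)$ onto Neumann eigenfunctions need not be an $L^1$-contraction. I would address these either by invoking the $H^1$-boundedness of $P_l$ on the Neumann eigenbasis combined with a careful sign analysis, or by following \cite{GKSG_NH} in performing an indirect comparison with the physical temperature $\hat\theta_{k,l} = \theta_{k,l} + \tilde\theta$: the nonnegativity of the original initial datum $\hat\theta_0$ together with Lemma \ref{wyrzucenie_war_brzeg} bounding $\tilde\theta$ uniformly provides, via a maximum-principle-type argument, control of the negative part $\theta_{k,l}^-$ which, combined with the mean bound above, yields the $L^1$-estimate.
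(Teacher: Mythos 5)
The paper itself gives no proof of Lemma \ref{LinftyL1}: it is recalled from \cite{GKSG_NH} without proof, so your argument has to stand on its own, and as written it does not. Your first step is correct: testing \eqref{app_system}$_{(5)}$ with the constant eigenfunction $v_1$ kills the diffusion term, the truncated source is pointwise nonnegative by Assumption \ref{ass_G}(b) and uniformly bounded in $L^1(Q)$ by Lemma \ref{pom_2}, and \eqref{eq:warunki_pocz_app} controls the initial mean, so you get a two-sided bound on the signed mean $\int_\Omega\theta_{k,l}(t)\dx$. But the whole content of the lemma is the uniform control of the negative part $\int_\Omega\theta_{k,l}^-(t)\dx$: since Lemma \ref{lm:7} already yields an $l$-uniform (though $k$-dependent) $L^\infty(0,T;L^2(\Omega))\subset L^\infty(0,T;L^1(\Omega))$ bound, the only nontrivial assertion is $k$-uniformity, and that is exactly the part your proposal leaves open.

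The renormalisation step, where that control should come from, does not go through at the Galerkin level, as you yourself concede: the admissible test function is $P_l[S'_\delta(\theta_{k,l})]$, where $P_l$ denotes the $L^2(\Omega)$-orthogonal projection onto ${\rm span}\{v_1,\ldots,v_l\}$, and the commutator $\int_\Omega\nabla\theta_{k,l}\cdot\nabla\big(S'_\delta(\theta_{k,l})-P_l[S'_\delta(\theta_{k,l})]\big)\dx$ has no sign. In addition --- a point you do not flag --- the source term $\int_\Omega\mathcal{T}_k(\ldots)\,P_l[S'_\delta(\theta_{k,l})]\dx$ can no longer be estimated by $\|\mathcal{T}_k(\ldots)\|_{L^1(\Omega)}$, because $P_l$ is not a contraction on $L^\infty$ (nor on $L^1$), and for the same reason $\|\theta_{k,l}(0)\|_{L^1(\Omega)}=\|P_l\mathcal{T}_k(\theta_0)\|_{L^1(\Omega)}$ is not controlled by $\|\theta_0\|_{L^1(\Omega)}$ uniformly in $k$ and $l$. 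The repairs you sketch do not close these gaps: $H^1$-orthogonality of the Neumann eigenbasis only converts the commutator into $-\int_\Omega\theta_{k,l}\,\big(S'_\delta(\theta_{k,l})-P_l[S'_\delta(\theta_{k,l})]\big)\dx$, whose natural estimate involves $\|\theta_{k,l}\|_{L^2(\Omega)}$ and is therefore $k$-dependent through Lemma \ref{lm:7}; and a maximum-principle or comparison argument with the physical temperature $\theta_{k,l}+\tilde\theta$ is not available for the approximation, since the spectral projection does not preserve positivity --- it is the tool one uses after the passage $l\to\infty$, for the equation satisfied by the limit $\theta_k$, not for the finite-dimensional system. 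So the decisive estimate, a bound on $\int_\Omega\theta_{k,l}^-(t)\dx$ uniform in both parameters, is missing; to complete the argument you would have to either produce it directly or restructure the proof so that the uniform-in-$k$ $L^1$ bound is only claimed after the first limit passage, along the lines of \cite{GKSG_NH}, to which the paper defers for this lemma.
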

 
\begin{lemat}
There exists a constant $C$, depending on domain $\Omega$ and time interval $(0,T)$, such that for every $k\in\mathbb{N}$
\begin{equation}
\begin{split}
\sup_{0\leq t\leq T}&\|\theta_{k,l}(t)\|^2_{L^2(\Omega)} +
\|\theta_{k,l}\|^2_{L^2(0,T,W^{1,2}(\Omega))} +
\|(\theta_{k,l})_t\|^2_{L^2(0,T,W^{-1,2}(\Omega))}
\\
& \leq C\Big(\|\mathcal{T}_k\Big( (\tilde{\ten{T}}^d + \ten{T}_{k,l}^d  ):\ten{G}(\tilde{\theta} + \theta_{k,l},\tilde{\ten{T}}^d + \ten{T}_{k,l}^d) \Big)\|^2_{L^2(0,T,L^2(\Omega))}+
\|\mathcal{T}_k(\theta_0)\|_{L^2(\Omega)}^2\Big).
\label{numer}
\end{split}
\end{equation}
\label{lm:7}
\end{lemat}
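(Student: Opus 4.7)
The natural plan is to test the approximate heat equation \eqref{app_system}$_5$ against $\theta_{k,l}$ itself and then derive the $W^{-1,2}$ estimate for the time derivative by duality through the Galerkin projection. Concretely, for each $m=1,\dots,l$ I would multiply the $m$-th equation of \eqref{app_system}$_5$ by $\beta^m_{k,l}(t)$ and sum in $m$; by orthonormality of $\{v_m\}$ in $L^2(\Omega)$ this yields the identity
\begin{equation*}
\tfrac{1}{2}\tfrac{d}{dt}\|\theta_{k,l}(t)\|_{L^2(\Omega)}^2 + \|\nabla\theta_{k,l}(t)\|_{L^2(\Omega)}^2 = \int_{\Omega}\mathcal{T}_k\!\Big((\tilde{\ten{T}}^d+\ten{T}^d_{k,l}):\ten{G}(\tilde{\theta}+\theta_{k,l},\tilde{\ten{T}}^d+\ten{T}^d_{k,l})\Big)\,\theta_{k,l}\dx.
\end{equation*}
Applying Cauchy--Schwarz to the right-hand side and absorbing via Young's inequality with a small parameter into $\|\theta_{k,l}\|_{L^2(\Omega)}^2$ (which is then handled by Gronwall in time), I obtain after integration over $(0,t)$ and taking the supremum in $t\in[0,T]$ the first two norms on the left-hand side of \eqref{numer}, with the compatibility of the initial datum $\theta_{k,l}(0)$ and $\mathcal{T}_k(\theta_0)$ following directly from \eqref{eq:warunki_pocz_app}.

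For the $W^{-1,2}$-bound on $(\theta_{k,l})_t$, I would fix $\varphi\in L^2(0,T;W^{1,2}(\Omega))$ with $\|\varphi\|_{L^2(0,T;W^{1,2})}\le 1$ and test against $P^l\varphi$, where $P^l$ denotes orthogonal projection in $L^2(\Omega)$ onto $\mathrm{lin}\{v_1,\dots,v_l\}$. Since $\{v_m\}$ is simultaneously orthonormal in $L^2(\Omega)$ and orthogonal in $W^{1,2}_n(\Omega)$ (they are Neumann eigenfunctions of $-\Delta$), the operator $P^l$ is a contraction both on $L^2$ and on $W^{1,2}$; moreover $(\theta_{k,l})_t$ lies in the same finite-dimensional span, so $\langle (\theta_{k,l})_t,\varphi\rangle = \langle (\theta_{k,l})_t,P^l\varphi\rangle$. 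Using the equation on this projection gives
\begin{equation*}
|\langle (\theta_{k,l})_t,\varphi\rangle| \le \|\nabla\theta_{k,l}\|_{L^2(\Omega)}\|\nabla P^l\varphi\|_{L^2(\Omega)} + \Big\|\mathcal{T}_k\!\big((\tilde{\ten{T}}^d+\ten{T}^d_{k,l}):\ten{G}(\cdots)\big)\Big\|_{L^2(\Omega)}\|P^l\varphi\|_{L^2(\Omega)}.
\end{equation*}
Squaring, integrating in time, and taking the supremum over $\varphi$ in the unit ball of $L^2(0,T;W^{1,2})$ then produces the third term in \eqref{numer}, controlled by the same right-hand side thanks to the estimate on $\|\nabla\theta_{k,l}\|_{L^2(Q)}$ already obtained.

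I do not expect any serious obstacle here: the heat equation \eqref{app_system}$_5$ is linear in $\theta_{k,l}$ with a source that has been truncated by $\mathcal{T}_k$ to an $L^\infty$--hence $L^2$--function, so the argument is a textbook energy estimate for linear parabolic equations with Neumann boundary data, conducted at the Galerkin level. The only point deserving a little care is the boundedness of $P^l$ in $W^{1,2}$, which is where the choice of the eigenbasis $\{v_m\}$ of the Neumann Laplacian--rather than an arbitrary orthonormal $L^2$-basis--is essential; beyond that, the whole estimate is robust enough that the constant $C$ depends only on $\Omega$ and $T$ and not on $k$, $l$.
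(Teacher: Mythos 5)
Your proof is correct: the paper states this lemma without proof (it is quoted from \cite{GKSG_NH}), and the argument expected there is exactly the standard Galerkin energy estimate you give — testing \eqref{app_system}$_5$ with $\theta_{k,l}$ itself, absorbing via Young/Gronwall, using \eqref{eq:warunki_pocz_app} for the initial datum, and obtaining the $W^{-1,2}$ bound by duality through the projection $P^l$, whose $W^{1,2}$-contractivity is guaranteed by the Neumann eigenbasis. So you take essentially the same route; no gaps.
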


Estimates in Lemma \ref{lm:7} depend on $k$. To complete this Section we observe that the uniform boundedness of solutions implies the global existence of approximate solutions. For each $n=1,...,k$ and $m=1,...,l$ the solutions $\{\alpha_{k,l}^n(t),\beta_{k,l}^m(t),\gamma_{k,l}^n(t),\delta_{k,l}^m(t)\}$ exist on the whole time interval $[0,T]$.

\subsection{Limit passage $l \to\infty$ and uniform estimates}
Multiplying \eqref{app_system} by time dependent test functions $\varphi_1(t),\varphi_2(t),\varphi_3(t)\in C^{\infty}([0,T])$ and $\varphi_4(t)\in C^{\infty}_c([-\infty,T])$ and then after integration over time interval $[0,T]$, we obtain the following system of equations
\begin{equation}
\begin{split}
\int_0^T\int_{\Omega}  \ten{T}_{k,l} : \ten{\varepsilon}(\vc{w}_n) \varphi_1(t)\dxdt &= 0
\\[1ex]
\ten{T}_{k,l} &= \ten{D}(\ten{\varepsilon}(\vc{u}_{k,l}) - \ten{\varepsilon}^{\bf p}_{k,l} ),
\\[1ex]
\int_0^T\int_{\Omega}(\ten{\varepsilon}^{\bf p}_{k,l})_t : \ten{\varepsilon}(\vc{w}_n) \varphi_2(t)\dxdt &= 
\int_0^T\int_{\Omega}\ten{G}(\tilde{\theta} + \theta_{k,l},  \tilde{\ten{T}}^d + \ten{T}^d_{k,l} ) : \ten{\varepsilon}(\vc{w}_m) \varphi_2(t)\dxdt 
\\[1ex]
\int_0^T\int_{\Omega}(\ten{\varepsilon}^{\bf p}_{k,l})_t : \ten{\zeta}_m^k \varphi_3(t)\dxdt &= 
\int_0^T\int_{\Omega}\ten{G}(\tilde{\theta} + \theta_{k,l},  \tilde{\ten{T}}^d + \ten{T}^d_{k,l}  ) : \ten{\zeta}_m^k \varphi_3(t)\dxdt 
\\[1ex]
-\int_0^T\int_{\Omega}\theta_{k,l} v_m\varphi_4'(t)\dxdt  
-&\int_{\Omega}\theta_{0,k,l}(x) v_m\varphi_4(0)\dx 
+ \int_0^T\int_{\Omega}\nabla \theta_{k,l}\cdot\nabla v_m \varphi_4(t)\dxdt 
\\[1ex]
= \int_0^T\int_{\Omega} \mathcal{T}_k(&(\tilde{\ten{T}}^d + \ten{T}_{k,l}^d ):  \ten{G}(\tilde{\theta} + \theta_{k,l} ,  \tilde{\ten{T}}^d + \ten{T}^d_{k,l} ) ) v_m \varphi_4(t)\dxdt 
\end{split}
\label{app_system_0T}
\end{equation}
where the fist and the third equation holds for $n=1,...,k$ and the fourth and the fifth holds for $m=1,...,l$.

Firstly, we pass to the limit with $l \rightarrow \infty$ - Galerkin approximation of temperature. From the previous section we get uniform boundedness with respect to $l$ for appropriate sequences. Using appropriate subsequence, but still denoted by the indexes $k$ and $l$,  we get the following convergences

\begin{equation}
\begin{array}{cl}
\ten{T}_{k,l}\rightharpoonup \ten{T}_k  &  \mbox{weakly in }   L^2(Q,\mathcal{S}^3),\\
\ten{T}^d_{k,l}\rightharpoonup^{*} \ten{T}^d_k  &  \mbox{weakly* in }   L_M(Q,\mathcal{S}^3_d),\\
\ten{G}(\tilde{\theta} + \theta_{k,l},\tilde{\ten{T}}^d + \ten{T}_{k,l}^d)\rightharpoonup^{*} \ten{\chi}_k  & \mbox{weakly* in } L_{M^*}(Q,\mathcal{S}^3_d) , \\
\theta_{k,l}\rightharpoonup \theta_k  &  \mbox{weakly in }   L^2(0,T,W^{1,2}(\Omega)),\\
\theta_{k,l}\rightarrow \theta_k  &  \mbox{a.e. in } \Omega \times (0,T),\\
(\ten{\varepsilon}^{\bf p}_{k,l})_t \rightharpoonup (\ten{\varepsilon}^{\bf p}_{k})_t &  \mbox{weakly in } L^{1}(0,T,(H^{s}(\Omega,\mathcal{S}^3))').
\end{array}
\label{eq:44}
\end{equation}

We pass to the limit with $l\to\infty$ in \eqref{app_system_0T} using convergences from \eqref{eq:44}.
For $n=1,...,k $ and $m\in\mathbb{N}$ the following equations hold
\begin{equation}
\begin{split}
\int_0^T\int_{\Omega}  \ten{T}_{k} : \ten{\varepsilon}(\vc{w}_n) \varphi_1(t)\dxdt &= 0 ,
\\
\int_0^T\int_{\Omega}(\ten{\varepsilon}^{\bf p}_{k})_t : \ten{\varepsilon}(\vc{w}_m) \varphi_2(t)\dxdt &= 
\int_0^T\int_{\Omega}\ten{\chi}_k : \ten{\varepsilon}(\vc{w}_m) \varphi_2(t)\dxdt ,
\\
\int_0^T\int_{\Omega}(\ten{\varepsilon}^{\bf p}_{k})_t : \ten{\zeta}_m^k \varphi_3(t)\dxdt &= 
\int_0^T\int_{\Omega}\ten{\chi}_k : \ten{\zeta}_m^k \varphi_3(t)\dxdt .
\end{split}
\label{app_system_0T2a}
\end{equation}
Moreover, $\{\ten{\varepsilon}(\vc{w}_n),\ten{\zeta}_m \}_{n=1,..,k;m=1,..,\infty}$ is a base of whole space $H^s(\Omega,\mathcal{S}^3) $ then \eqref{app_system_0T2a}$_{(2)}$  and \eqref{app_system_0T2a}$_{(3)}$ equations can be replaced by 
\begin{equation}
\int_0^T\int_{\Omega}(\ten{\varepsilon}^{\bf p}_{k})_t : \ten{\zeta} \dxdt = 
\int_0^T\int_{\Omega}\ten{\chi}_k : \ten{\zeta} \dxdt 
\label{app_system_0T2_2}
\end{equation}
for $\ten{\zeta} \in L^{\infty}(0,T,H^{s}(\Omega,\mathcal{S}^3))$. To show that the \eqref{app_system_0T2_2} holds also for all $\ten{\zeta} \in L_M(Q,\mathcal{S}^3)$ we proceed similar as in \cite{G1,GSW}.


To complete the limit passage in heat equation \eqref{app_system_0T}$_{(5)}$ we encounter the same problem as in \cite{GKSG_NH}, but here we should use different technique. It holds due to utilization the generalized Orlicz spaces instead of Lebesgue spaces. As previously, we may precisely consider the right hand side of \eqref{app_system_0T}$_{(5)}$ and this reasoning consists of three steps. The first step is to show the inequality in the Lemma \ref{lm:8}. The second step is to identify the weak limit of nonlinear term by using Minty-Browder trick. For Minty-Browder trick in nonreflexive spaces we refer the reader to \cite{Wroblewska20104136}. And finally, the last step is to show the convergence of right hand side of heat equation. 
We present detailed calculation for these steps below.

{\bf Step 1.} {\it Limiting inequality.}

\begin{lemat}
The following inequality holds for the solution of approximate systems.
\begin{equation}
\limsup_{l\rightarrow\infty}\int_{0}^{t}\int_{\Omega}\ten{G}(\tilde{\theta} + \theta_{k,l},\tilde{\ten{T}}^d + \ten{T}^d_{k,l}):\ten{T}^d_{k,l} \dxdt \leq
\int_{0}^{t}\int_{\Omega}\ten{\chi}_k:\ten{T}^d_k \dxdt .
\label{teza-8}
\end{equation}
\label{lm:8}
\end{lemat}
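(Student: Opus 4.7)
The plan is to turn both sides of \eqref{teza-8} into differences of potential-energy values at the times $0$ and $t$, and then to pass to the limit using weak lower semicontinuity of $\mathcal{E}$ together with the convergences listed in \eqref{eq:44}.

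First, I would repeat the calculation leading to \eqref{ene} in the proof of Lemma \ref{pom_2} and use that $\ten{G}$ takes values in the traceless matrices $\mathcal{S}^3_d$, so that $\ten{G}:\ten{T}_{k,l}=\ten{G}:\ten{T}^d_{k,l}$. Integrating in time gives the exact identity
\begin{equation*}
\int_0^t\int_\Omega \ten{G}(\tilde{\theta}+\theta_{k,l},\tilde{\ten{T}}^d+\ten{T}^d_{k,l}):\ten{T}^d_{k,l}\dxdt \;=\; \mathcal{E}(\ten{\varepsilon}(\vc{u}_{k,l}),\ten{\varepsilon}^{\bf p}_{k,l})(0) \,-\, \mathcal{E}(\ten{\varepsilon}(\vc{u}_{k,l}),\ten{\varepsilon}^{\bf p}_{k,l})(t).
\end{equation*}

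Next, I would take $\limsup_{l\to\infty}$ on both sides. The initial energies are prescribed by the Galerkin projection conditions \eqref{eq:warunki_pocz_app} onto an increasing family of subspaces and therefore converge to $\mathcal{E}(\ten{\varepsilon}(\vc{u}_k),\ten{\varepsilon}^{\bf p}_k)(0)$ as $l\to\infty$. For the terminal energies I would use that $\mathcal{E}=\tfrac12\|\ten{D}^{-1/2}\ten{T}\|_{L^2}^2$ is a continuous, convex, non-negative quadratic functional on $L^2(\Omega,\mathcal{S}^3)$, hence weakly lower semicontinuous. The pointwise-in-$t$ weak $L^2(\Omega)$ convergence $\ten{T}_{k,l}(\cdot,t)\rightharpoonup\ten{T}_k(\cdot,t)$ needed for this step I would derive from the uniform $L^\infty(0,T;L^2(\Omega))$ bound on $\ten{T}_{k,l}$ (remark after Lemma \ref{pom_2}) together with the equicontinuity of $\ten{\varepsilon}^{\bf p}_{k,l}$ into $(H^s(\Omega,\mathcal{S}^3))'$ provided by Lemma \ref{wsp_org_epa} and the fact that $\vc{u}_{k,l}$ lives in the fixed $k$-dimensional space ${\rm span}\{\vc{w}_1,\ldots,\vc{w}_k\}$ and is algebraically determined from $\ten{\varepsilon}^{\bf p}_{k,l}$ via \eqref{app_system}$_1$. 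Consequently
\begin{equation*}
\limsup_{l\to\infty}\int_0^t\int_\Omega \ten{G}_{k,l}:\ten{T}^d_{k,l}\dxdt \;\leq\; \mathcal{E}(\ten{\varepsilon}(\vc{u}_k),\ten{\varepsilon}^{\bf p}_k)(0)\,-\,\mathcal{E}(\ten{\varepsilon}(\vc{u}_k),\ten{\varepsilon}^{\bf p}_k)(t).
\end{equation*}

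Finally, I would close the argument by establishing the companion energy identity $\int_0^t\!\int_\Omega\ten{\chi}_k:\ten{T}^d_k\,\dxdt=\mathcal{E}(\ten{\varepsilon}(\vc{u}_k),\ten{\varepsilon}^{\bf p}_k)(0)-\mathcal{E}(\ten{\varepsilon}(\vc{u}_k),\ten{\varepsilon}^{\bf p}_k)(t)$ at the first-limit level. This is obtained by testing the limiting evolution \eqref{app_system_0T2_2} with $\ten{T}_k$ (equivalently with $\ten{T}^d_k$, since $\ten{\chi}_k$ is traceless as the weak-$*$ limit of the traceless maps $\ten{G}_{k,l}$) and combining with the elasto-static identity \eqref{app_system_0T2a}$_1$. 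The main obstacle is that \eqref{app_system_0T2_2} is a priori only stated for test functions in $L^\infty(0,T;H^s(\Omega,\mathcal{S}^3))$, while $\ten{T}^d_k$ lies in the non-reflexive space $L_M(Q,\mathcal{S}^3_d)$; extending the admissible test class by modular density, following the strategy of \cite{G1,GSW} announced just after \eqref{app_system_0T2_2}, is precisely the reason for having introduced the fractional-Sobolev basis $\{\ten{\zeta}^k_m\}\subset V_k^s$ in \eqref{Vks}. Once this extension is granted, the two displayed relations combine to give \eqref{teza-8}.
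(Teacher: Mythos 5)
Your overall architecture is the one used in the paper (energy balance at the $(k,l)$-level, lower semicontinuity of $\mathcal{E}$, and a companion energy identity for the limit functions, compared to give \eqref{teza-8}), but two of your steps are not justified as stated, and the second one is where the real work of the lemma lies. First, your passage to the limit in $\mathcal{E}(\ten{\varepsilon}(\vc{u}_{k,l}),\ten{\varepsilon}^{\bf p}_{k,l})(t)$ at a \emph{fixed} time $t$ needs $\ten{T}_{k,l}(\cdot,t)\rightharpoonup\ten{T}_k(\cdot,t)$ in $L^2(\Omega)$, which is not among the convergences \eqref{eq:44}. Lemma \ref{wsp_org_epa} gives an $L^1(0,T,(H^{s})')$ bound on $(\ten{\varepsilon}^{\bf p}_{k,l})_t$, i.e. a uniform BV-in-time bound, not equicontinuity, so an Arzel\`a--Ascoli argument does not apply; you would need a Helly-type selection principle for $(H^{s})'$-valued BV functions, and even then the weak limit at time $t$ is identified with $\ten{T}_k(t)$ only for a.e.\ $t$. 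This is exactly the difficulty the paper sidesteps by testing with $\psi_{\mu,\tau}$ from \eqref{psi-mu}: lower semicontinuity is then only needed for the time-averaged energy $\frac{1}{\mu}\int_{\tau}^{\tau+\mu}\mathcal{E}\dt$, which follows from weak convergence in $L^2(Q)$, at the price of the sign manipulation with $\ten{G}:(\tilde{\ten{T}}^d+\ten{T}^d_{k,l})\ge 0$ when finally letting $\mu\to 0$.

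Second, and more seriously, you treat the companion identity $\int_0^{t}\int_\Omega\ten{\chi}_k:\ten{T}^d_k\dxdt=\mathcal{E}(\ten{\varepsilon}(\vc{u}_k),\ten{\varepsilon}^{\bf p}_k)(0)-\mathcal{E}(\ten{\varepsilon}(\vc{u}_k),\ten{\varepsilon}^{\bf p}_k)(t)$ (the paper's \eqref{gran}) as following from the two limit equations once the test class of \eqref{app_system_0T2_2} is enlarged by modular density. The test-class extension is only the entry point; the substantive obstacle is the chain rule (integration by parts in time) for $\mathcal{E}$ when $(\ten{\varepsilon}^{\bf p}_k)_t$ is controlled only in $L^1(0,T,(H^{s})')$, $(\alpha^n_k)_t$ only in $L^1(0,T)$, and $\ten{T}^d_k$ lies in the non-reflexive space $L_M(Q,\mathcal{S}^3_d)$: the pairing $\int\ten{T}_k:(\ten{\varepsilon}(\vc{u}_k)-\ten{\varepsilon}^{\bf p}_k)_t$ cannot be converted into $\frac{d}{dt}\mathcal{E}$ by a Lions--Magenes type lemma in this setting. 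The paper therefore tests with time-mollified, time-localized functions $(\ten{T}^d_k*\eta_{\epsilon}\mathbf{1}_{(t_1,t_2)})*\eta_{\epsilon}$, passes $\epsilon\to 0$ using uniform integrability of $\{M(x,\ten{T}^d_k*\eta_\epsilon)\}$, $\{M^*(x,\ten{\chi}_k*\eta_\epsilon)\}$ and convergence in measure (Lemmas \ref{lm:20}, \ref{lm:19}, \ref{lm:16}) to get modular convergence, invokes Lemma \ref{lm:18} for the $L^1$ convergence of the product, and finally uses $\ten{\varepsilon}(\vc{u}_k),\ten{\varepsilon}^{\bf p}_k\in C_w([0,T],L^2(\Omega,\mathcal{S}^3))$ to send $t_1\to 0$ and evaluate the energy at the endpoints. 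Without this regularization-and-modular-convergence argument your final step ``the two displayed relations combine'' does not go through, so as written the proposal has a genuine gap precisely at the crux of the proof.
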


\begin{proof}
Let us start with the definition of function $\psi_{\mu,\tau}:\mathbb{R}_+\to\mathbb{R}_+$. For each $\mu>0, \tau\le T-\mu, s\ge0,$ the function $\psi_{\mu,\tau}$ is defined by 
\begin{equation}\label{psi-mu}
\psi_{\mu,\tau}(s)=\left\{
\begin{array}{lcl}
1&{\rm for}&s\in[0,\tau),\\
-\frac{1}{\mu}s+\frac{1}{\mu}\tau + 1 &{\rm for}& s \in[\tau, \tau+\mu),\\
0 &{\rm for} & s\ge \tau+\mu.
\end{array}\right.
\end{equation}
We use $\psi_{\mu,\tau}(t)$ as a test function in \eqref{ene}, then after integration over time interval $(0,T)$ we get

\begin{equation}\label{mu2}
\begin{split}
\int_{0}^{T}
\frac{d}{d t} \mathcal{E}(\ten{\varepsilon}(\vc{u}_{k,l}) , \ten{\varepsilon}^{\bf p}_{k,l}) \,\psi_{\mu,\tau}\dt
\
 = 
-
\int_0^T\int_{\Omega}\ten{G}(\tilde{\theta} + \theta_{k,l},\tilde{\ten{T}}^d + \ten{T}^d_{k,l}):\ten{T}^d_{k,l} \,\psi_{\mu,\tau}\dxdt.
\end{split}
\end{equation}
Integrating by parts the left hand side of \eqref{mu2} we obtain
\begin{equation}\label{mu3}
\begin{split}
\int_{0}^{T}
\frac{d}{d t} \mathcal{E}(\ten{\varepsilon}(\vc{u}_{k,l}) , \ten{\varepsilon}^{\bf p}_{k,l}) \,\psi_{\mu,\tau}\dt
=\frac{1}{\mu}\int_{\tau}^{\tau+\mu}\mathcal{E}(\ten{\varepsilon}(\vc{u}_{k,l}) , \ten{\varepsilon}^{\bf p}_{k,l})(t) \dt-
\mathcal{E}(\ten{\varepsilon}(\vc{u}_{k,l}) , \ten{\varepsilon}^{\bf p}_{k,l})(0).
\end{split}
\end{equation}
Passing to the limit with $l\to\infty$ and using the lower semicontinuity in $L^2(0,T,L^2(\Omega;\mathcal{S}^3))$ we get
\begin{equation}\label{mu4}
\begin{split}
\liminf\limits_{l\to\infty}\int_{0}^{T}
\frac{d}{d\tau}& \mathcal{E}(\ten{\varepsilon}(\vc{u}_{k,l}) , \ten{\varepsilon}^{\bf p}_{k,l}) \,\psi_{\mu,\tau}\dt
\\
&=\liminf\limits_{l\to\infty}\frac{1}{\mu}\int_{\tau}^{\tau+\mu}\mathcal{E}(\ten{\varepsilon}(\vc{u}_{k,l}) , \ten{\varepsilon}^{\bf p}_{k,l})(t) \dt-
\lim\limits_{l\to\infty}\mathcal{E}(\ten{\varepsilon}(\vc{u}_{k,l}) , \ten{\varepsilon}^{\bf p}_{k,l})(0)
\\
&\ge \frac{1}{\mu}\int_{\tau}^{\tau+\mu}\mathcal{E}(\ten{\varepsilon}(\vc{u}_{k}) , \ten{\varepsilon}^{\bf p}_{k})(t) \dt-
\mathcal{E}(\ten{\varepsilon}(\vc{u}_{k}) , \ten{\varepsilon}^{\bf p}_{k})(0).
\end{split}\end{equation}
Comparing \eqref{mu4} and \eqref{mu2} we obtain 
\begin{equation}\label{mu5}
\begin{split}
\liminf\limits_{l\to\infty}&\left(-
\int_0^T\int_{\Omega}\ten{G}(\tilde{\theta} + \theta_{k,l},\tilde{\ten{T}}^d + \ten{T}^d_{k,l}):\ten{T}^d_{k,l} \,\psi_{\mu,\tau}\dxdt \right) 
\\
&=
\liminf\limits_{l\to\infty}\int_{0}^{T}
\frac{d}{d t} \mathcal{E}(\ten{\varepsilon}(\vc{u}_{k,l}) , \ten{\varepsilon}^{\bf p}_{k,l}) \,\psi_{\mu,\tau}\dt
\\
&\ge \frac{1}{\mu}\int_{\tau}^{\tau+\mu}\mathcal{E}(\ten{\varepsilon}(\vc{u}_{k}) , \ten{\varepsilon}^{\bf p}_{k})(t) \dt-
\mathcal{E}(\ten{\varepsilon}(\vc{u}_{k}) , \ten{\varepsilon}^{\bf p}_{k})(0),
\end{split}
\end{equation}
which is equivalent to
\begin{equation}
\begin{split}
\limsup_{l\rightarrow\infty}\int_{0}^{T} &\int_{\Omega}\ten{G}(\tilde{\theta} + \theta_{k,l},\tilde{\ten{T}}^d + \ten{T}^d_{k,l}):\ten{T}^d_{k,l} \ \psi_{\mu,\tau}\dxdt 
\\
& \leq
-\frac{1}{\mu}\int_{\tau}^{\tau+\mu}\mathcal{E}(\ten{\varepsilon}(\vc{u}_{k}) , \ten{\varepsilon}^{\bf p}_{k})(t) \dt
+\mathcal{E}(\ten{\varepsilon}(\vc{u}_{k}) , \ten{\varepsilon}^{\bf p}_{k})(0).
\label{jedna_nierownosc}
\end{split}
\end{equation}

Since $(\alpha^n_k)_t$ is not regular enough to use as a test function in \eqref{app_system}$_{(1)}$ we may use the mollifier to improve its regularity. Thus let $\eta_\epsilon$ be a standard mollifier and we mollify with respect to time. Then let us choose  $\varphi_1(t)=((\alpha^n_k)_t*\eta_{\epsilon} \mathbf{1}_{(t_1,t_2)})*\eta_{\epsilon}$ as a test function in \eqref{app_system_0T2a}$_{(1)}$ and  $\ten{\zeta}=(\ten{T}_k^d*\eta_{\epsilon} \mathbf{1}_{(t_1,t_2)})*\eta_{\epsilon}$ as a test function in \eqref{app_system_0T2_2}, then

\begin{equation}
\begin{split}
\int_Q  \ten{T}_{k} : \ten{\varepsilon}(((\alpha^n_k)_t*\eta_{\epsilon}\mathbf{1}_{(t_1,t_2)})*\eta_{\epsilon}\vc{w}_n) \dxdt &= 0,
\\
\int_Q(\ten{\varepsilon}^{\bf p}_{k})_t : (\ten{T}_k^d*\eta_{\epsilon}\mathbf{1}_{(t_1,t_2)})*\eta_{\epsilon} \dxdt 
= 
\int_Q\ten{\chi}_{k} : &(\ten{T}_k^d*\eta_{\epsilon}\mathbf{1}_{(t_1, t_2)})*\eta_{\epsilon}\dxdt ,
\end{split}
\label{app_system_n}
\end{equation}
for  $n=1,...,k $. Summing \eqref{app_system_n}$_{(1)}$ over $n=1,...,k$ we obtain
\begin{equation}
\int_{t_1}^{t_2} \int_{\Omega}\ten{D}\left(\ten{\varepsilon}(\vc{u}_{k}) - \ten{\varepsilon}^{\bf p}_{k}\right)*\eta_{\epsilon}: (\ten{\varepsilon}(\vc{u}_{k})*\eta_{\epsilon})_t \dxdt =
0.
\label{pierwsze_r1}
\end{equation}
Moreover, using properties of traceless matrices 
\begin{equation}
\int_{t_1}^{t_2}\int_{\Omega}(\ten{\varepsilon}^{\bf p}_{k}*\eta_{\epsilon})_t:\ten{T}_{k}*\eta_{\epsilon} \dxdt =
\int_{t_1}^{t_2}\int_{\Omega}\ten{\chi}_{k}*\eta_{\epsilon}:\ten{T}_{k}*\eta_{\epsilon} \dxdt ,
\label{drugie_r2}
\end{equation}
and products in \eqref{drugie_r2} are well defined. Since for the matrices $\ten{A}\in\mathcal{S}^3_d$ and $\ten{B}\in\mathcal{S}^3$ the equivalence $\ten{A}:\ten{B}^d=\ten{A}:\ten{B}$ holds and the sequence $\{\ten{T}^d_{k}\}$ is uniformly bounded in $L_M(Q,\mathcal{S}^3_d)$. Subtracting \eqref{pierwsze_r1} and \eqref{drugie_r2} we get
\begin{equation}
\int_{t_1}^{t_2}\int_{\Omega}\ten{D}(\ten{\varepsilon}(\vc{u}_k) - \ten{\varepsilon}^{\bf p}_k)*\eta_{\epsilon}:\left((\ten{\varepsilon}(\vc{u}_k) - \ten{\varepsilon}^{\bf p}_k)*\eta_{\epsilon}\right)_t \dxdt 
 = - 
\int_{t_1}^{t_2}\int_{\Omega}\ten{\chi}_k*\eta_{\epsilon}:\ten{T}^d_k*\eta_{\epsilon} \dxdt .
\label{eq:57}
\end{equation}
Since $\ten{\varepsilon}(\vc{u}_k) - \ten{\varepsilon}^{\bf p}_k$ belongs to $L^2(Q,\mathcal{S}^3)$ we may pass to the limit with $\epsilon\to 0 $ in the left hand side of equation \eqref{eq:57}. 

To make a limit passage with $\epsilon\to 0$ on the right hand side of \eqref{eq:57} we use the lemmas presented in Appendix \ref{B}. From Lemma \ref{lm:20}, we know that 
sequences $\{M(x,\ten{T}_{k}^d*\eta_{\epsilon})\}$ and $\{M^*(x,\ten{\chi}_{k}*\eta_{\epsilon})\}$ are uniformly integrable. Moreover, $\{\ten{T}_{k}^d*\eta_{\epsilon}\}_{\epsilon}$ converge in measure to $\ten{T}_{k}^d$ and $\{\ten{\chi}_{k}*\eta_{\epsilon}\}_{\epsilon}$ convergence in measure to $\ten{\chi}_{k}$ (by Lemma \ref{lm:19}) as $\epsilon$ goes to $0$. Uniform integrability of the sequence and convergence in measure of this sequence imply (by Lemma \ref{lm:16}) that 
\begin{equation}
\begin{split}
\ten{T}_{k}^d*\eta_{\epsilon} &\xrightarrow{M} \ten{T}_{k}^d
\qquad \mbox{modularly in } L_M(Q),
\\
\ten{\chi}_{k}*\eta_{\epsilon} &\xrightarrow{M^*} \ten{\chi}_{k}
\qquad \mbox{modularly in } L_{M^*}(Q),
\end{split}
\end{equation}
as $\epsilon\to 0$.
On the basis of Lemma \ref{lm:18} we complete the limit passage on the right hand side of \eqref{eq:57}. Then we obtain the following equality 
\begin{equation}
\frac{1}{2}\int_{\Omega}\ten{D}(\ten{\varepsilon}(\vc{u}_k) - \ten{\varepsilon}^{\bf p}_k):(\ten{\varepsilon}(\vc{u}_k) - \ten{\varepsilon}^{\bf p}_k) \dx \Big|_{t_1}^{t_2}
 = - 
\int_{t_1}^{t_2}\int_{\Omega}\ten{\chi}_k:\ten{T}^d_k \dxdt .
\label{granica_l}
\end{equation}
Since $\ten\varepsilon({\vc{u}_k}),\ten{\varepsilon}^{\bf p}_k\in C_{w}([0,T],L^2(\Omega,\mathcal{S}^3))$, then we may pass
with $t_1\to 0$ and conclude
\begin{equation}\label{gran}
\mathcal{E}(\ten{\varepsilon}(\vc{u}_{k}) , \ten{\varepsilon}^{\bf p}_{k})(t_2) -
\mathcal{E}(\ten{\varepsilon}(\vc{u}_{k}) , \ten{\varepsilon}^{\bf p}_{k})(0)=- 
\int_{0}^{t_2}\int_{\Omega}\ten{\chi}_k:\ten{T}^d_k \dxdt .
\end{equation}
To complete the proof let us multiply \eqref{gran} by $\frac{1}{\mu}$ and integrate over the interval $(\tau, \tau+\mu)$. \begin{equation}
\begin{split}
\frac{1}{\mu}\int_{\tau}^{\tau+\mu}\mathcal{E}(\ten{\varepsilon}(\vc{u}_{k}) , \ten{\varepsilon}^{\bf p}_{k})(s) \ds -
\mathcal{E}(\ten{\varepsilon}(\vc{u}_{k}) , \ten{\varepsilon}^{\bf p}_{k})(0)=-
\frac{1}{\mu}\int_{\tau}^{\tau+\mu}\int_{0}^{s}\int_{\Omega}\ten{\chi}_k:\ten{T}^d_k \dxdt \ds.
\label{eq:64}
\end{split}\end{equation}
For conciseness let use define the function 
\begin{equation}
F(t):=\int_{\Omega}\ten{\chi}_k:\ten{T}^d_k\dx
\end{equation} 
which belongs to $L^1(0,T)$. Then applying the Fubini theorem we obtain
\begin{equation}\label{71}\begin{split}
\frac{1}{\mu}\int_{\tau}^{\tau+\mu}\int_0^s F(t) \dt\ds &=\frac{1}{\mu}\int_{\mathbb{R}^2} 
\mathbf{1}_{\{0\le t\le s\}}(t)\mathbf{1}_{\{\tau\le s\le \tau+\mu\}}(s) F(t)\dt\ds
\\
&=\int_\mathbb{R} \left( \frac{1}{\mu}\int_\mathbb{R}
\mathbf{1}_{\{0\le t\le s\}}(t)\mathbf{1}_{\{\tau\le s\le \tau+\mu\}} (s)\ds\right)F(t)\dt .
\end{split}
\end{equation}
Using the definition of function $\psi_{\mu,\tau}$ we observe that
\begin{equation}\label{psi}
\psi_{\mu,\tau}(t)=\frac{1}{\mu}\int_\mathbb{R}
\mathbf{1}_{\{0\le t\le s\}}(t)\mathbf{1}_{\{\tau\le s\le \tau+\mu\}} (s)\ds.
\end{equation}
Hence, comparing \eqref{jedna_nierownosc} and \eqref{eq:64} we obtain
\begin{equation}
\limsup_{l\rightarrow\infty}\int_{0}^{T}\int_{\Omega}\ten{G}(\tilde{\theta} + \theta_{k,l},\tilde{\ten{T}}^d + \ten{T}^d_{k,l}):\ten{T}^d_{k,l} \ \psi_{\mu,\tau}(t)\dxdt \leq
\int_{0}^{T}\int_{\Omega}\ten{\chi}_k:\ten{T}^d_k \ \psi_{\mu,\tau}(t)\dxdt .
\label{jedna_nierownosc2}
\end{equation}
To complete the proof of Lemma \ref{lm:8} let us show the following estimates \begin{equation}
\begin{split}
\limsup_{l\to\infty}&\int_{0}^{t_2}\int_{\Omega}\ten{G}(\tilde{\theta} + \theta_{k,l},\tilde{\ten{T}}^d + \ten{T}^d_{k,l}):\ten{T}^d_{k,l} \dxdt
\\
&\leq
\limsup_{l\to\infty}\int_{0}^{t_2}\int_{\Omega}\ten{G}(\tilde{\theta} + \theta_{k,l},\tilde{\ten{T}}^d + \ten{T}^d_{k,l}):(\tilde{\ten{T}}^d + \ten{T}^d_{k,l}) \dxdt
\\
&
\quad 
-\lim_{l\to\infty}\int_{0}^{t_2}\int_{\Omega}\ten{G}(\tilde{\theta} + \theta_{k,l},\tilde{\ten{T}}^d + \ten{T}^d_{k,l}):\tilde{\ten{T}}^d  \dxdt
\\
& \leq
\limsup_{l\to\infty}\int_{0}^{t_2+\mu}\int_{\Omega}\ten{G}(\tilde{\theta} + \theta_{k,l},\tilde{\ten{T}}^d + \ten{T}^d_{k,l}):(\tilde{\ten{T}}^d + \ten{T}^d_{k,l})\psi_{\mu,t_2}(t) \dxdt 
\\
&\quad 
- \lim_{l\to\infty}\int_{0}^{t_2}\int_{\Omega}\ten{G}(\tilde{\theta} + \theta_{k,l},\tilde{\ten{T}}^d + \ten{T}^d_{k,l}):\tilde{\ten{T}}^d  \dxdt
\\
& \leq
\limsup_{l\rightarrow\infty}\int_{0}^{t_2+\mu}\int_{\Omega}\ten{G}(\tilde{\theta} + \theta_{k,l},\tilde{\ten{T}}^d + \ten{T}^d_{k,l}):\ten{T}^d_{k,l} \ \psi_{\mu,t_2}(t)\dxdt
\\
& \quad 
+ \lim_{l\to\infty}\int_{0}^{t_2+\mu}\int_{\Omega}\ten{G}(\tilde{\theta} + \theta_{k,l},\tilde{\ten{T}}^d + \ten{T}^d_{k,l}):\tilde{\ten{T}}^d \ \psi_{\mu,t_2}(t)\dxdt
\\
& \quad
- \lim_{l\to\infty}\int_{0}^{t_2}\int_{\Omega}\ten{G}(\tilde{\theta} + \theta_{k,l},\tilde{\ten{T}}^d + \ten{T}^d_{k,l}):\tilde{\ten{T}}^d  \dxdt
\\
& \leq
\int_{0}^{t_2+\mu}\int_{\Omega}\ten{\chi}_k:\ten{T}^d_k  \psi_{\mu,t_2}(t)\dxdt 
\\
& \quad 
+ \lim_{l\to\infty}
\int_{t_2}^{t_2+\mu}\int_{\Omega}\ten{G}(\tilde{\theta} + \theta_{k,l},\tilde{\ten{T}}^d + \ten{T}^d_{k,l}):\tilde{\ten{T}}^d  \psi_{\mu,t_2}(t)\dxdt\\
&=\int_{0}^{t_2+\mu}\int_{\Omega}\ten{\chi}_k:\ten{T}^d_k \ \psi_{\mu,t_2}(t) \dxdt
+\lim_{l\to\infty}\int_{t_2}^{t_2+\mu}\int_{\Omega}
\ten{G}(\tilde{\theta} + \theta_{k,l},\tilde{\ten{T}}^d + \ten{T}^d_{k,l}):\tilde{\ten{T}}^d_k\dxdt
\end{split}\end{equation}
Passing with $\mu\to0$ yields \eqref{teza-8}. The proof is complete.
\end{proof}

{\bf Step 2.} {\it Minty-Browder trick.}

We use the Minty-Browder trick to identify the weak limits ~$\ten{\chi}_k$. For $s\in(0,T]$ let us define $Q^s=\Omega\times(0,s)$. From the monotonicity of function $\ten{G}(\theta,\cdot)$ we obtain
\begin{equation}
\begin{split}
\int_{Q^s}\left(\ten{G}(\tilde{\theta} + \theta_{k,l},\tilde{\ten{T}}^d + \ten{T}^d_{k,l})  - \ten{G}(\tilde{\theta} + \theta_{k,l},\tilde{\ten{T}}^d + \ten{W}^d)\right) : (&\ten{T}_{k,l}^d - \ten{W}^d) \dxdt\geq 0 
\\
&
\forall\ \ten{W}^d\in L^{\infty}(Q,\mathcal{S}^3_d).
\label{eq:zal_G1}
\end{split}
\end{equation}
Repeating the procedure from the proof of Lemma \ref{lm:8} we obtain
\begin{equation}
\limsup_{n\to \infty}\int_{Q^s} \ten{G}(\tilde{\theta} + \theta_{k,l},\tilde{\ten{T}}^d + \ten{T}^d_{k,l}):\ten{T}_{k,l}^d \dxdt
 \leq
\int_{Q^s} \ten{\chi}_k:\ten{T}_{k}^d \dxdt .
\end{equation}
Moreover, using \eqref{eq:44} 
we get 
\begin{equation}
\lim_{n\to\infty} \int_{Q^s} \ten{G}(\tilde{\theta} + \theta_{k,l},\tilde{\ten{T}}^d + \ten{T}^d_{k,l}):\ten{W}^d \dxdt
=
\int_{Q^s} \ten{\chi}_k:\ten{W}^d \dxdt .
\end{equation}
Pointwise convergence of $\{\theta_{k,l}\}$ implies the pointwise convergence of $\{\ten{G}(\tilde{\theta} + \theta_{k,l},\tilde{\ten{T}}^d + \ten{W}^d)\}$. Furthermore, from the Assumption \ref{ass_G} and non-negativity of $N$-functions we get
\begin{equation}
|\tilde{\ten{T}}^d + \ten{W}^d|\geq c\frac{M^*(x, \ten{G}(\tilde{\theta} + \theta_{k,l},\tilde{\ten{T}}^d + \ten{W}^d))}{|\ten{G}(\tilde{\theta} + \theta_{k,l},\tilde{\ten{T}}^d + \ten{W}^d)|}.
\end{equation}
Since $\tilde{\ten{T}}^d + \ten{W}^d$ belongs to $L^{\infty}(Q,\mathcal{S}^3_d)$ and $M^*$ is an $N$-function then the sequence $\{\ten{G}(\tilde{\theta} + \theta_{k,l},\tilde{\ten{T}}^d + \ten{W}^d)\}$ belongs to $L^{\infty}(Q,\mathcal{S}^3_d)$. By Lemma \ref{lm:16} we obtain
\begin{equation}
\ten{G}(\tilde{\theta} + \theta_{k,l},\tilde{\ten{T}}^d + \ten{W}^d) \xrightarrow{M^*} 
\ten{G}(\tilde{\theta} + \theta_{k},\tilde{\ten{T}}^d + \ten{W}^d),
\end{equation}
modularly in $L_{M^*}(Q)$.
Then 
\begin{equation}
\begin{split}
\int_Q | \ten{G}(\tilde{\theta} + \theta_{k,l},&\tilde{\ten{T}}^d + \ten{W}^d):(\ten{T}^d_{k,l} - \ten{W}) - \ten{G}(\tilde{\theta} + \theta_{k},\tilde{\ten{T}}^d + \ten{W}^d):(\ten{T}^d_{k} - \ten{W}^d)| \dxdt 
\\
& \leq
\int_Q | (\ten{G}(\tilde{\theta} + \theta_{k,l},\tilde{\ten{T}}^d + \ten{W}^d) - \ten{G}(\tilde{\theta} + \theta_{k},\tilde{\ten{T}}^d + \ten{W}^d)):(\ten{T}^d_{k,l} - \ten{W}^d)| \dxdt 
\\
& \quad +
\int_Q | \ten{G}(\tilde{\theta} + \theta_{k},\tilde{\ten{T}}^d + \ten{W}^d):(\ten{T}^d_{k,l} - \ten{T}^d_{k} )| \dxdt 
\end{split}
\label{eq:dod}
\end{equation}
Using H\"{o}lder inequality (Lemma \ref{lm:Cauchy}) we get
\begin{equation}
\begin{split}
\int_Q | \ten{G}(\tilde{\theta} + \theta_{k,l},&\tilde{\ten{T}}^d + \ten{W}^d):(\ten{T}^d_{k,l} - \ten{W}) - \ten{G}(\tilde{\theta} + \theta_{k},\tilde{\ten{T}}^d + \ten{W}^d):(\ten{T}^d_{k} - \ten{W}^d)| \dxdt 
\\
& \leq
2\| \ten{G}(\tilde{\theta} + \theta_{k,l},\tilde{\ten{T}}^d + \ten{W}^d) - \ten{G}(\tilde{\theta} + \theta_{k},\tilde{\ten{T}}^d + \ten{W}^d)\|_{L,M^*}
\|\ten{T}^d_{k,l} - \ten{W}^d\|_{L,M} 
\\
& \quad +
\int_Q | \ten{G}(\tilde{\theta} + \theta_{k},\tilde{\ten{T}}^d + \ten{W}^d):(\ten{T}^d_{k,l} - \ten{T}^d_{k} )| \dxdt 
\end{split}
\label{eq:dod2}
\end{equation}
Since $\|\ten{T}^d_{k,l} - \ten{W}^d\|_{L,M} $ is uniformly bounded, $\|\ten{G}(\tilde{\theta} + \theta_{k,l},\tilde{\ten{T}}^d + \ten{W}^d) -\ten{G}(\tilde{\theta} + \theta_{k},\tilde{\ten{T}}^d + \ten{W}^d) \|_{L,M^*}\to 0$ and $\ten{T}^d_{k,l} -\ten{T}^d_k \rightharpoonup 0 $ in $L_M(Q,\mathcal{S}^3_d)$ as $l$ goes to $\infty$, then the right hand side of \eqref{eq:dod2} goes to zero as $l$ goes to $\infty$. Hence
\begin{equation}
\begin{split}
\lim_{l\to\infty} \int_{Q^s} \ten{G}(\tilde{\theta} + \theta_{k,l},\tilde{\ten{T}}^d + \ten{W}^d) :(\ten{T}_{k,l}^d - \ten{W}^d) \dxdt
=
\int_{Q^s} \ten{G}(\tilde{\theta} + \theta_{k},\tilde{\ten{T}}^d + \ten{W}^d) :(\ten{T}_{k}^d - \ten{W}^d) \dxdt
\end{split}
\label{eq:44_2}
\end{equation}

Summing up, passing to the limit with $l\to \infty$ in \eqref{eq:zal_G1}, we get
\begin{equation}
\int_{Q^s} \left(\ten{\chi}_k -\ten{G}(\tilde{\theta} + \theta_{k},\tilde{\ten{T}}^d + \ten{W}^d)\right):( \ten{T}_k^d - \ten{W}^d) \dxdt \geq 0
\qquad \forall \ten{W}^d\in L^{\infty}(Q^s,\mathcal{S}^3_d).
\label{eq:test_fun0}
\end{equation}
For $i>0$ let us define the set
\begin{equation}
Q_i = \{(t,x)\in Q^s: \ |\ten{T}_{k}^d|\leq i \mbox{ a.e. in } Q^s\}.
\end{equation}
Then for $0<j<i$ and for arbitrary $h>0$ we define the function
\begin{equation}
\ten{W}^d = - \tilde{\ten{T}}^d \vc{1}_{Q^s\setminus Q_i} + \ten{T}_k^d \vc{1}_{Q_i} + h \ten{U}^d \vc{1}_{Q_j}
\label{eq:test_fun}
\end{equation}
where $\ten{U}^d\in L^{\infty}(Q,\mathcal{S}^3_d)$ and $\vc{1}_H$ is a characteristic function of set $H$. Using the function defined in \eqref{eq:test_fun} as a test function in \eqref{eq:test_fun0} we obtain
\begin{equation}
\begin{split}
\int_{Q^s} &\left(\ten{\chi}_k -\ten{G}(\tilde{\theta} + \theta_{k},\tilde{\ten{T}}^d - \tilde{\ten{T}}^d \vc{1}_{Q^s\setminus Q_i} + \ten{T}_k^d \vc{1}_{Q_i} + h \ten{U}^d \vc{1}_{Q_j}) \right) : 
\\
& ( \ten{T}_k^d + \tilde{\ten{T}}^d \vc{1}_{Q^s\setminus Q_i} - \ten{T}_k^d \vc{1}_{Q_i} - h \ten{U}^d \vc{1}_{Q_j}) \dxdt \geq 0
\end{split}
\end{equation}
Since $Q_j\subset Q_i \subset Q^s$ we get
\begin{equation}
\begin{split}
-h\int_{Q_j} &\left(\ten{\chi}_k -\ten{G}(\tilde{\theta} + \theta_{k},\tilde{\ten{T}}^d + \ten{T}_k^d + h \ten{U}^d )\right): \ten{U}^d \dxdt
\\
+\int_{Q_i \setminus Q_j} & \left(\ten{\chi}_k -\ten{G}(\tilde{\theta} + \theta_{k},\tilde{\ten{T}}^d + \ten{T}_k^d  ) \right) : 
( \ten{T}_k^d - \ten{T}_k^d ) \dxdt
\\
&+\int_{Q^s\setminus Q_i} \left(\ten{\chi}_k -\ten{G}(\tilde{\theta} + \theta_{k},\ten{0}) \right): (\tilde{\ten{T}}^d + \ten{T}^d_k ) \dxdt \geq 0.
\end{split}
\end{equation}
Using 
Assumption \ref{ass_G} we obtain
\begin{equation}
M^*(x,\ten{G}(\tilde{\theta} + \theta_{k},\ten{0}))=0
\end{equation}
and then, using Definition \ref{df:Nfunction}, we get that $\ten{G}(\tilde{\theta} + \theta_{k},\ten{0})=0$. Hence
\begin{equation}
\begin{split}
-h\int_{Q_j} &\left(\ten{\chi}_k -\ten{G}(\tilde{\theta} + \theta_{k},\tilde{\ten{T}}^d + \ten{T}_k^d + h \ten{U}^d )\right): \ten{U}^d \dxdt 
+\int_{Q^s\setminus Q_i} \ten{\chi}_k : (\tilde{\ten{T}}^d + \ten{T}^d_k ) \dxdt \geq 0.
\end{split}
\label{eq:MBT_lim}
\end{equation}
Moreover, from the definition of characteristic function
\begin{equation}
\int_{Q^s\setminus Q_i} \ten{\chi}_k : (\ten{T}^d_k + \tilde{\ten{T}}^d ) \dxdt = 
\int_{Q} \left(\ten{\chi}_k : (\tilde{\ten{T}}^d + \ten{T}^d_k)\right)\ten{1}_{Q^s\setminus Q_i} \dxdt.
\end{equation}
Since $\int_{Q} \ten{\chi}_k : (\tilde{\ten{T}}^d + \ten{T}^d_k )<\infty$ and $\left(\ten{\chi}_k : (\tilde{\ten{T}}^d + \ten{T}^d_k )\right)\ten{1}_{Q^s\setminus Q_i} \to 0$ a.e. in $Q$ as $i$ goes to $\infty$, the Lebesgue dominated convergence theorem implies that 
\begin{equation}
\lim_{i\to\infty}\int_{Q^s\setminus Q_i} \ten{\chi}_k : (\tilde{\ten{T}}^d + \ten{T}^d_k ) \dxdt  = 0.
\end{equation}
Passing to the limit with $i$ going to $\infty$ in \eqref{eq:MBT_lim} and dividing by $h$ we obtain
\begin{equation}
\int_{Q_j} \left(\ten{\chi}_k -\ten{G}(\tilde{\theta} + \theta_{k},\tilde{\ten{T}}^d + \ten{T}_k^d + h \ten{U}^d )\right): \ten{U}^d \dxdt \leq 0.
\label{qe:MBT_gr}
\end{equation}
Since $\tilde{\ten{T}}^d + \ten{T}_k^d + h \ten{U}^d$ goes to $\tilde{\ten{T}}^d + \ten{T}_k^d$ a.e. in $Q$ when $h\to 0^+$, $\{\ten{G}(\tilde{\theta} + \theta_{k},\tilde{\ten{T}}^d + \ten{T}_k^d + h \ten{U}^d )\}_{h>0}$ is uniformly bounded in $L_{M^*}(Q_j,\mathcal{S}^3)$, we conclude that 
\begin{equation}
\ten{G}(\tilde{\theta} + \theta_{k},\tilde{\ten{T}}^d + \ten{T}_k^d + h \ten{U}^d ) \rightharpoonup^*
\ten{G}(\tilde{\theta} + \theta_{k},\tilde{\ten{T}}^d + \ten{T}_k^d )
\end{equation}
in $L_{M^*}(Q_j,\mathcal{S}^3)$ as $h$ goes to $0^+$. Consequently, passing to the limit with $h$ going to $\infty$ in \eqref{qe:MBT_gr} we obtain
\begin{equation}
\int_{Q_j} \left(\ten{\chi}_k -\ten{G}(\tilde{\theta} + \theta_{k},\tilde{\ten{T}}^d + \ten{T}_k^d )\right): \ten{U}^d \dxdt \leq 0,
\end{equation}
for all $\ten{U}^d\in L^{\infty}(Q,\mathcal{S}^3_d)$, so taking 
\begin{equation}
\ten{U}^d =
\left\{
\begin{array}{ll}
\frac{\ten{\chi}_k -\ten{G}(\tilde{\theta} + \theta_{k},\tilde{\ten{T}}^d + \ten{T}_k^d )}{|\ten{\chi}_k -\ten{G}(\tilde{\theta} + \theta_{k},\tilde{\ten{T}}^d + \ten{T}_k^d )|} & \mbox{ when } \ten{\chi}_k \neq \ten{G}(\tilde{\theta} + \theta_{k},\tilde{\ten{T}}^d + \ten{T}_k^d ),
\\
0 & \mbox{ when } \ten{\chi}_k =\ten{G}(\tilde{\theta} + \theta_{k},\tilde{\ten{T}}^d + \ten{T}_k^d ),
\end{array}\right.
\end{equation}
we obtain
\begin{equation}
\int_{Q_j} |\ten{\chi}_k -\ten{G}(\tilde{\theta} + \theta_{k},\tilde{\ten{T}}^d + \ten{T}_k^d )| \dxdt \leq 0,
\end{equation}
i.e. $\ten{\chi}_k = \ten{G}(\tilde{\theta} + \theta_{k},\tilde{\ten{T}}^d + \ten{T}_k^d )$ a.e. in $Q^s$. From the arbitrary choices of $j>0$ and $0\leq s\leq T$ we get $\ten{\chi}_k = \ten{G}(\tilde{\theta} + \theta_{k},\tilde{\ten{T}}^d + \ten{T}_k^d )$ a.e. in $Q$.

{\bf Step 3.} {\it Limit of the right hand side of the heat equations.}

The idea for third step came from paper of Gwiazda et al.\cite{GWWKZ}. Let us start from the formulation of auxiliary lemmas which may be found with the proof in \cite{GWWKZ}. We denote by $\xrightarrow{b} $ the biting limit used in Chacon's, cf. \cite{ballmurat}. 

\begin{defi}{Biting limit}

Let $\{f^{\nu}\}$ be a bounded sequence in $L^1(Q)$. We say that $f\in L^1(Q)$ is a biting limit of subsequence $\{f^{\nu}\}$ if there exist nonincreasing sequence $\{E_k\}$ with $E_k\subset Q$ and $\lim_{k\to\infty} |E_k| =0$, such that $f^{\nu}$ convergence weakly to $f$ in $L^1(Q\setminus E_k)$ for fixed $k$.
\end{defi}

\begin{lemat}
Let $a_n\in L^1(Q)$ and let $0\leq a_0\in L^1(Q)$ and
\begin{equation}
a_n \geq -a_0, 
\quad 
a_n \xrightarrow{b} a
\quad \mbox{and}
\quad
\limsup_{n\to\infty}\int_Q a_n \dxdt \leq \int_Q a\dxdt
\end{equation}
then
\begin{equation}
a_n\rightharpoonup a \quad 
\mbox{weakly in } L^1(Q).
\end{equation}
\label{lm:10}
\end{lemat}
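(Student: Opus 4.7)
The plan is to reduce to the non-negative case, upgrade the $\limsup$ bound to convergence of $L^1$ norms, and then deduce weak $L^1$ convergence by splitting integrals across the biting sets $E_k$.

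First, I would set $b_n:=a_n+a_0$ and $b:=a+a_0$. Since $a_n\ge -a_0$, we have $b_n\ge 0$; biting convergence is preserved under adding a fixed $L^1$ function, so $b_n\xrightarrow{b} b$ with some nonincreasing sets $\{E_k\}$, $|E_k|\to 0$, and $b_n\rightharpoonup b$ weakly in $L^1(Q\setminus E_k)$ for each fixed $k$. Moreover $\limsup_n\int_Q b_n\le\int_Q b$. This reduction isolates the difficulty: the signed case cannot be handled directly because weak $L^1$ testing would not be sign-definite, but once $b_n\ge 0$ Fatou-type inequalities become available.

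Second, I would upgrade the bound on integrals. Testing the weak convergence on $Q\setminus E_k$ against the constant $1\in L^\infty$ gives $\int_{Q\setminus E_k} b_n\to \int_{Q\setminus E_k} b$ as $n\to\infty$, for each fixed $k$. Because $b_n\ge 0$, we have $\int_Q b_n\ge \int_{Q\setminus E_k} b_n$, hence
\begin{equation}
\liminf_{n\to\infty}\int_Q b_n\,\dxdt\ge \int_{Q\setminus E_k} b\,\dxdt,
\end{equation}
and letting $k\to\infty$ (using $b\in L^1(Q)$ and $|E_k|\to 0$) yields $\liminf_n\int_Q b_n\ge\int_Q b$. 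Combined with the given $\limsup$ bound this gives $\lim_n\int_Q b_n=\int_Q b$, which is the crucial upgrade.

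Third, I would prove $b_n\rightharpoonup b$ weakly in $L^1(Q)$. For a nonnegative $\phi\in L^\infty(Q)$, split
\begin{equation}
\int_Q b_n\phi\,\dxdt=\int_{Q\setminus E_k} b_n\phi\,\dxdt+\int_{E_k} b_n\phi\,\dxdt.
\end{equation}
The first term converges to $\int_{Q\setminus E_k} b\phi\,\dxdt$ by the biting definition. For the second, $\int_{E_k} b_n=\int_Q b_n-\int_{Q\setminus E_k} b_n\to\int_{E_k} b$ by the previous step, so $\limsup_n\int_{E_k} b_n\phi\le\|\phi\|_\infty\int_{E_k} b$, and an analogous lower bound follows from $b_n\phi\ge 0$. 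Letting $k\to\infty$ controls the error to zero, giving $\int_Q b_n\phi\to\int_Q b\phi$; linearity extends this to arbitrary $\phi\in L^\infty(Q)$. Finally, subtracting the fixed function $a_0$ transfers the weak convergence from $b_n$ back to $a_n$.

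The main obstacle is the second step: biting convergence alone is strictly weaker than weak $L^1$ convergence precisely because mass can concentrate on shrinking sets, and only the combination of the reverse Fatou-type inequality $\limsup\int a_n\le\int a$ with the non-negativity obtained after shifting forces this concentration to vanish. Everything else is bookkeeping around the splitting $Q=(Q\setminus E_k)\cup E_k$.
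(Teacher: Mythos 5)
Your proof is correct. Note that the paper itself does not prove Lemma \ref{lm:10}: it is quoted as an auxiliary result and the proof is delegated to the reference \cite{GWWKZ}, so there is no in-paper argument to compare against. Your route is the standard one for this lemma (and essentially the one used in the literature): shift by $a_0$ to reduce to nonnegative functions $b_n=a_n+a_0$, test the biting convergence against $\mathbf{1}_{Q\setminus E_k}$ and combine with the hypothesis $\limsup_n\int_Q a_n\le\int_Q a$ to get $\lim_n\int_Q b_n=\int_Q b$ (no mass escapes to the exceptional sets), and then split $\int_Q b_n\phi$ over $Q\setminus E_k$ and $E_k$, controlling the latter by $\|\phi\|_{\infty}\int_{E_k}b_n\to\|\phi\|_{\infty}\int_{E_k}b\to 0$. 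All the small supporting facts you use are available: $b\in L^1(Q)$ by the definition of the biting limit, $b\ge 0$ a.e.\ since it is a weak limit of nonnegative functions on each $Q\setminus E_k$ and $|E_k|\to 0$, and $\int_{E_k}b\to 0$ by absolute continuity of the integral; the reduction from signed $\phi$ to $\phi\ge 0$ and the final transfer from $b_n$ back to $a_n$ are immediate. So the argument is a complete, self-contained substitute for the cited proof.
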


\begin{lemat}
For each $k\in\mathbb{N}$ the sequence $\{\ten{G}(\tilde{\theta} + \theta_{k,l},\tilde{\ten{T}}^d + \ten{T}_{k,l}^d):(\tilde{\ten{T}}^d + \ten{T}_{k,l}^d )\}_{l=1}^{\infty}$ converges weakly to $\ten{G}(\tilde{\theta} + \theta_k,\tilde{\ten{T}}^d + \ten{T}_k^d):(\tilde{\ten{T}}^d + \ten{T}_k^d)$
in $L^1(Q)$.
\end{lemat}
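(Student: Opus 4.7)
The plan is to invoke Lemma~\ref{lm:10} applied to
\begin{equation*}
a_l:=\ten{G}(\tilde\theta+\theta_{k,l},\tilde{\ten{T}}^d+\ten{T}^d_{k,l}):(\tilde{\ten{T}}^d+\ten{T}^d_{k,l})
\end{equation*}
with the candidate limit
\begin{equation*}
a:=\ten{\chi}_k:(\tilde{\ten{T}}^d+\ten{T}^d_k)=\ten{G}(\tilde\theta+\theta_k,\tilde{\ten{T}}^d+\ten{T}^d_k):(\tilde{\ten{T}}^d+\ten{T}^d_k),
\end{equation*}
using the identification $\ten{\chi}_k=\ten{G}(\tilde\theta+\theta_k,\tilde{\ten{T}}^d+\ten{T}^d_k)$ already established in Step~2.

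First, the lower bound $a_l\ge 0$ is immediate from Assumption~\ref{ass_G}(b), so we may set $a_0\equiv 0$. Uniform $L^1(Q)$-boundedness is furnished by Remark~\ref{wsp_ogr_T}, so Chacon's biting lemma produces a (non-relabelled) subsequence and some $\bar a\in L^1(Q)$ with $a_l\xrightarrow{b}\bar a$. For the limsup inequality we decompose
\begin{equation*}
\int_Q a_l\,\dxdt=\int_Q \ten{G}(\cdots):\tilde{\ten{T}}^d\,\dxdt+\int_Q\ten{G}(\cdots):\ten{T}^d_{k,l}\,\dxdt.
\end{equation*}
Since $\tilde{\ten{T}}^d\in L^\infty(Q)\subset E_M(Q)$ by Lemma~\ref{wyrzucenie_war_brzeg}, the weak-$*$ convergence $\ten{G}(\cdots)\rightharpoonup^*\ten{\chi}_k$ in $L_{M^*}(Q)$ from \eqref{eq:44} gives convergence of the first integral to $\int_Q\ten{\chi}_k:\tilde{\ten{T}}^d\,\dxdt$, while Lemma~\ref{lm:8} bounds the limsup of the second by $\int_Q\ten{\chi}_k:\ten{T}^d_k\,\dxdt$; summing yields $\limsup_l\int_Q a_l\dxdt\le\int_Q a\dxdt$.

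The principal difficulty is identifying the biting limit $\bar a=a$ almost everywhere. The plan is to rerun the Minty--Browder argument of Step~2 on each subdomain $Q\setminus E_j$, where biting becomes genuine weak $L^1$-convergence. Starting from the monotonicity inequality (Assumption~\ref{ass_G}(a)) with a test tensor $\ten W^d\in L^\infty(Q,\mathcal{S}^3_d)$,
\begin{equation*}
\int_{Q\setminus E_j}\bigl(\ten G(\tilde\theta+\theta_{k,l},\tilde{\ten T}^d+\ten T^d_{k,l})-\ten G(\tilde\theta+\theta_{k,l},\ten W^d)\bigr):(\tilde{\ten T}^d+\ten T^d_{k,l}-\ten W^d)\,\dxdt\ge 0,
\end{equation*}
we pass to the limit $l\to\infty$. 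The bilinear piece $\int_{Q\setminus E_j} a_l$ converges to $\int_{Q\setminus E_j}\bar a$ by the biting-weak $L^1$ convergence; the pieces linear in $\ten G(\cdots)$ converge by weak-$*$ convergence against $L^\infty(Q)\subset E_M(Q)$; and the pieces of the form $\ten G(\tilde\theta+\theta_{k,l},\ten W^d)$ converge because of continuity of $\ten G$, pointwise convergence $\theta_{k,l}\to\theta_k$, and the $\Delta_2$-condition on $M^*$ (providing the required modular/uniform integrability for the bounded argument $\ten W^d$). Choosing $\ten W^d=\tilde{\ten{T}}^d+\ten T^d_k+h\ten U^d$ truncated on $Q_i=\{|\ten T^d_k|\le i\}$, dividing by $h$, and sending $h\to 0^\pm$ and then $i,j\to\infty$ exactly as in Step~2 forces $\bar a=a$ a.e.\ in $Q$. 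Lemma~\ref{lm:10} then yields $a_l\rightharpoonup a$ weakly in $L^1(Q)$ along the subsequence, and the standard subsequence-of-subsequences argument upgrades this to weak $L^1$-convergence of the full sequence, since every subsequence admits a further one with the same limit $a$.
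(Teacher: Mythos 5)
Your overall framework coincides with the paper's: both arguments reduce the claim to Lemma~\ref{lm:10}, using the nonnegativity of $a_l$ from Assumption~\ref{ass_G}(b), the uniform $L^1(Q)$ bound of Remark~\ref{wsp_ogr_T}, and the limsup hypothesis obtained from Lemma~\ref{lm:8} together with the weak-$*$ pairing of $\ten{G}(\tilde\theta+\theta_{k,l},\tilde{\ten{T}}^d+\ten{T}^d_{k,l})$ against $\tilde{\ten{T}}^d\in L^{\infty}(Q)$. Where you diverge is the identification of the biting limit $\bar a$: the paper does this with Young measures — the monotonicity defect \eqref{eq:92} generates a measure of the form $\delta_{\tilde\theta+\theta_k}\otimes\nu_{x,t}$ by Lemma~\ref{lm:charakteryzacja}, its biting limit is computed through the barycenter property, and it is shown to vanish by combining the Fatou-type Lemma~\ref{lm:ineq} with \eqref{eq:dod3}, which again rests on Lemma~\ref{lm:8} — whereas you propose to rerun the Minty--Browder argument on the sets $Q\setminus E_j$ where biting convergence is genuine weak $L^1$ convergence.

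The gap lies in your final assertion that doing this ``exactly as in Step~2 forces $\bar a=a$ a.e.''. Monotonicity can only ever produce a one-sided bound. Passing to the limit in the monotonicity inequality on $Q\setminus E_j$ and then letting $h\to 0$ and $i\to\infty$ yields $\int_{Q\setminus E_j}\bar a \ge \int_{Q\setminus E_j}\ten{\chi}_k:(\tilde{\ten{T}}^d+\ten{T}^d_k)$ (and, after localizing to arbitrary measurable subsets, $\bar a\ge a$ a.e.), but nothing more: the ``divide by $h$'' step of Step~2 gives no information here, because the $O(1)$ discrepancy $\int_{Q\setminus E_j}\bar a-\int\ten{\chi}_k:(\tilde{\ten{T}}^d+\ten{T}^d_k)$ does not cancel the way it did in Step~2 (where the test tensor equals $\ten{T}^d_k$ exactly on $Q_i\setminus Q_j$), and dividing a fixed nonnegative number by $h$ and sending $h\to 0^{\pm}$ yields only a trivial inequality. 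The reverse bound $\bar a\le a$ cannot come from monotonicity at all; it has to be extracted from the energy estimate, for instance by combining $\bar a\ge a$ a.e.\ with $\int_{Q\setminus E_j}\bar a=\lim_l\int_{Q\setminus E_j}a_l\le\limsup_l\int_Q a_l\le\int_Q a$ (which uses $a_l\ge 0$ on $E_j$) and then letting $j\to\infty$ with monotone convergence, which is the role played in the paper by \eqref{eq:dod3}. With this supplement your route closes and is a legitimate, somewhat more elementary alternative to the Young-measure computation; as written, however, the identification of the biting limit — the heart of the lemma — is not established.
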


\begin{proof}
To characterized the limit of right hand side we use the same argumentation as in \cite{GWWKZ}. Using the Assumption \ref{ass_G}, Frechet-Young inequality and convexity of $N$-functions, we get 
\begin{equation}
\begin{split}
c(  M(x,\tilde{\ten{T}}^d + &\ten{T}^d_k) + M^*(x,\ten{G}(\tilde{\theta}+\theta_{k,l},\tilde{\ten{T}}^d + \ten{T}^d_k)))
\\
& \leq
\ten{G}(\tilde{\theta}+\theta_{k,l},\tilde{\ten{T}}^d +\ten{T}_{k}^d)) : (\tilde{\ten{T}}^d +\ten{T}_{k}^d)
\\
& \leq
M(x, \frac{2}{d}(\tilde{\ten{T}}^d +\ten{T}_{k}^d)) +
M^*(x, \frac{d}{2}\ten{G}(\tilde{\theta}+\theta_{k,l},\tilde{\ten{T}}^d +\ten{T}_{k}^d)) )
\\
& \leq
M(x, \frac{2}{d}(\tilde{\ten{T}}^d +\ten{T}_{k}^d)) +
\frac{d}{2}M^*(x, \ten{G}(\tilde{\theta}+\theta_{k,l},\tilde{\ten{T}}^d +\ten{T}_{k}^d)) ),
\end{split}
\end{equation}
where $d=\min(c,1)$. And finally
\begin{equation}
\begin{split}
c M(x,\tilde{\ten{T}}^d + \ten{T}^d_k) +\frac{2c-d}{2} M^*(x,\ten{G}(\tilde{\theta}+\theta_{k,l},\tilde{\ten{T}}^d + \ten{T}^d_k))
\leq
M(x, \frac{2}{d}(\tilde{\ten{T}}^d +\ten{T}_{k}^d)).
\end{split}
\end{equation}
Hence the sequence $\{\ten{G}(\tilde{\theta}+\theta_{k,l},\tilde{\ten{T}}^d + \ten{T}^d_k)\}_{l=1}^{\infty}$ is uniformly bounded in $L_{M^*}(Q)$. 
Using the monotonicity of function $\ten{G}(\cdot,\cdot)$  with respect to the second variable, we get
\begin{equation}
0\leq \left(\ten{G}(\tilde{\theta}+\theta_{k,l},\tilde{\ten{T}}^d +\ten{T}_{k,l}^d) - \ten{G}(\tilde{\theta}+\theta_{k,l},\tilde{\ten{T}}^d +\ten{T}_{k}^d)\right) : (\ten{T}_{k,l}^d -\ten{T}_{k}^d).
\label{eq:92}
\end{equation}
The right hand side of abovmentioned inequality is uniformly bounded in $L^1(Q)$. 
Thus, there exists a Young measure denoted by $\mu_{x,t}(\cdot,\cdot)$, see 
\cite[Theorem 3.1]{muller}, such that the following convergence holds
\begin{equation}
\begin{split}
(\ten{G}(\tilde{\theta}+&\theta_{k,l},\tilde{\ten{T}}^d +\ten{T}_{k,l}^d) - \ten{G}(\tilde{\theta}+\theta_{k,l},\tilde{\ten{T}}^d +\ten{T}_{k}^d)) : (\ten{T}_{k,l}^d -\ten{T}_{k}^d)
\\
& \xrightarrow{b}
\int_{\mathbb{R}\times\mathbb{R}^{3\times 3}}
\left(\ten{G}(s,\ten{\lambda}) - \ten{G}(s,\tilde{\ten{T}}^d +\ten{T}_{k}^d)\right) : (\ten{\lambda} - (\tilde{\ten{T}}^d +\ten{T}_{k}^d))
d\mu_{x,t}(s,\ten{\lambda}).
\end{split}
\label{eq:86}
\end{equation}
as $l\to\infty$. 
Using Lemma \ref{lm:charakteryzacja} we obtain 
that the measure $\mu_{x,t}(s,\ten{\lambda})$ can be presented in the form of $\delta_{\tilde{\theta}+\theta_{k}}\otimes\nu_{x,t}(\ten{\lambda})$. Then
\begin{equation}
\begin{split}
\int_{\mathbb{R}\times\mathbb{R}^{3\times 3}}
&\left(\ten{G}(s,\ten{\lambda}) - \ten{G}(s,\tilde{\ten{T}}^d +\ten{T}_{k}^d)\right) : (\ten{\lambda} - (\tilde{\ten{T}}^d +\ten{T}_{k}^d))
d\mu_{x,t}(s,\ten{\lambda})
\\
&=
\int_{\mathbb{R}^{3\times 3}}
\left(\ten{G}(\tilde{\theta}+\theta_k,\ten{\lambda}) - \ten{G}(\tilde{\theta}+\theta_k,\tilde{\ten{T}}^d +\ten{T}_{k}^d)\right) : (\ten{\lambda} - (\tilde{\ten{T}}^d +\ten{T}_{k}^d))
d\nu_{x,t}(\ten{\lambda})
\\
&=
\int_{\mathbb{R}^{3\times 3}}
\ten{G}(\tilde{\theta}+\theta_k,\ten{\lambda})  : (\ten{\lambda} - (\tilde{\ten{T}}^d +\ten{T}_{k}^d))d\nu_{x,t}(\ten{\lambda})
\\
& \quad
- \int_{\mathbb{R}^{3\times 3}}
\ten{G}
(\tilde{\theta}+\theta_k,\tilde{\ten{T}}^d +\ten{T}_{k}^d) : (\ten{\lambda} - (\tilde{\ten{T}}^d +\ten{T}_{k}^d))
d\nu_{x,t}(\ten{\lambda}).
\end{split}
\end{equation}
Since $ \int_{\mathbb{R}^{3\times 3}} \ten{\lambda}d\nu_{x,t}(\ten{\lambda}) =\tilde{\ten{T}}^d + \ten{T}_k^d$ a.e., the second term in abovementioned equation disappears. Indeed, 
\begin{equation}
\begin{split}
- \int_{\mathbb{R}^{3\times 3}}
\ten{G}
(\tilde{\theta}+\theta_k,&\tilde{\ten{T}}^d +\ten{T}_{k}^d) : (\ten{\lambda} - (\tilde{\ten{T}}^d +\ten{T}_{k}^d))
d\nu_{x,t}(\ten{\lambda}) 
\\
&
= - \ten{G}
(\tilde{\theta}+\theta_k,\tilde{\ten{T}}^d +\ten{T}_{k}^d) : \left(\int_{\mathbb{R}^{3\times 3}} \ten{\lambda}d\nu_{x,t}(\ten{\lambda}) - (\tilde{\ten{T}}^d +\ten{T}_{k}^d)\right).
\end{split}
\end{equation}

Moreover, the uniform boundedness of the sequence $\{\ten{G}(\tilde{\theta}+\theta_{k,l},\tilde{\ten{T}}^d +\ten{T}_{k,l}^d):(\tilde{\ten{T}}^d +\ten{T}_{k,l}^d)\}_{l=1}^{\infty}$ in $L^1(Q)$ implies that
\begin{equation}
\begin{split}
\ten{G}(\tilde{\theta}+\theta_{k,l},\tilde{\ten{T}}^d +\ten{T}_{k,l}^d):(\tilde{\ten{T}}^d +\ten{T}_{k,l}^d)
& \xrightarrow{b}
\int_{\mathbb{R}\times \mathbb{R}^{3\times 3}}
\ten{G}(s,\ten{\lambda}):\ten{\lambda} d\mu_{x,t}(s,\ten{\lambda})
\\
& =
\int_{\mathbb{R}^{3\times 3}}
\ten{G}(\tilde{\theta}+\theta_{k},\ten{\lambda}):\ten{\lambda} d\nu_{x,t}(\ten{\lambda})
\end{split}
\end{equation}
Hence, by the positivity of $\ten{G}(\tilde{\theta}+\cdot,\tilde{\ten{T}}^d +\cdot):(\tilde{\ten{T}}^d + \cdot)$ and using Lemma \ref{lm:ineq} we get
\begin{equation}
\begin{split}
\liminf_{l\to \infty} \int_Q \ten{G}(\tilde{\theta} + \theta_{k,l},\tilde{\ten{T}}^d + \ten{T}_{k,l}^d):(\tilde{\ten{T}}^d + \ten{T}_{k,l}^d) \dxdt
 \geq \int_Q \int_{\mathbb{R}^{3\times 3}}\ten{G}(\tilde{\theta} + \theta_{k},\ten{\lambda} ):\ten{\lambda} d\nu_{x,t}(\ten{\lambda})\dxdt .
\end{split}
\end{equation}
Using Lemma \ref{lm:8} and knowing that $\ten{\chi}_k = \ten{G}(\tilde{\theta} + \theta_{k},\tilde{\ten{T}}^d + \ten{T}_k^d )$ a.e. in $Q$, we get 
\begin{equation}
\begin{split}
\int_{Q}\ten{G}(\tilde{\theta} + \theta_{k},\tilde{\ten{T}} + \ten{T}_{k}):(\tilde{\ten{T}} + \ten{T}_{k}) \dxdt
 \geq \int_Q \int_{\mathbb{R}^{3\times 3}}\ten{G}(\tilde{\theta} + \theta_{k},\ten{\lambda} ):\ten{\lambda} d\nu_{x,t}(\ten{\lambda})\dxdt .
\end{split}
\label{eq:dod3}
\end{equation}
Since $\ten{G}(\tilde{\theta} + \theta_{k},\tilde{\ten{T}}^d + \ten{T}_{k}^d) = \int_{\mathbb{R}^{3\times 3}}\ten{G}(\tilde{\theta} + \theta_{k},\ten{\lambda} ) d\nu_{x,t}(\ten{\lambda}) $ and \eqref{eq:dod3} holds we obtain that the right hand side of \eqref{eq:92} is not positive. Hence 
\begin{equation}
\begin{split}
\left(\ten{G}(\tilde{\theta}+\theta_{k,l},\tilde{\ten{T}}^d +\ten{T}_{k,l}^d) - \ten{G}(\tilde{\theta}+\theta_{k,l},\tilde{\ten{T}}^d +\ten{T}_{k}^d)\right) : (\ten{T}_{k,l}^d -\ten{T}_{k}^d)
 \xrightarrow{b} 0.
\end{split}
\end{equation}
%
%
Using again biting limit we get
\begin{equation}
\ten{G}(\tilde{\theta}+\theta_{k,l},\tilde{\ten{T}}^d +\ten{T}_{k}^d)) : (\ten{T}_{k,l}^d -\ten{T}_{k}^d) 
 \xrightarrow{b} 0.
\end{equation}
Hence 
\begin{equation}
\begin{split}
\ten{G}(\tilde{\theta}+\theta_{k,l},\tilde{\ten{T}}^d +\ten{T}_{k,l}^d): (\ten{T}_{k}^d +\tilde{\ten{T}}^d)
 \xrightarrow{b} 
 \ten{G}(\tilde{\theta}+\theta_{k},\tilde{\ten{T}}^d +\ten{T}_{k}^d): (\ten{T}_{k}^d +\tilde{\ten{T}}^d).
\end{split}
\end{equation}
We use Lemma \ref{lm:10} to complete the proof.
\end{proof}

Now, we can pass to the limit with $l\to\infty$ in \eqref{app_system_0T} with $\varphi_4(t)\in C^{\infty}([0,T]\times\Omega)$.

\begin{equation}
\begin{split}
-\int_0^T\int_{\Omega}\theta_{k} (\varphi_4(t))_t\dxdt 
&- \int_{\Omega}\theta_{k}(x,0) \varphi_4(x,0)\dx
+ \int_0^T\int_{\Omega}\nabla \theta_{k}\cdot\nabla v_m \varphi_4(t)\dxdt 
\\
= \int_0^T\int_{\Omega} \mathcal{T}_k(&(\tilde{\ten{T}}^d + \ten{T}_{k}^d ):  \ten{G}(\theta_{k} + \tilde{\theta}, \tilde{\ten{T}}^d + \ten{T}^d_{k} ) ) v_m \varphi_4(t)\dxdt 
\end{split}
\label{app_system_0T2po}
\end{equation}

We finish this section with two lemmas. We prove the uniform boundedness of the sequences $\{\ten{\varepsilon}^{\bf p}_{k}\}$ and $\{\vc{u}_k\}$ in proper spaces. This allows us to make the limit passage with second parameter in the next section.

\begin{lemat}
The sequence $\{\ten{\varepsilon}^{\bf p}_{k}\}$ is uniformly bounded in $L_{M^*}(Q,\mathcal{S}^3_d)$. Moreover, the sequence $\{(\ten{\varepsilon}^{\bf p}_{k})_t\}$ is also uniformly bounded in $L_{M^*}(Q,\mathcal{S}^3_d)$.
\label{wsp_org_ep}
\end{lemat}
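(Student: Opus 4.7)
The strategy is to obtain the bound for $\{(\ten{\varepsilon}^{\bf p}_{k})_t\}$ first (inheriting it from the uniform control of $\{\ten{G}(\tilde{\theta}+\theta_{k,l},\tilde{\ten{T}}^d + \ten{T}^d_{k,l})\}$ via the Minty-Browder identification), and then deduce the bound on $\{\ten{\varepsilon}^{\bf p}_k\}$ by integrating in time and exploiting the $\Delta_2$-condition on $M^*$.

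\textbf{Step 1 (bound on the time derivative).} Combining \eqref{app_system_0T2_2}, its extension to test functions in $L_M(Q,\mathcal{S}^3)$ (mentioned in the text following \eqref{app_system_0T2_2}) and the identification $\ten{\chi}_k=\ten{G}(\tilde{\theta}+\theta_k,\tilde{\ten{T}}^d+\ten{T}^d_k)$ a.e.\ in $Q$ obtained in Step~2 (Minty-Browder trick), one has $(\ten{\varepsilon}^{\bf p}_k)_t=\ten{G}(\tilde{\theta}+\theta_k,\tilde{\ten{T}}^d+\ten{T}^d_k)$ almost everywhere in $Q$. By Lemma~\ref{pom_2}, the sequence $\{\ten{G}(\tilde{\theta}+\theta_{k,l},\tilde{\ten{T}}^d+\ten{T}^d_{k,l})\}_{l}$ satisfies a modular bound in $L_{M^*}(Q,\mathcal{S}^3_d)$ uniform in $k$ and $l$. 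By the convergence \eqref{eq:44}$_{(3)}$ and the weak-$*$ lower semicontinuity of the modular, the limit $\ten{\chi}_k$ satisfies
\begin{equation}
\int_Q M^*(x,\ten{\chi}_k)\dxdt \;\leq\; \liminf_{l\to\infty}\int_Q M^*(x,\ten{G}(\tilde{\theta}+\theta_{k,l},\tilde{\ten{T}}^d+\ten{T}^d_{k,l}))\dxdt \;\leq\; C,
\end{equation}
with $C$ independent of $k$. This yields the desired uniform estimate for $\{(\ten{\varepsilon}^{\bf p}_k)_t\}$ in $L_{M^*}(Q,\mathcal{S}^3_d)$.

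\textbf{Step 2 (bound on $\ten{\varepsilon}^{\bf p}_k$).} From the evolution equation, $\ten{\varepsilon}^{\bf p}_k(x,t)=\ten{\varepsilon}^{\bf p}_0(x)+\int_0^t (\ten{\varepsilon}^{\bf p}_k)_t(x,\tau)\dtau$. Using convexity of $M^*$ in $\xi$,
\begin{equation}
M^*(x,\ten{\varepsilon}^{\bf p}_k(x,t)) \;\leq\; \tfrac{1}{2}M^*(x,2\ten{\varepsilon}^{\bf p}_0(x))+\tfrac{1}{2}M^*\!\left(x,\,2\int_0^t(\ten{\varepsilon}^{\bf p}_k)_t\dtau\right).
\end{equation}
The first summand is integrable on $\Omega$ by the hypothesis $\ten{\varepsilon}^{\bf p}_0\in L_{M^*}(\Omega,\mathcal{S}^3_d)$ together with the $\Delta_2$-condition on $M^*$ from Assumption~\ref{ass_G}. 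For the second summand, writing $2\int_0^t(\ten{\varepsilon}^{\bf p}_k)_t\dtau = 2T\cdot\tfrac{1}{T}\int_0^t(\ten{\varepsilon}^{\bf p}_k)_t\dtau$ and iterating the $\Delta_2$-condition finitely many times to absorb the constant $2T$,
\begin{equation}
M^*\!\left(x,\,2T\cdot\tfrac{1}{T}\int_0^t(\ten{\varepsilon}^{\bf p}_k)_t\dtau\right)\;\leq\;c_T\,M^*\!\left(x,\tfrac{1}{T}\int_0^t(\ten{\varepsilon}^{\bf p}_k)_t\dtau\right)+h_T(x),
\end{equation}
with $h_T\in L^1(\Omega)$. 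Applying Jensen's inequality (convexity of $M^*$) to the inner integral yields
\begin{equation}
M^*\!\left(x,\tfrac{1}{T}\int_0^t(\ten{\varepsilon}^{\bf p}_k)_t\dtau\right)\;\leq\;\tfrac{1}{T}\int_0^T M^*(x,(\ten{\varepsilon}^{\bf p}_k)_t)\dtau.
\end{equation}
Integrating over $Q$ and invoking the uniform bound from Step~1 together with $\ten{\varepsilon}^{\bf p}_0\in L_{M^*}(\Omega,\mathcal{S}^3_d)$, we conclude $\int_Q M^*(x,\ten{\varepsilon}^{\bf p}_k)\dxdt\leq C$ uniformly in $k$.

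\textbf{Main obstacle.} The only genuine difficulty is handling the factor $T$ (independent of $\xi$ but multiplying the averaged time integral) that arises when passing from the pointwise-in-time representation of $\ten{\varepsilon}^{\bf p}_k$ to a modular estimate. Since $M^*$ is an $N$-function, multiplying its argument by a constant larger than $1$ is not harmless in general; this is precisely why the $\Delta_2$-condition on $M^*$ imposed in Assumption~\ref{ass_G} is indispensable. The identification step from Minty-Browder (already carried out in Step~2 of the main argument) is what permits the transfer of the modular estimate from $\ten{G}(\tilde{\theta}+\theta_{k,l},\tilde{\ten{T}}^d+\ten{T}^d_{k,l})$ to $(\ten{\varepsilon}^{\bf p}_k)_t$.
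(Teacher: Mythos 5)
Your proof is correct and follows essentially the same route as the paper: the identification $(\ten{\varepsilon}^{\bf p}_{k})_t=\ten{G}(\tilde{\theta}+\theta_{k},\tilde{\ten{T}}^d+\ten{T}^d_{k})$ together with the energy estimate of Lemma \ref{pom_2} yields the bound on the time derivative, and the bound on $\ten{\varepsilon}^{\bf p}_{k}$ follows from the time-integral representation, convexity/Jensen and the $\Delta_2$-condition used to absorb the multiplicative constants (the paper's $2^{d}\geq T$ trick is exactly your iteration of $\Delta_2$). The only difference is cosmetic: you spell out the weak-$*$ lower semicontinuity of the modular when carrying the bound from level $(k,l)$ to the limit $\ten{\chi}_{k}$, a step the paper leaves implicit.
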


\begin{proof}
Let us consider the equation for the evolution of visco-elastic strain tensor
\begin{equation}
(\ten{\varepsilon}^{\bf p}_{k})_t= \ten{G}(\tilde{\theta} + \theta_{k},\tilde{\ten{T}}^d + \ten{T}_{k}^d).
\nonumber
\end{equation}
Moreover
\begin{equation}
\ten{\varepsilon}^{\bf p}_{k}(x,t) = \ten{\varepsilon}^{\bf p}_{k}(x,0) + \int_0^t(\ten{\varepsilon}^{\bf p}_{k}(x,s))_s \ds .
\nonumber
\end{equation}
Integrating the value of $M^*(x,\ten{\varepsilon}^{\bf p}_{k}(x,t))$ over cylinder $Q$ and using $\Delta_2$-condition of $N$-function $M^*$ \eqref{eq:def_delta2} we get 
\begin{equation}
\begin{split}
\int_Q M^*(x,\ten{\varepsilon}^{\bf p}_{k}(x,t)) \dxdt 
& \leq 	c \int_Q M^*(x,\frac{1}{2}\ten{\varepsilon}^{\bf p}_{k}(x,t)) \dxdt 
+T\int_{\Omega}h(x) \dx
\\
&
=
c\int_Q M^*(x,\frac{1}{2}\ten{\varepsilon}^{\bf p}_{k}(x,0) + \frac{1}{2}\int_0^t(\ten{\varepsilon}^{\bf p}_{k}(x,s))_s \ds))\dxdt
+T\int_{\Omega}h(x) \dx .
\end{split}
\nonumber
\end{equation}
Using the convexity of $M^*$ we obtain
\begin{equation}
\begin{split}
\int_Q M^*(x,\ten{\varepsilon}^{\bf p}_{k}(x,t)) \dxdt 
& 
\leq
\frac{c}{2}\int_Q M^*(x,\ten{\varepsilon}^{\bf p}_{k}(x,0) )\dxdt
\\
& \quad
+\frac{c}{2}\int_Q M^*(x,\int_0^t \ten{G}(\tilde{\theta}+\theta_{k},\tilde{\ten{T}}^d + \ten{T}_{k}^d) (x,s) \ds))\dxdt
+T\int_{\Omega}h(x) \dx .
\end{split}
\label{eq:lm9_1}
\end{equation}
Let us focus on the middle term on the right hand side of abovementioned equation. Changing the variable $\tau = \frac{t}{T}$ we obtain
\begin{equation}
\begin{split}
\int_0^T\int_{\Omega} & M^*(x,\int_0^t \ten{G}(\tilde{\theta}+\theta_{k},\tilde{\ten{T}}^d + \ten{T}_{k}^d) (x,s) \ds))\dxdt
\\
&=
T\int_0^1\int_{\Omega}  M^*(x,\int_0^{\tau T} \ten{G}(\tilde{\theta}+\theta_{k},\tilde{\ten{T}}^d + \ten{T}_{k}^d) (x,s) \ds))\dx \dtau .
\end{split}
\nonumber
\end{equation}
By Jensen inequality we get 
\begin{equation}
\begin{split}
T\int_0^1\int_{\Omega} & M^*(x,\int_0^t \ten{G}(\tilde{\theta}+\theta_{k},\tilde{\ten{T}}^d + \ten{T}_{k}^d) (x,s) \ds))\dxdt
\\
& \leq 
T\int_0^1 \int_{\Omega} \frac{1}{\tau T} \int_0^{\tau T}  M^*(x, \tau T \ten{G}(\tilde{\theta}+\theta_{k},\tilde{\ten{T}}^d + \ten{T}_{k}^d))) \ds \dx \dtau
\\
& \leq 
T\int_0^1 \int_{\Omega} \frac{1}{\tau T} \int_0^{\tau T}  \tau M^*(x, T \ten{G}(\tilde{\theta}+\theta_{k},\tilde{\ten{T}}^d + \ten{T}_{k}^d))) \ds \dx \dtau
\\
& =
\int_0^1 \int_{\Omega} \int_0^{\tau T}  M^*(x, T \ten{G}(\tilde{\theta}+\theta_{k},\tilde{\ten{T}}^d + \ten{T}_{k}^d))) \ds \dx \dtau .
\end{split}
\nonumber
\end{equation}
There exists $d\in\mathbb{R}$ such that $2^d\geq T$. Then using the $\Delta_2$-condition, coming back to original variable and using the Fubini theorem we get
\begin{equation}
\begin{split}
\int_0^1 \int_{\Omega} &\int_0^{\tau T}  M^*(x, T \ten{G}(\tilde{\theta}+\theta_{k},\tilde{\ten{T}}^d + \ten{T}_{k}^d))) \ds \dx \dtau
\\
& \leq
\int_0^1 \int_{\Omega} \int_0^{\tau T}  M^*(x, 2^d \ten{G}(\tilde{\theta}+\theta_{k},\tilde{\ten{T}}^d + \ten{T}_{k}^d))) \ds \dx \dtau
\\
& \leq
c^d\int_0^1 \int_{\Omega} \int_0^{\tau T}  M^*(x,  \ten{G}(\tilde{\theta}+\theta_{k},\tilde{\ten{T}}^d + \ten{T}_{k}^d))) \ds \dx \dtau
+ C(d)\int_{\Omega}h(x)\dx
\\
& =
\frac{c^d}{T}\int_0^T \int_{\Omega} \int_0^{t}  M^*(x,  \ten{G}(\tilde{\theta}+\theta_{k},\tilde{\ten{T}}^d + \ten{T}_{k}^d))) \ds \dx \dt
+ C(d)\int_{\Omega}h(x)\dx
\\
& \leq
c^d\int_0^T \int_{\Omega} M^*(x,  \ten{G}(\tilde{\theta}+\theta_{k},\tilde{\ten{T}}^d + \ten{T}_{k}^d))) \dxdt
+ C(d)\int_{\Omega}h(x)\dx .
\end{split}
\end{equation}
Coming back to \eqref{eq:lm9_1} we get
\begin{equation}
\begin{split}
\int_Q M^*(x,\ten{\varepsilon}^{\bf p}_{k}(x,t)) \dxdt 
& 
\leq
\frac{cT}{2}\int_{\Omega} M^*(x,\ten{\varepsilon}^{\bf p}_{k}(x,0) )\dx
\\
& \quad
+c^d\int_0^T \int_{\Omega} M^*(x,  \ten{G}(\tilde{\theta}+\theta_{k},\tilde{\ten{T}}^d + \ten{T}_{k}^d))) \dxdt
+ C(d)\int_{\Omega}h(x)\dx .
\end{split}
\nonumber
\end{equation}
Lemma \ref{pom_2} and initial condition in $L_{M^*}(\Omega,\mathcal{S}^3_d)$ complete the proof.
\end{proof}

\begin{lemat}
The sequence $\{\vc{u}_{k}\}$ is uniformly bounded in $ BD_{M^*} (\Omega,\mathbb{R}^3)$.
\label{wsp_org_u}
\end{lemat}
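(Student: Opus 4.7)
The plan is to exploit the constitutive identity $\ten{\varepsilon}(\vc u_k) = \ten{D}^{-1}\ten{T}_k + \ten{\varepsilon}^{\bf p}_k$, show separately that both summands are uniformly bounded in $L_{M^*}(Q,\mathcal{S}^3)$, and then invoke a Korn–Poincar\'e type inequality (in the spirit of \cite{Stokes}) to control $\|\vc{u}_k\|_{L^1}$ from $\|\ten{\varepsilon}(\vc u_k)\|_{L_{M^*}}$. The homogeneous Dirichlet condition $\vc{u}_k=0$ on $\partial\Omega$, inherited from the basis $\{\vc w_i\}\subset W^{1,2}_0(\Omega,\mathbb{R}^3)$ and preserved in the limit $l\to\infty$, is what makes such an estimate available.

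For the plastic strain part, the bound is already provided: Lemma \ref{wsp_org_ep} states that $\{\ten{\varepsilon}^{\bf p}_k\}$ is uniformly bounded in $L_{M^*}(Q,\mathcal{S}^3_d)$, so nothing further is needed here.

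For the elastic part $\ten{D}^{-1}\ten{T}_k$, I would combine three ingredients. First, the Remark following Lemma \ref{pom_2} gives $\{\ten{T}_{k,l}\}$ uniformly bounded in $L^{\infty}(0,T;L^2(\Omega,\mathcal{S}^3))$, and this estimate is preserved under the weak convergence $\ten{T}_{k,l}\rightharpoonup\ten{T}_k$ from \eqref{eq:44} by lower semicontinuity; since $\ten{D}^{-1}$ is bounded as a linear map on $\mathcal{S}^3$, $\{\ten{D}^{-1}\ten{T}_k\}$ is uniformly bounded in $L^2(Q)$. Second, the comparison inequality in Assumption \ref{ass_G}(1) yields
\begin{equation}
\int_Q M^*\bigl(x,\ten{D}^{-1}\ten{T}_k\bigr)\dxdt \leq \int_Q |\ten{D}^{-1}\ten{T}_k|^2\dxdt \leq C,
\end{equation}
so the $M^*$-modular of $\ten{D}^{-1}\ten{T}_k$ is uniformly controlled. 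Third, the $\Delta_2$-condition on $M^*$ from Assumption \ref{ass_G}(2) upgrades this modular bound to a bound in the Luxemburg norm, so $\{\ten{D}^{-1}\ten{T}_k\}$ is uniformly bounded in $L_{M^*}(Q,\mathcal{S}^3)$. Adding this to the bound from Lemma \ref{wsp_org_ep} and using the triangle inequality for the Luxemburg norm gives a uniform estimate for $\{\ten{\varepsilon}(\vc{u}_k)\}$ in $L_{M^*}(Q,\mathcal{S}^3)$.

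It remains to bound $\|\vc{u}_k\|_{L^1}$. Since $M^*$ is an $N$-function, a standard inequality gives $\|\cdot\|_{L^1(Q)}\le C\,\|\cdot\|_{O,M^*}$ (or equivalently a bound in terms of the modular plus one), so the previous step also yields a uniform $L^1(Q)$ bound on $\ten{\varepsilon}(\vc{u}_k)$. The homogeneous Dirichlet data then permits a Korn–Poincar\'e inequality in $BD$ spaces, giving $\|\vc{u}_k\|_{L^1(\Omega)}\leq C\|\ten{\varepsilon}(\vc{u}_k)\|_{L^1(\Omega)}$ a.e.\ in time, hence uniformly. Together with the Orlicz bound on $\ten{\varepsilon}(\vc{u}_k)$ this produces the required uniform estimate in $BD_{M^*}$. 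The only genuine technical point is the passage between the modular and the Luxemburg norm for the elastic summand; this is where the assumption that $M^*$ satisfies $\Delta_2$, rather than $M$ itself, becomes essential, and it is the reason this assumption was imposed in Assumption \ref{ass_G}.
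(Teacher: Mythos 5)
Your proof is correct and follows essentially the paper's own route: the paper also splits the strain as $\ten{\varepsilon}(\vc{u}_k)=(\ten{\varepsilon}(\vc{u}_k)-\ten{\varepsilon}^{\bf p}_k)+\ten{\varepsilon}^{\bf p}_k$ (your $\ten{D}^{-1}\ten{T}_k+\ten{\varepsilon}^{\bf p}_k$), bounds the elastic part through the $L^2$ bound on $\ten{T}_k$ together with Assumption \ref{ass_G}(1), bounds the plastic part by Lemma \ref{wsp_org_ep}, and closes with the Anzellotti--Giaquinta inequality $\|\vc{u}_k\|_{L^1}\le C\|\ten{\varepsilon}(\vc{u}_k)\|_{L^1}$ (your Korn--Poincar\'e step; the zero trace of $\vc{u}_k$ inherited from the basis $\{\vc{w}_i\}$ is indeed what makes it available). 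The only divergence is bookkeeping: the paper stays at the level of modulars, invoking the $\Delta_2$-condition for $M^*$ precisely to write $M^*(x,\ten{a}+\ten{b})\le c\,M^*\bigl(x,\tfrac{1}{2}(\ten{a}+\ten{b})\bigr)+h(x)$ before splitting by convexity, whereas you pass to Luxemburg norms and use the triangle inequality. One correction to your closing remark: $\Delta_2$ is not what converts a modular bound into a norm bound. A uniform modular bound $\int_Q M^*(x,\ten{\xi}_k)\dxdt\le C$ always yields $\|\ten{\xi}_k\|_{L,M^*}\le\max(C,1)$, since by convexity $\int_Q M^*(x,\ten{\xi}_k/\lambda)\dxdt\le\frac{1}{\lambda}\int_Q M^*(x,\ten{\xi}_k)\dxdt$ for $\lambda\ge 1$; it is the reverse implication (norm bound to modular bound) that requires $\Delta_2$. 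Consequently, in your norm-level version $\Delta_2$ is not actually needed for this lemma at all, while in the paper it enters only through the modular splitting just described (and in the proof of Lemma \ref{wsp_org_ep}).
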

\begin{proof}
Let us start with showing the uniform boundedness of the sequence $\{\tenepuk\}$ in the space $L_{M^*}(Q)$. Using $\Delta_2$-condition, convexity of $N$-function and Assumption \ref{ass_G} we obtain
\begin{equation}
\begin{split}
\int_Q M^*(x,\ten{\varepsilon}(\vc{u}_{k})) \dxdt
& \leq 
c \int_Q M^*(x,\frac{1}{2}\tenepuk) \dxdt + \int_Q h(x)\dxdt
\\
& =
c \int_Q M^*(x,\frac{1}{2}(\tenepuk -\tenepp_{k}) +\frac{1}{2}\tenepp_{k}) \dxdt + 
T\int_{\Omega} h(x) \dx
\\
& \leq 
\frac{c}{2}\int_Q M^*(x,\ten{\varepsilon}(\vc{u}_{k})-\tenepp_{k} ) \dxdt
+ \frac{c}{2} \int_Q M^*(x,\tenepp_{k})\dxdt
+ T\int_{\Omega} h(x) \dx
\\
& \leq
\frac{c}{2} \int_Q |\ten{\varepsilon}(\vc{u}_{k})-\tenepp_{k}|^2 \dxdt
+ \frac{c}{2} \int_Q M^*(x,\tenepp_{k}) \dxdt
+ T\int_{\Omega} h(x) \dx
\\
& \leq
\frac{c}{2} \int_Q |\ten{T}_{k}|^2 \dxdt
+ \frac{c}{2} \int_Q M^*(x,\tenepp_{k}) \dxdt
+ T\int_{\Omega} h(x) \dx .
\end{split}
\end{equation}
Following Anzellotti and Giaquinta \cite[Preposition 1.2 a)]{AnzellottiGiaquinta} we get the inequality 
\begin{equation}
\|\vc{u}_k\|_{L^1(Q)} \leq C \|\ten{\varepsilon}(\vc{u}_k)\|_{L^1(Q)},
\nonumber
\end{equation}
where $C$ is a constant depending on $\Omega$. Finally we get the estimates
\begin{equation}
\|\vc{u}_k\|_{L^1(Q)} \leq C\int_Q M^*(x,\ten{\varepsilon}(\vc{u}_k))\dxdt ,
\nonumber
\end{equation}
which completes the proof.
 

\end{proof}

\subsection{Limit passage $k \to\infty$ }\label{lim_k}

The considerations over the second limit passage we start from discussing the existence of heat equation solution. In the Appendix \ref{A}, we prove the existence of renormalised solution to parabolic equation with Neumann boundary condition, which is an extension of results presented by Blanchard and Murat in series of papers. In \cite{Blanchard, BlanchardMurat} the existence and uniqueness of renormalised solution is proved in the case of Dirichlet boundary condition. 

Uniform boundedness presented in previous sections gives us the following convergences  
\begin{equation}
\begin{array}{cl}
\vc{u}_{k}\rightharpoonup \vc{u} & \mbox{weakly in } BD_{M^*}(Q,\mathbb{R}^3),\\
\ten{T}_{k}\rightharpoonup \ten{T}  &  \mbox{weakly in }   L^2(Q,\mathcal{S}^3),\\
\ten{T}^d_{k}\rightharpoonup^{*} \ten{T}^d  &  \mbox{weakly* in }   L_M(Q,\mathcal{S}^3_d),\\
\ten{G}(\tilde{\theta} + \theta_{k},\tilde{\ten{T}}^d + \ten{T}_{k}^d)\rightharpoonup^{*} \ten{\chi}  & \mbox{weakly* in } L_{M^*}(Q,\mathcal{S}^3_d), \\
(\ten{\varepsilon}^{\bf p}_{k})_t \rightharpoonup^{*} (\ten{\varepsilon}^{\bf p})_t &  \mbox{weakly* in } L_{M^*}(Q,\mathcal{S}^3_d).
\end{array}
\label{eq:104a}
\end{equation}
Then as described in Appendix \ref{A} we also have convergences
\begin{equation}
\begin{array}{cl}
\mathcal{T}_K(\theta_{k})\rightarrow \mathcal{T}_K(\theta)  &  \mbox{ in }   L^2(0,T,W^{1,2}(\Omega)),\\
\theta_{k}\rightarrow \theta  &  \mbox{a.e. in } \Omega \times (0,T),
\end{array}
\label{eq:104b}
\end{equation}
for every $K>0$. Using these convergences in  
\eqref{app_system_0T2a}$_{(1)}$ and \eqref{app_system_0T2_2}, we get
\begin{equation}
\begin{split}
\int_Q  \ten{T} :  \nabla\ten{\varphi}\dxdt &= 0
\\
\int_Q(\ten{\varepsilon}^{\bf p})_t : \ten{\psi}\dxdt &= 
\int_Q\ten{\chi} : \ten{\psi}\dxdt 
\end{split}
\label{app_system_0T2a2}
\end{equation}
for $\ten{\varphi}\in C^{\infty}([0,T],L^2(\Omega,\mathbb{R}^3))$ and $\ten{\psi}\in L_M(Q,\mathcal{S}^3)$. 

To complete the limit passage we deal with the same problem as in the previous step, i.e. we have to identify the limit of the right hand side of heat equation. Once again, the identification of this limit contains of three steps.

In the proof of the following lemma we proceed similarly as in the proof of Lemma \ref{lm:8}.

\begin{lemat}
The following inequality holds for the solution of approximate systems.
\begin{equation}
\limsup_{k\rightarrow\infty}\int_{0}^{t_2}\int_{\Omega}\ten{G}(\tilde{\theta} + \theta_{k},\tilde{\ten{T}}^d + \ten{T}^d_{k}):\ten{T}^d_k \dxdt \leq
\int_{0}^{t_2}\int_{\Omega}\ten{\chi}:\ten{T}^d \dxdt .
\label{jedna_nierownosc_1}
\end{equation}
\end{lemat}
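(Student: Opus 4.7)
The plan is to run exactly the same three-step template that proved Lemma \ref{lm:8}, but now at the level $k\to\infty$. After identifying $\ten{\chi}_k=\ten{G}(\tilde{\theta}+\theta_k,\tilde{\ten{T}}^d+\ten{T}^d_k)$ in the previous Minty--Browder step, the energy identity \eqref{gran} translates, in the sense of distributions in time, into
\[
\frac{d}{dt}\mathcal{E}(\ten{\varepsilon}(\vc{u}_k),\ten{\varepsilon}^{\bf p}_k)=-\int_{\Omega}\ten{G}(\tilde{\theta}+\theta_k,\tilde{\ten{T}}^d+\ten{T}_k^d):\ten{T}_k^d\dx.
\]
Testing with the cut-off $\psi_{\mu,\tau}$ from \eqref{psi-mu}, integrating by parts in time, and taking $\liminf_{k\to\infty}$ with the lower semicontinuity of the quadratic functional $\mathcal{E}$ under the weak $L^2(Q,\mathcal{S}^3)$-convergence of $\ten{T}_k$ yields, as in \eqref{jedna_nierownosc},
\[
\limsup_{k\to\infty}\int_0^T\!\!\int_{\Omega}\ten{G}(\tilde{\theta}+\theta_k,\tilde{\ten{T}}^d+\ten{T}_k^d):\ten{T}_k^d\,\psi_{\mu,\tau}\dxdt\le -\frac{1}{\mu}\int_\tau^{\tau+\mu}\mathcal{E}(\ten{\varepsilon}(\vc{u}),\ten{\varepsilon}^{\bf p})(t)\dt+\mathcal{E}(\ten{\varepsilon}(\vc{u}),\ten{\varepsilon}^{\bf p})(0).
\]

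The second ingredient is the companion energy equality for the limit triple $(\vc{u},\ten{\varepsilon}^{\bf p},\ten{\chi})$, namely
\[
\mathcal{E}(\ten{\varepsilon}(\vc{u}),\ten{\varepsilon}^{\bf p})(t_2)-\mathcal{E}(\ten{\varepsilon}(\vc{u}),\ten{\varepsilon}^{\bf p})(0)=-\int_0^{t_2}\!\!\int_{\Omega}\ten{\chi}:\ten{T}^d\dxdt.
\]
I would derive this by testing \eqref{app_system_0T2a2}$_1$ with $(\ten{D}^{-1}\ten{T}*\eta_\epsilon\,\mathbf{1}_{(t_1,t_2)})*\eta_\epsilon$ mollified in time and \eqref{app_system_0T2a2}$_2$ with $(\ten{T}^d*\eta_\epsilon\,\mathbf{1}_{(t_1,t_2)})*\eta_\epsilon$, then subtracting; this is the verbatim analogue of \eqref{pierwsze_r1}--\eqref{granica_l}. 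The left-hand side passes to the limit $\epsilon\to 0$ by classical $L^2$ mollifier theory, since $\ten{\varepsilon}(\vc{u})-\ten{\varepsilon}^{\bf p}=\ten{D}^{-1}\ten{T}\in L^2(Q,\mathcal{S}^3)$; the right-hand side is handled by the modular convergences $\ten{T}^d*\eta_\epsilon\xrightarrow{M}\ten{T}^d$ and $\ten{\chi}*\eta_\epsilon\xrightarrow{M^*}\ten{\chi}$ supplied by Lemmas \ref{lm:16}, \ref{lm:19} and \ref{lm:20}, after which Lemma \ref{lm:18} transfers the convergence to the pairing. Passing $t_1\to 0$ using weak-$L^2$ continuity closes this step.

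Combining the two ingredients, I integrate the energy equality over $(\tau,\tau+\mu)$ against $\tfrac{1}{\mu}$, apply the Fubini rearrangement \eqref{71}--\eqref{psi} to reproduce the cut-off $\psi_{\mu,\tau}$ on the right, and compare with the $\limsup$ inequality above to obtain
\[
\limsup_{k\to\infty}\int_0^T\!\!\int_{\Omega}\ten{G}(\tilde{\theta}+\theta_k,\tilde{\ten{T}}^d+\ten{T}_k^d):\ten{T}_k^d\,\psi_{\mu,\tau}\dxdt\le \int_0^T\!\!\int_{\Omega}\ten{\chi}:\ten{T}^d\,\psi_{\mu,\tau}\dxdt.
\]
The final splitting $\ten{T}_k^d=(\tilde{\ten{T}}^d+\ten{T}_k^d)-\tilde{\ten{T}}^d$ together with the passage $\mu\to 0^+$ is then performed exactly as in the last block of the proof of Lemma \ref{lm:8}, yielding \eqref{jedna_nierownosc_1}.

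The main obstacle is the modular passage $\epsilon\to 0$ on the right-hand side of the mollified energy identity: because $L_M$ is not reflexive (only $M^*$ satisfies $\Delta_2$), weak compactness is unavailable, so one must verify separately the uniform integrability of $\{M(x,\ten{T}^d*\eta_\epsilon)\}$ and $\{M^*(x,\ten{\chi}*\eta_\epsilon)\}$ together with their convergence in measure to $\ten{T}^d$ and $\ten{\chi}$; these are precisely the hypotheses of Lemmas \ref{lm:16}--\ref{lm:20} and constitute the genuinely new ingredient compared to \cite{GKSG_NH}, where reflexive $L^p$-theory sufficed.
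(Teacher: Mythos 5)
Your first and third ingredients (the $\psi_{\mu,\tau}$-testing of the level-$k$ energy balance with lower semicontinuity, and the Fubini/splitting/$\mu\to0^+$ endgame) coincide with the paper. The genuine gap is in your second ingredient, the energy identity for the limit functions, which you claim is the ``verbatim analogue'' of \eqref{pierwsze_r1}--\eqref{granica_l}. It is not, and your testing scheme fails as written: \eqref{app_system_0T2a2}$_{(1)}$ admits only vector-valued (displacement) test functions $\vc{\varphi}$, so the matrix-valued field $(\ten{D}^{-1}\ten{T}*\eta_\epsilon\,\mathbf{1}_{(t_1,t_2)})*\eta_\epsilon$ cannot be inserted; and even reinterpreted, $\ten{D}^{-1}\ten{T}=\ten{\varepsilon}(\vc{u})-\ten{\varepsilon}^{\bf p}$ is not in the closed span of $\{\ten{\varepsilon}(\vc{w}_n)\}$ (it is $\ten{D}$-orthogonal to it), so no vanishing integral results. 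What the verbatim analogue actually requires is to use the mollified limit velocity $(\vc{u}*\eta_\epsilon)_t$ as a test function, i.e.\ to justify $\int_Q\ten{T}*\eta_\epsilon:(\ten{\varepsilon}(\vc{u})*\eta_\epsilon)_t\dxdt=0$. This is exactly what is unavailable at this stage: $\vc{u}$ lies only in $BD_{M^*}(Q,\mathbb{R}^3)$, so $\ten{\varepsilon}(\vc{u})\in L_{M^*}(Q)$ and need not belong to $L^2(Q)$, while the full tensor $\ten{T}$ is only bounded in $L^2$ (only its deviator lies in $L_M$); hence neither the $L^2$--$L^2$ nor the $L_M$--$L_{M^*}$ duality controls this pairing, and the limit equilibrium identity has only been established against (finite combinations of) the basis $\ten{\varepsilon}(\vc{w}_n)$, not against $\ten{\varepsilon}(\vc{u})_t$.

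The paper circumvents precisely this obstruction by a mixed-level argument rather than a repetition of Lemma \ref{lm:8}: the limit equilibrium identity is tested with the level-$k$ quantity $((\ten{\varepsilon}(\vc{u}_k)*\eta_{\epsilon})_t\mathbf{1}_{(t_1,t_2)})*\eta_{\epsilon}$ (a finite Galerkin combination, hence admissible), the level-$k$ flow rule \eqref{app_system_0T2_2} is tested with the mollified limit stress $(\ten{T}^d*\eta_{\epsilon}\mathbf{1}_{(t_1,t_2)})*\eta_{\epsilon}$, and after subtraction only the combination $(\ten{\varepsilon}(\vc{u}_k)-\ten{\varepsilon}^{\bf p}_k)_t*\eta_{\epsilon}=\ten{D}^{-1}\ten{T}_k*(\eta_{\epsilon})_t$ appears against $\ten{T}*\eta_{\epsilon}$, see \eqref{pierwsze_r2}, \eqref{drugie_r2a} and \eqref{granica_k_ptrzed}. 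Since $\ten{D}^{-1}\ten{T}_k$ is uniformly bounded in $L^2(Q,\mathcal{S}^3)$, one may pass $k\to\infty$ first, for fixed $\epsilon$, using the weak $L^2$-convergence of $\ten{T}_k$ and the weak-$*$ convergence of $\ten{G}(\tilde{\theta}+\theta_k,\tilde{\ten{T}}^d+\ten{T}^d_k)$, and only afterwards send $\epsilon\to0$ (by the modular convergence Lemmas \ref{lm:16}, \ref{lm:18}--\ref{lm:20}, as you correctly anticipate) and $t_1\to0$, arriving at \eqref{granica_k}. So the Orlicz-space mollification issue you flag is real but is not the only new difficulty; the order of limits ($k\to\infty$ before $\epsilon\to0$) and the choice of which equation is tested at which approximation level are what make the limit energy identity provable, and your proposal omits this device.
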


\begin{proof}
Using the lower semicontinuity in $L^2(Q)$ we get
\begin{equation}\label{mu4a}
\begin{split}
\liminf\limits_{k\to\infty}\int_{0}^{T}
\frac{d}{d t}& \mathcal{E}(\ten{\varepsilon}(\vc{u}_{k}) , \ten{\varepsilon}^{\bf p}_{k}) \,\psi_{\mu,\tau}\dt
\\
& =\liminf\limits_{k\to\infty}\frac{1}{\mu}\int_{\tau}^{\tau+\mu}\mathcal{E}(\ten{\varepsilon}(\vc{u}_{k}) , \ten{\varepsilon}^{\bf p}_{k})(t) \dt-
\lim\limits_{k\to\infty}\mathcal{E}(\ten{\varepsilon}(\vc{u}_{k}) , \ten{\varepsilon}^{\bf p}_{k})(0)\\
&\ge \frac{1}{\mu}\int_{\tau}^{\tau+\mu}\mathcal{E}(\ten{\varepsilon}(\vc{u}_{k}) , \ten{\varepsilon}^{\bf p}_{k})(t) \dt-
\mathcal{E}(\ten{\varepsilon}(\vc{u}) , \ten{\varepsilon}^{\bf p})(0).
\end{split}\end{equation}


We use $\ten{\varphi}_1= ((\ten{\varepsilon}(\vc{u}_k)*\eta_{\epsilon})_t \mathbf{1}_{(t_1,t_2)})*\eta_{\epsilon}$, where $\eta_{\epsilon}$ is a standard mollifier with respect to time,5 as a test function in \eqref{app_system_0T2a2} then
\begin{equation}
\int_{t_1}^{t_2} \int_{\Omega}\ten{D}(\ten{\varepsilon}(\vc{u}) - \ten{\varepsilon}^{\bf p})*\eta_{\epsilon}: (\ten{\varepsilon}(\vc{u}_k)*\eta_{\epsilon})_t \dxdt =
0.
\label{pierwsze_r2}
\end{equation}

Moreover, we use $\ten{\psi}=(\ten{T}^d*\eta_{\epsilon} \mathbf{1}_{(t_1,t_2)})*\eta_{\epsilon}$ as a test function in \eqref{app_system_0T2_2}. Then
\begin{equation}
\int_{t_1}^{t_2}\int_{\Omega}(\ten{\varepsilon}^{\bf p}_{k}*\eta_{\epsilon})_t:\ten{T}*\eta_{\epsilon} \dxdt =
\int_{t_1}^{t_2}\int_{\Omega}\ten{G}(\tilde{\theta}+\theta_k,\tilde{\ten{T}}^d+\ten{T}^d_k)*\eta_{\epsilon}:\ten{T}*\eta_{\epsilon} \dxdt .
\label{drugie_r2a}
\end{equation}
Products in \eqref{drugie_r2a} are well defined. 
Subtracting  these two equations we get
\begin{equation}
\int_{t_1}^{t_2}\int_{\Omega}\ten{T}*\eta_{\epsilon}:(\ten{\varepsilon}(\vc{u}_k) - \ten{\varepsilon}^{\bf p}_k)_t*\eta_{\epsilon} \dxdt=
- 
\int_{t_1}^{t_2}\int_{\Omega}\ten{G}(\tilde{\theta}+\theta_k,\tilde{\ten{T}}^d + \ten{T}^d_k)*\eta_{\epsilon}:\ten{T}^d*\eta_{\epsilon} \dxdt .
\label{granica_k_ptrzed}
\end{equation}
For every $\epsilon>0$ the sequence $\{(\ten{\varepsilon}(\vc{u}_k) - \ten{\varepsilon}^{\bf p}_k)_t*\eta_{\epsilon}\}$ belongs to $L^2(Q,\mathcal{S}^3)$ and is uniformly bounded in $L^2(Q,\mathcal{S}^3)$ with respect to $k$, hence we pass to the limit with $k\rightarrow\infty$ and we obtain

\begin{equation}
\int_{t_1}^{t_2}\int_{\Omega}\ten{T}*\eta_{\epsilon}:(\ten{\varepsilon}(\vc{u}) - \ten{\varepsilon}^{\bf p})_t*\eta_{\epsilon} \dxdt=
- 
\int_{t_1}^{t_2}\int_{\Omega}\ten{\chi}*\eta_{\epsilon}:\ten{T}^d*\eta_{\epsilon} \dxdt .
\nonumber
\end{equation}
Using the properties of convolution we get
\begin{equation}
\int_{\Omega}\ten{T}*\eta_{\epsilon}:(\ten{\varepsilon}(\vc{u}) - \ten{\varepsilon}^{\bf p})*\eta_{\epsilon} \dx \Big|_{t_1}^{t_2}=
- 
\int_{t_1}^{t_2}\int_{\Omega}\ten{\chi}*\eta_{\epsilon}:\ten{T}^d*\eta_{\epsilon}*\eta_{\delta} \dxdt .
\nonumber
\end{equation}
In the same way as in the previous section we pass to the limit with $\epsilon\rightarrow 0$ and then with $t_1\to0$
\begin{equation}
\int_{\Omega}\ten{D}(\ten{\varepsilon}(\vc{u}) - \ten{\varepsilon}^{\bf p}):(\ten{\varepsilon}(\vc{u}) - \ten{\varepsilon}^{\bf p}) \dx \Big|_{0}^{t_2}=
- 
\int_{0}^{t_2}\int_{\Omega}\ten{\chi}:\ten{T}^d \dxdt .
\label{granica_k}
\end{equation}
We multiply \eqref{granica_k} by $\frac{1}{\mu}$ and integrate over $(\tau,\tau+\mu)$ and proceed now in the same manner as in the proof of Lemma~\ref{lm:8}.
\end{proof}

The second and the third steps are conducted in the same way as in the previous limit passage, hence we omit this calculation. Using the Minty-Browder trick we show that
\begin{equation}
\ten{\chi} = \ten{G}(\tilde{\theta} + \theta,\tilde{\ten{T}}^d + \ten{T}^d )
\end{equation}
 a.e. in $Q$. Moreover using the Young measures tools we may pass to the limit in right hand side term of heat equation. Repeating the procedure from the previous limit passage we obtain
\begin{equation}
\mathcal{T}_k((\ten{T}_{k}^d + \tilde{\ten{T}}^d  ):  \ten{G}(\theta_{k} + \tilde{\theta},  \ten{T}^d_{k} + \tilde{\ten{T}}^d  ) ) 
\rightharpoonup
(\ten{T}^d + \tilde{\ten{T}}^d  ):  \ten{G}(\theta + \tilde{\theta},  \ten{T}^d + \tilde{\ten{T}}^d  ) )
\end{equation}
in $L^1(Q)$. 
Using the solution to problem \eqref{war_brz_u} we obtain
\begin{equation}
\begin{split}
\int_0^T\int_{\Omega}(\tilde{\ten{T}}+\ten{T}):\nabla\vc{\varphi} \dxdt 
&= \int_0^T\int_{\Omega}\vc{f}\cdot \vc{\varphi} \dxdt ,
\end{split}
\label{eq:koniec1}
\end{equation}
where 
\begin{equation}
\ten{T}=\ten{D}(\ten{\varepsilon}(\vc{u}) - \ten{\varepsilon}^{\bf p}),
\end{equation}
and \eqref{eq:koniec1} holds for every test function $\vc{\varphi}\in C^{\infty}([0,T],C^{\infty}_c(\Omega,\mathbb{R}^3))$. To get the renormalised solution to heat equation let us take $S'(\theta)\phi$ as a test function in \eqref{app_system_0T2po}, where $S$ is a $C^{\infty}(\mathbb{R})$ function, such that $S'$ has a compact support. Then, by Appendix \ref{A}, limit passage in heat equation is clear and
\begin{equation}
\begin{split}
-\int_Q S(\theta)\frac{\partial \phi}{\partial t}\dxdt -&\int_{\Omega} S(\theta_0)\phi(x,0)\dx
+ \int_Q S'(\theta)\nabla\theta \cdot\nabla\phi \dxdt 
\\
+ &\int_Q S''(\theta)|\nabla(\theta)|^2\phi \dxdt
= \int_Q  \ten{G}(\theta,\ten{T}^d):\ten{T}^d S'(\theta )\phi \dxdt
\end{split}
\end{equation}
holds for every test function $\phi\in C^{\infty}_c([-\infty,T),C^{\infty}(\Omega))$ and for every function $S\in C^{\infty}(\mathbb{R})$ such that $S'\in C_0^{\infty}(\mathbb{R})$, which completes the proof of Theorem \ref{th:main}.

%
%
%
%
%
%

\appendix
\section{Renormalised solutions to heat equation}
\label{A}
To deal with the heat equations we introduce the renormalised solutions. Renormalised solution for parabolic equation was presented in \cite{Blanchard,BlanchardMurat}, but only for the Dirichlet boundary conditions. Some proof from \cite{Blanchard,BlanchardMurat} need a modification for the case of Neumann boundary conditions.

Let us consider the system of equations
\begin{equation}
\left\{
\begin{array}{ll}
\frac{\partial \theta^{\varepsilon}}{\partial t} -\Delta \theta^{\varepsilon} = f^{\varepsilon}
& \mbox{in }Q,
\\
\frac{\partial \theta^{\varepsilon}}{\partial \vc{n}} =0
& \mbox{on }\partial\Omega \times (0,T),
\\
\theta^{\varepsilon}(t=0)=\theta^{\varepsilon}_0,
& \mbox{in }\Omega
\end{array}
\right.
\label{eq:ren}
\end{equation}
where for every positive $\varepsilon$ the function $f^{\varepsilon}$ belongs to $L^2(Q)$ and converges weakly to $f$ in $L^1(Q)$ and $\theta^{\varepsilon}_0$ belongs to $L^2(\Omega)$ and converges strongly to $\theta_0$ in $L^1(\Omega)$ as $\varepsilon$ tends to $0$. 

In our case $\frac{1}{\varepsilon}=k$ and 
\begin{equation}
\left\{
\begin{array}{ll}
f^{\varepsilon} = \mathcal{T}_k(( \tilde{\ten{T}}^d + \ten{T}_{k}^d):  \ten{G}(\tilde{\theta} + \theta_{k},   \tilde{\ten{T}}^d + \ten{T}^d_{k}  ) ) 
& \mbox{ in } Q,
\\
\theta^{\varepsilon}(x,0) = \mathcal{T}_k(\theta_0)
& \mbox{ in } \Omega ,
\end{array}
\right.
\end{equation}
and moreover we know that sequence $\{( \tilde{\ten{T}}^d + \ten{T}_{k}^d):  \ten{G}(\tilde{\theta} + \theta_{k},   \tilde{\ten{T}}^d + \ten{T}^d_{k}  )\}$ is uniformly bounded in $L^1(Q)$. Hence, there exists a weak limit of this sequence. Identification of this weak limit is discussed in Section \ref{lim_k}.

\begin{defi}{Renormalised solution to heat equation \cite[Definition 2.2]{BlanchardMurat}}

Let $f$ belong to $L^1(Q)$ and $\theta_0$ belong to $L^1(\Omega)$. A real-valued function $\theta$ defined on $Q$ is a renormalised solution of heat equation if
\begin{itemize}
\item[a)] $\theta$ belongs to $C([0,T],L^1(\Omega))$ and $\mathcal{T}_K(\theta)$ belongs to $L^2(0,T, W^{1,2}(\Omega))$ for all positive $K$;
\item[b)] for all positive $c$ 
\begin{equation}
\mathcal{T}_{K+c}(\theta)-\mathcal{T}_k(\theta)\to 0
\end{equation}
in $L^2(0,T, W^{1,2}(\Omega))$ as $K$ goes to $\infty$;
\item[c)] and $\theta(t=0)=\theta_0$.
\end{itemize}
Moreover, for all functions $S\in C^{\infty}(\mathbb{R})$, such that $S'$ belongs to $C^{\infty}_0(\mathbb{R})$ ($S'$ has a compact support), the following equality holds
\begin{equation}
\begin{split}
-\int_Q S(\theta)\frac{\partial \phi}{\partial t}\dxdt -\int_{\Omega} S(\theta_0)\phi(x,0)\dx
+ &\int_Q S'(\theta)\nabla\theta\cdot\nabla\phi \dxdt 
\\
+ &\int_Q S''(\theta)|\nabla\theta|^2\phi \dxdt
= \int_Q  fS'(\theta)\phi \dxdt
\end{split}
\end{equation}
for all $\phi\in C_0^{\infty}(Q)$.
\end{defi}

We use the notation $\lim_{\eta,\varepsilon\to 0}$ when the order in the passing to the limit is not relevant, i.e.
\begin{equation}
\lim_{\eta,\varepsilon\to 0} F_{\eta,\varepsilon} =
\lim_{\eta\to 0}\lim_{\varepsilon\to 0} F_{\eta,\varepsilon} =
\lim_{\varepsilon\to 0}\lim_{\eta\to 0} F_{\eta,\varepsilon}.
\nonumber
\end{equation}

\begin{lemat}
There exists a subsequence of the sequence $\{\theep\}_{\varepsilon}$ (still denoted by $\varepsilon$) and $\theta \in C([0,T],L^1(\Omega))$, such that when $\varepsilon$ tends to $0$ and for any fixed positive real number $K$ the following conditions are satisfied 
\begin{itemize}
\item[a)] $\theep$ converges almost everywhere in $Q$ to a measurable function $\theta$;
\item[b)] $\theep$ converges to $\theta$ in $C([0,T],L^1(\Omega))$;
\item[c)] $\mathcal{T}_K(\theep)$ converges weakly to $\mathcal{T}_K(\theta)$ in $L^2(0,T,W^{1,2}(\Omega))$;
\item[d)] there exists the following limit
\begin{equation}
\lim_{\eta,\varepsilon\to \infty} \int_Q |\nabla \mathcal{T}_k(\theep-\theet)|=0.
\end{equation}
\end{itemize}
\end{lemat}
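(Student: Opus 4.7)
The plan is to mimic the Blanchard--Murat scheme for Dirichlet renormalised solutions, adapted to the Neumann setting via an $L^\infty_t L^1_x$ estimate that replaces Poincar\'e. The starting point is three families of \emph{a priori} bounds. First, using $\mathcal{T}_K(\theep)$ as a test function in \eqref{eq:ren} and integrating in time gives
\begin{equation}
\int_\Omega \widetilde{S}_K(\theep(t))\dx + \int_0^t\int_\Omega |\nabla \mathcal{T}_K(\theep)|^2\dxdt
= \int_\Omega \widetilde{S}_K(\theep_0)\dx + \int_0^t\int_\Omega f^{\varepsilon}\mathcal{T}_K(\theep)\dxdt,
\end{equation}
where $\widetilde{S}_K(s)=\int_0^s \mathcal{T}_K(r)\dr$; since $\|f^{\varepsilon}\|_{L^1(Q)}$ is bounded and $|\mathcal{T}_K|\le K$, this yields $\|\nabla \mathcal{T}_K(\theep)\|_{L^2(Q)}\le C(K+1)^{1/2}$. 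Second, approximating $\mathrm{sgn}(\theep)$ by $\tfrac{1}{K}\mathcal{T}_K(\theep)$ and letting $K\to 0$ gives the contraction estimate $\|\theep(t)\|_{L^1(\Omega)}\le \|\theep_0\|_{L^1(\Omega)}+\|f^{\varepsilon}\|_{L^1(Q)}$, hence a uniform $L^\infty(0,T;L^1(\Omega))$ bound. Third, rewriting $\theep_t=\Delta\theep+f^{\varepsilon}$ as a distribution and restricting to truncations, one deduces that $(\mathcal{T}_K(\theep))_t$ is bounded in $L^1(0,T;W^{-1,q}(\Omega))+L^1(Q)$ for suitable $q$.

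For assertion (c) the bound on $\nabla \mathcal{T}_K(\theep)$ combined with the $L^\infty_t L^1_x$ control of $\theep$ (which bounds the spatial mean of $\mathcal{T}_K(\theep)$) gives boundedness of $\mathcal{T}_K(\theep)$ in $L^2(0,T;W^{1,2}(\Omega))$, so a weak limit $v_K$ exists along a subsequence. For assertion (a) I apply the Aubin--Lions lemma to $\mathcal{T}_K(\theep)$, using the gradient bound plus the time-derivative bound, to obtain strong $L^2(Q)$ convergence and hence almost-everywhere convergence of $\mathcal{T}_K(\theep)$; a diagonal extraction over $K=1,2,\dots$ produces a single subsequence on which $\theep\to\theta$ a.e.\ in $Q$, and by uniqueness of limits $v_K=\mathcal{T}_K(\theta)$.

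For assertions (b) and (d) I exploit the difference equation $(\theep-\theet)_t-\Delta(\theep-\theet)=f^{\varepsilon}-f^{\eta}$, which retains the homogeneous Neumann condition. Testing with $\mathcal{T}_K(\theep-\theet)$ produces
\begin{equation}
\int_\Omega \widetilde{S}_K(\theep-\theet)(t)\dx + \int_0^t\!\!\int_\Omega |\nabla \mathcal{T}_K(\theep-\theet)|^2\dxdt
\le \int_\Omega \widetilde{S}_K(\theep_0-\theet_0)\dx + \int_Q (f^{\varepsilon}-f^{\eta})\mathcal{T}_K(\theep-\theet)\dxdt.
\end{equation}
The first term on the right goes to zero because $\theep_0\to\theta_0$ in $L^1(\Omega)$. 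For the second, I use equi-integrability of $\{f^{\varepsilon}\}$: splitting $Q$ into a set of small measure (controlled by the Dunford--Pettis theorem) and its complement (on which $\mathcal{T}_K(\theep-\theet)\to 0$ in measure and hence, being bounded by $K$, in every $L^p$ with $p<\infty$), both contributions tend to zero as $\varepsilon,\eta\to 0$. Taking $K=1$ this gives (b) via the Cauchy criterion in $C([0,T];L^1(\Omega))$ (together with an equi-continuity in time obtained from the same test against $\mathcal{T}_1$); taking general $K$ gives (d) immediately from the gradient term on the left.

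The main obstacle is the coupling between the three estimates in the Neumann setting: unlike in Blanchard--Murat, one cannot recover an $L^\infty_t L^1_x$ bound on $\theep$ or the Cauchy property in $C([0,T];L^1(\Omega))$ merely from the gradient estimate via Poincar\'e, so the contraction estimate obtained by an $L^1$-type renormalisation (Ste\v{c}kin approximation of $\mathrm{sgn}$ applied to \eqref{eq:ren}) must be inserted before the compactness step, and the equi-integrability of $f^{\varepsilon}$ (inherited from the weak $L^1$ convergence via Dunford--Pettis) must be carefully used both in (b) and (d) to close the limit passage without appealing to reflexivity or to a vanishing mean.
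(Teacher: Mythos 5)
Your a priori estimates and your treatment of (c) and (d) essentially coincide with the paper's proof: testing \eqref{eq:ren} with $\mathcal{T}_K(\theep)$ gives the $L^2(Q)$ bound on $\nabla\mathcal{T}_K(\theep)$ together with the $L^\infty(0,T;L^1(\Omega))$ control (the paper extracts it from $\tilde{\mathcal{T}}_K$, you from the $K\to 0$ contraction — same content), and (d) comes from the difference equation tested with $\mathcal{T}_K(\theep-\theet)$, exactly as in the paper. The genuine gap is your route to (a), on which your proofs of (b) and (d) then depend through the convergence in measure of $\theep-\theet$. You assert that ``restricting the equation to truncations'' gives a bound on $(\mathcal{T}_K(\theep))_t$ in $L^1(0,T;W^{-1,q}(\Omega))+L^1(Q)$ and then invoke Aubin--Lions. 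That step is not justified and fails as stated: the time derivative of a truncation is not obtained by restricting the equation, since the (renormalized) chain rule produces
\begin{equation}
(\mathcal{T}_K(\theep))_t=\mathrm{div}\big(\nabla\mathcal{T}_K(\theep)\big)+\mathcal{T}_K'(\theep)\,f^{\varepsilon}-\mu^K_{\varepsilon},
\end{equation}
where $\mu^K_{\varepsilon}$ is a defect term concentrated on the level set $\{|\theep|=K\}$ (the limit of $S_n''(\theep)|\nabla\theep|^2$ for smooth approximations $S_n$ of $\mathcal{T}_K$). Even with the standard auxiliary estimate $\int_{\{K\le|\theep|\le K+\delta\}}|\nabla\theep|^2\le \delta\,(\|f^{\varepsilon}\|_{L^1(Q)}+\|\theta^{\varepsilon}_0\|_{L^1(\Omega)})$, this term is only controlled as a bounded measure on $Q$, not as an element of $L^1(0,T;W^{-1,q}(\Omega))$, so the classical Aubin--Lions/Simon lemma cannot be applied to $\mathcal{T}_K(\theep)$ without a genuinely new argument. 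Since (b) and (d) in your scheme use the convergence in measure delivered by (a), the whole proposal is left without its keystone.

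Two remarks on how this compares with the paper and how to repair it. The paper avoids compactness machinery entirely: it tests the difference of two approximate equations with $\frac{1}{\delta}\mathcal{T}_{\delta}(\theep-\theet)$ and lets $\delta\to 0$, obtaining an $L^1$-contraction that makes $\{\theep\}$ Cauchy in $C([0,T],L^1(\Omega))$; (a) then follows by extracting an a.e.\ convergent subsequence. This uses $\int_Q|f^{\varepsilon}-f^{\eta}|\dxdt\to 0$, i.e.\ strong $L^1$ convergence of the data — your instinct that only weak $L^1$ convergence is available (hence your Dunford--Pettis splitting of the source term, which is in fact handled more carefully in your write-up than in the paper) is precisely why you needed a compactness argument, and that is where the gap sits. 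If you keep that weaker hypothesis, you must prove the compactness honestly: for instance, derive the Boccardo--Gallou\"{e}t estimates (sum the truncation bounds over dyadic levels $K$, using the $L^\infty(0,T;L^1(\Omega))$ bound) to get $\nabla\theep$ bounded in $L^q(Q)$ for $q<\frac{5}{4}$, so that $\theep_t=\mathrm{div}(\nabla\theep)+f^{\varepsilon}$ is bounded in $L^q(0,T;W^{-1,q}(\Omega))+L^1(Q)$, and apply Simon's compactness theorem to $\theep$ itself to obtain relative compactness in $L^1(Q)$ and a.e.\ convergence; with (a) secured this way, your arguments for (b) (fixed $K$, equi-integrability plus convergence in measure) and for (d) close correctly.
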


\begin{proof}
Let us take $\mathcal{T}_K(\theep)$ as a test function in \eqref{eq:ren}. Then for $t\in (0,T)$
\begin{equation}
\int_0^t\int_{\Omega}\frac{\partial \theep}{\partial t} \mathcal{T}_K(\theep) \dxdt + 
\int_0^t\int_{\Omega}|\nabla \mathcal{T}_K(\theep)|^2\dxdt =
\int_0^t\int_{\Omega}f^{\varepsilon}\mathcal{T}_k(\theep),
\end{equation}
and
\begin{equation}
\int_{\Omega}\tilde{\mathcal{T}}_K(\theep)(t) \dx + 
\int_0^t\int_{\Omega}|\nabla \mathcal{T}_K(\theep)|^2\dxdt =
\int_0^t\int_{\Omega}f^{\varepsilon}\mathcal{T}_k(\theep) +
\int_{\Omega}\tilde{\mathcal{T}}_K(\theep_0) \dx ,
\end{equation}
where $\tilde{\mathcal{T}}_K(r)=\int_0^r \mathcal{T}_K(z)\dz$ is a positive real valued function. Using definition of the truncation and linear growth of function $\tilde{\mathcal{T}}_K(r)$ at infinity, the following estimate holds
\begin{equation}
\int_{\Omega}\tilde{\mathcal{T}}_K(\theep)(t) \dx + 
\int_0^t\int_{\Omega}|\nabla \mathcal{T}_K(\theep)|^2\dxdt \leq
K \|f\|_{L^1(Q)} + C(K)\|\theep_0\|_{L^1(\Omega)}.
\end{equation}
To show that the sequence $\{\mathcal{T}_K(\theep)\}_{\varepsilon>0}$ is uniformly bounded in $L^2(0,T,W^{1,2}(\Omega))$, it is enough to estimate $\|\mathcal{T}_K(\theep)\|_{L^2(Q)}$ by $\|\tilde{\mathcal{T}}_K(\theep)\|_{L^1(Q)}$ and $\|\nabla \mathcal{T}_K(\theep)\|_{L^2(Q)}$. By Poincar\'{e} inequality we get 
\begin{equation}
\begin{split}
\|\mathcal{T}_K(\theep)\|_{L^2(Q)} &\leq \|\mathcal{T}_K(\theep) - (\mathcal{T}_K(\theep))_{\Omega}\|_{L^2(Q)} + \|(\mathcal{T}_K(\theep))_{\Omega}\|_{L^2(Q)}
\\
&\leq \|\nabla \mathcal{T}_K(\theep) \|_{L^2(Q)} + \|(\mathcal{T}_K(\theep))_{\Omega}\|_{L^2(Q)},
\end{split}
\end{equation}
where by $(\mathcal{T}_K(\theep))_{\Omega}$ we denote the mean value. Using the definition of truncation operator we obtain
\begin{equation}
\tilde{\mathcal{T}}_K(\theep)=
\left\{
\begin{array}{ll}
\frac{1}{2}(\theep)^2 & |\theep|\leq K,
\\
\frac{1}{2}K^2 + K(\theep -K) & |\theep|> K,
\end{array}
\right.
\label{eq:tilteT}
\end{equation}
and then it remains to show the estimates for $(\mathcal{T}_K(\theep))_{\Omega}$
\begin{equation}
\int_{\Omega} |\mathcal{T}_K(\theep)|^2 \dx = 
\int_{\{x\in\Omega : |\theep|\leq K\} } |\theep|^2 +
\int_{\{x\in\Omega : |\theep|> K\} } K^2
\leq
2\int_{\Omega}\tilde{\mathcal{T}}_K(\theep)\dx .
\end{equation}
The finite measure of $Q$ implies that the sequence $\{\mathcal{T}_K(\theep)\}_{\varepsilon >0}$ is uniformly bounded in $L^2(0,T,W^{1,2}(\Omega))$.

For $\delta>0$, let us test the difference of two approximate equations \eqref{eq:ren}  
\begin{equation}
\frac{\partial}{\partial t}(\theep - \theet) - \Delta (\theep - \theet)= f^{\varepsilon}- f^{\eta}.
\label{eq:roznica}
\end{equation}
by function $\frac{1}{\delta}\mathcal{T}_{\delta}(\theep-\theet)$. As a result, we get
\begin{equation}
\frac{1}{\delta}\int_{\Omega}\tilde{\mathcal{T}}_{\delta}(\theep - \theet)(t) + 
\frac{1}{\delta}\int_0^t\int_{\Omega}|\nabla \mathcal{T}_{\delta} (\theep - \theet)|^2
= \frac{1}{\delta}\int_0^t\int_{\Omega}(f^{\varepsilon} - f^{\eta})\mathcal{T}_{\delta}(\theep - \theet) + \frac{1}{\delta}\int_{\Omega} \tilde{\mathcal{T}}_{\delta}(\theep_0 - \theet_0).
\end{equation}
Using the positivity of the second term of left hand side we obtain
\begin{equation}
\frac{1}{\delta}\int_{\Omega}\tilde{\mathcal{T}}_{\delta}(\theep - \theet)(t) \leq
 \int_0^t\int_{\Omega}|f^{\varepsilon} - f^{\eta}| + \frac{1}{\delta}\int_{\Omega} \tilde{\mathcal{T}}_{\delta}(\theep_0 - \theet_0).
\end{equation}
Passing to the limit as $\delta$ which tends to $0$ we obtain
\begin{equation}
\begin{split}
\lim_{\delta\to 0}\frac{1}{\delta}\int_{\Omega}\tilde{\mathcal{T}}_{\delta}(\theep - \theet)(t) &=
\int_{\Omega}(\theep - \theet)(t) \dx ,
\\
\lim_{\delta\to 0}\frac{1}{\delta}\int_{\Omega}\tilde{\mathcal{T}}_{\delta}(\theep_0 - \theet_0) &=
\int_{\Omega}(\theep_0 - \theet_0) \dx.
\end{split}
\end{equation}
Therefore,
\begin{equation}
\int_{\Omega}(\theep - \theet)(t) \dx
\leq
\int_0^t\int_{\Omega}|f^{\varepsilon} - f^{\eta}|
+
\int_{\Omega}(\theep_0 - \theet_0) \dx ,
\end{equation}
and we conclude that the sequence $\{\theep\}$ is a Cauchy sequence in $C([0,T],L^1(\Omega))$, hence there exists $\theta \in C([0,T],L^1(\Omega))$, such that $\theep \to \theta$ in $C([0,T],L^1(\Omega))$ as $\varepsilon$ tends to $0$.  

Testing the equation \eqref{eq:roznica} by $\mathcal{T}_K(\theep-\theet)$, we obtain
\begin{equation}
\int_{\Omega}\tilde{\mathcal{T}}_K(\theep - \theet)(T) + \int_Q |\nabla \mathcal{T}_K(\theep-\theet)|^2 =
\int_Q (f^{\varepsilon} - f^{\eta})\mathcal{T}_K(\theep - \theet) + \int_{\Omega} \tilde{\mathcal{T}}_K(\theep_0 - \theet_0)
\end{equation}
Positivity of the first term on the left hand side in abovementioned equation and the convergences of right hand side functions and initial conditions imply that
\begin{equation}
\lim_{\varepsilon,\eta\to 0}\int_Q |\nabla \mathcal{T}_K(\theep-\theet)|^2 = 0
\end{equation}
which completes the proof.
\end{proof}

\begin{lemat}
Let $K$ be a fixed positive real number. The sequence $\{\mathcal{T}_K(\theep)\}$ converges strongly to $\mathcal{T}_K(\theta)$ in $L^2(0,T,W^{1,2}(\Omega))$.
\label{lm:mocna_zbieznosc}
\end{lemat}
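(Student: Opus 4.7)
The plan is to split the desired convergence into an $L^2(Q)$ statement about the functions and an $L^2(Q)$ statement about their gradients. The first part is immediate: by part (a) of the previous lemma $\theep\to\theta$ almost everywhere in $Q$, and since $|\mathcal{T}_K(\theep)|\le K$ uniformly, Lebesgue's dominated convergence theorem gives $\mathcal{T}_K(\theep)\to\mathcal{T}_K(\theta)$ in $L^p(Q)$ for every $p<\infty$, in particular in $L^2(Q)$.

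For the gradient part I already know from part (c) of the previous lemma that $\nabla \mathcal{T}_K(\theep)\rightharpoonup \nabla \mathcal{T}_K(\theta)$ weakly in $L^2(Q)$. Since $L^2(Q)$ is a Hilbert space, weak convergence together with convergence of the norms implies strong convergence, so it suffices to prove
\begin{equation}
\lim_{\varepsilon\to 0}\int_Q|\nabla \mathcal{T}_K(\theep)|^2\dxdt = \int_Q|\nabla \mathcal{T}_K(\theta)|^2\dxdt.
\label{eq:plan-norm-conv}
\end{equation}
I plan to extract the left-hand side from the energy identity obtained by testing \eqref{eq:ren} with $\mathcal{T}_K(\theep)$,
\begin{equation*}
\int_\Omega\tilde{\mathcal{T}}_K(\theep)(T)\dx + \int_Q|\nabla \mathcal{T}_K(\theep)|^2\dxdt = \int_Q f^\varepsilon\,\mathcal{T}_K(\theep)\dxdt + \int_\Omega\tilde{\mathcal{T}}_K(\theep_0)\dx,
\end{equation*}
and pass to the limit on the right-hand side. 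The product term converges to $\int_Q f\,\mathcal{T}_K(\theta)\dxdt$ because $\{f^\varepsilon\}$ is weakly $L^1$-convergent, hence equi-integrable by Dunford--Pettis, while $\mathcal{T}_K(\theep)\to\mathcal{T}_K(\theta)$ almost everywhere with uniform $L^\infty$-bound $K$, so a Vitali-type argument combining Egorov's theorem with equi-integrability delivers the convergence. The boundary-in-time terms converge thanks to $\theep_0\to\theta_0$ in $L^1(\Omega)$, $\theep\to\theta$ in $C([0,T];L^1(\Omega))$, and the at-most linear growth of $\tilde{\mathcal{T}}_K$.

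The hard part will be to identify the resulting common limit as the right-hand side of \eqref{eq:plan-norm-conv}. The natural closure is to pass $\varepsilon\to 0$ directly in \eqref{eq:ren} to obtain $\partial_t\theta-\Delta\theta = f$ in $\mathcal{D}'(Q)$ with the initial datum and homogeneous Neumann condition inherited from the approximations, and then to insert the admissible test $\mathcal{T}_K(\theta)\in L^2(0,T;W^{1,2}(\Omega))$; the chain rule $\int_0^T\langle\partial_t\theta,\mathcal{T}_K(\theta)\rangle\dt = \int_\Omega[\tilde{\mathcal{T}}_K(\theta)]_0^T\dx$ then produces the analogous energy identity for $\theta$, whose gradient term matches the claimed limit. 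Combined with the $\liminf$-inequality from weak lower semicontinuity, this yields \eqref{eq:plan-norm-conv}. An equivalent self-contained route is to invoke part (d) of the previous lemma directly: decomposing $\int_Q|\nabla\mathcal{T}_K(\theep)-\nabla\mathcal{T}_K(\theet)|^2\dxdt$ according to whether each of $|\theep|$ and $|\theet|$ exceeds level $K$, the region where both are truncated is controlled by $\int_Q|\nabla\mathcal{T}_{2K}(\theep-\theet)|^2\dxdt\to 0$ (applying part (d) with $K$ replaced by $2K$), the region where both are untruncated contributes zero, and the two mixed regions are handled by comparison with a slightly larger truncation level $K'>K$, exploiting the uniform $L^2$-bound on $\nabla\mathcal{T}_{K'}(\theep)$. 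Either route, combined with the weak convergence, delivers the desired strong convergence of $\mathcal{T}_K(\theep)$ to $\mathcal{T}_K(\theta)$ in $L^2(0,T;W^{1,2}(\Omega))$.
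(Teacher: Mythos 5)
Your reduction to convergence of the gradient norms is sound, and your limit passage on the right-hand side of the energy identity for $\theep$ (equi-integrability of $\{f^{\varepsilon}\}$ via Dunford--Pettis, a.e.\ convergence and the uniform bound $K$ for the truncations, and the $C([0,T],L^1(\Omega))$ convergence for the time-boundary terms) is correct; weak lower semicontinuity then gives $\int_Q|\nabla\mathcal{T}_K(\theta)|^2\dxdt\le\lim_{\varepsilon\to0}\int_Q|\nabla\mathcal{T}_K(\theep)|^2\dxdt$. The genuine gap is the reverse inequality. Your first route asserts it by inserting $\mathcal{T}_K(\theta)$ into the limit equation and invoking the chain rule $\int_0^T\langle\partial_t\theta,\mathcal{T}_K(\theta)\rangle\dt=\int_\Omega\tilde{\mathcal{T}}_K(\theta)\dx\,\big|_0^T$, but this is exactly the step that cannot be taken for granted: $\theta$ lies only in $C([0,T],L^1(\Omega))$ with truncations in $L^2(0,T,W^{1,2}(\Omega))$, so $\theta$ is not an admissible test function, $\partial_t\theta=\Delta\theta+f$ is merely a distribution (the divergence of a field that is not square integrable plus an $L^1$ function), and both the duality pairing and the integration-by-parts formula require a time-regularization argument (Steklov averages or Landes' approximation). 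That justification is the technical core of the result; the paper does not reproduce it but refers for the proof to Blanchard \cite{Blanchard}, so your one-line appeal to the chain rule replaces precisely the part that needs proof.

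Your ``self-contained'' second route fails concretely in the mixed regions. On $\{|\theep|\le K<|\theet|\}$ you must show that $\int|\nabla\mathcal{T}_K(\theep)|^2$ is small. The portion where $K-\delta<|\theep|\le K$ is indeed controlled by the thin-slab energy estimate (test with $\mathcal{T}_K(\theep)-\mathcal{T}_{K-\delta}(\theep)$), and the remaining portion is contained in $\{|\theep-\theet|>\delta\}$, whose measure tends to zero; but small measure together with a uniform $L^2$ bound on $\nabla\mathcal{T}_{K'}(\theep)$ does not make the integral over that set small --- for that you would need equi-integrability of $|\nabla\mathcal{T}_{K'}(\theep)|^2$, which is essentially equivalent to the strong compactness you are trying to prove. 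Similarly, comparing with $\nabla\mathcal{T}_{K+L}(\theep-\theet)$ produces the term $\int_{\{K<|\theet|\le L\}}|\nabla\theet|^2$, which by the slab estimate is only of order $L-K$, not small. So neither route closes as written; in both cases the missing ingredient is the justification of the energy (in)equality for the limit $\theta$, i.e.\ the regularization-in-time argument carried out in \cite{Blanchard}.
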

The proof of this lemma can be found in \cite{Blanchard}.

Multiplying \eqref{eq:ren} by $S'(\theta^{\varepsilon})\phi$, where $S\in C^{\infty}(\mathbb{R})$ and $S'$ has a compact support and $\phi\in C_0^{\infty}(Q)$, we get
\begin{equation}
\begin{split}
-\int_Q S(\theta^{\varepsilon})\frac{\partial \phi}{\partial t}\dxdt -&\int_{\Omega} S(\theta^{\varepsilon}_0)\phi(x,0)\dx
+ \int_Q S'(\theta^{\varepsilon})\nabla\theta^{\varepsilon}\cdot\nabla\phi \dxdt 
\\
+ &\int_Q S''(\theta^{\varepsilon})|\nabla\theta^{\varepsilon}|^2\phi \dxdt
= \int_Q  f^{\varepsilon}S'(\theta^{\varepsilon})\phi \dxdt .
\end{split}
\label{eq:Sprim}
\end{equation}
$S'$ has a compact support, hence there exist $0<M<\infty$ such that $\mbox{supp}(S')\subset [-M,M]$. This allows us to enter into  equation \eqref{eq:Sprim} the truncations operator
\begin{equation}
\begin{split}
-\int_Q S(\theta^{\varepsilon})\frac{\partial \phi}{\partial t}\dxdt -&\int_{\Omega} S(\theta^{\varepsilon}_0)\phi(x,0)\dx
+ \int_Q S'(\mathcal{T}_M(\theta^{\varepsilon}))
\nabla\mathcal{T}_M(\theta^{\varepsilon}) \cdot\nabla\phi \dxdt 
\\
+ &\int_Q S''(\mathcal{T}_M(\theta^{\varepsilon}))|\nabla\mathcal{T}_M(\theta^{\varepsilon})|^2\phi \dxdt
= \int_Q  f^{\varepsilon}S'(\mathcal{T}_M(\theta^{\varepsilon}))\phi \dxdt .
\end{split}
\label{eq:Sprim2}
\end{equation}
Using the Egorov theorem applied to $S'(\theta^{\varepsilon})$ or to $S''(\theta^{\varepsilon})$ and using the bounded character of the remaining terms we can pass to the limit with $\varepsilon$ going to 0 in \eqref{eq:Sprim2} and we obtain
\begin{equation}
\begin{split}
-\int_Q S(\theta)\frac{\partial \phi}{\partial t}\dxdt -&\int_{\Omega} S(\theta_0)\phi(x,0)\dx
+ \int_Q S'(\mathcal{T}_M(\theta))
\nabla\mathcal{T}_M(\theta) \cdot\nabla\phi \dxdt 
\\
+ &\int_Q S''(\mathcal{T}_M(\theta))|\nabla\mathcal{T}_M(\theta)|^2\phi \dxdt
= \int_Q  fS'(\mathcal{T}_M(\theta))\phi \dxdt .
\end{split}
\label{eq:Sprim3}
\end{equation}
And finally, using the compact support of $S'$ we can omit the truncations in \eqref{eq:Sprim3}
\begin{equation}
\begin{split}
-\int_Q S(\theta)\frac{\partial \phi}{\partial t}\dxdt -&\int_{\Omega} S(\theta_0)\phi(x,0)\dx
+ \int_Q S'(\theta)
\nabla\theta \cdot\nabla\phi \dxdt 
\\
+ &\int_Q S''(\theta)|\nabla\theta|^2\phi \dxdt
= \int_Q  fS'(\theta)\phi \dxdt ,
\end{split}
\label{eq:Sprim4}
\end{equation}
which completes the proof of existence regarding renormalised solution to parabolic equation with Neumann boundary condition.

\begin{lemat}
Assuming that $\theta_{0,1}$ and $\theta_{0,2}$ lie in $L^1(\Omega)$, $f_{1}$ and $f_{2}$ lie in $L^1(Q)$ and they satisfy 
\begin{equation}
\left\{
\begin{split}
\theta_{0,1} &\leq\theta_{0,2}
\\
f_{1} &\leq f_{2}
\end{split}
\right.
\end{equation}
Then if $\theta_{1}$ and $\theta_{2}$ are two renormalised solutions respectively for date $(\theta_{0,1},f_1)$ and $(\theta_{0,2},f_2)$, we have 
\begin{equation}
\theta_{1}\leq\theta_{2}
\end{equation}
almost everywhere in $Q$.
\label{lm:uniq}
\end{lemat}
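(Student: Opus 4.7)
The plan is to adapt the Blanchard--Murat comparison argument for renormalised parabolic solutions to the present Neumann setting. The core idea is to subtract the renormalised identities satisfied by $\theta_1$ and $\theta_2$ and test with a non-negative function concentrated on $\{\theta_1>\theta_2\}$. Concretely, for a level $K>0$ I would choose $S_K\in C^{\infty}(\mathbb{R})$ with $S_K(r)=r$ on $[-K,K]$, $0\le S_K'\le 1$, and $\mathrm{supp}(S_K')\subset[-K-1,K+1]$. Let $h_\delta\in C^1(\mathbb{R})$ be a smooth non-decreasing approximation of the positive Heaviside function ($h_\delta\equiv 0$ on $(-\infty,0]$, $h_\delta\equiv 1$ on $[\delta,\infty)$), and let $\phi\in C^{\infty}_c([-\infty,T))$ be a non-negative time cut-off. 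The natural test function is $h_\delta(S_K(\theta_1)-S_K(\theta_2))\phi(t)$; since it depends on the solutions, I would first mollify each $\theta_i$ in time in order to legitimate its use, and then remove the mollification by standard commutator estimates.

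Subtracting the renormalised equations with $S=S_K$ and testing, the second-order term, after integration by parts against the homogeneous Neumann condition, contributes
\begin{equation*}
\int_Q h_\delta'(S_K(\theta_1)-S_K(\theta_2))\,\bigl|\nabla S_K(\theta_1)-\nabla S_K(\theta_2)\bigr|^2\phi\dxdt\ge 0,
\end{equation*}
and may therefore be dropped. Letting $\delta\to 0$, the time-derivative term yields $(S_K(\theta_1)-S_K(\theta_2))^+$ and the forcing on the right-hand side becomes $\bigl(f_1 S_K'(\theta_1)-f_2 S_K'(\theta_2)\bigr)\mathbf{1}_{\{S_K(\theta_1)>S_K(\theta_2)\}}$. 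On the set $\{\theta_1>\theta_2\}\cap\{|\theta_i|\le K\}$ this equals $f_1-f_2\le 0$ by hypothesis, which is precisely the sign needed.

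The main obstacle is handling the renormalisation remainders $S_K''(\theta_i)|\nabla\theta_i|^2$ that appear in each renormalised equation because $S_K$ is not the identity. These terms are supported in $\{K\le|\theta_i|\le K+1\}$, and I would control their total contribution by a multiple of $\|\nabla(\mathcal{T}_{K+1}(\theta_i)-\mathcal{T}_K(\theta_i))\|_{L^2(Q)}^2$, which tends to zero as $K\to\infty$ by property (b) in the definition of renormalised solution. An analogous argument, using $f_i\in L^1(Q)$ together with $|\{|\theta_i|>K\}|\to 0$ as $K\to\infty$, handles the discrepancy between $S_K'(\theta_i)$ and the constant $1$ outside the truncation region. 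Passing first $\delta\to 0$, then $K\to\infty$, and finally letting $\phi$ approximate $\mathbf{1}_{(0,t)}$, the argument reduces to
\begin{equation*}
\int_\Omega (\theta_1-\theta_2)^+(t)\dx\le\int_\Omega(\theta_{0,1}-\theta_{0,2})^+\dx+\int_0^t\int_{\{\theta_1>\theta_2\}}(f_1-f_2)\dx\,\mathrm{d}s.
\end{equation*}
Since both terms on the right-hand side are nonpositive by the assumed orderings of the data, $(\theta_1-\theta_2)^+=0$ almost everywhere in $Q$, which yields the claim.
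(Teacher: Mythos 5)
Your proposal is correct in substance, but note that the paper itself does not prove this lemma at all: it simply states ``Proof of this lemma can be found in \cite{BlanchardMurat}.'' What you have written is essentially a reconstruction of the Blanchard--Murat comparison argument, transplanted to the Neumann setting of Appendix \ref{A}. The transplant is harmless here: since the renormalised formulation already encodes the boundary condition weakly and your test function $h_\delta(S_K(\theta_1)-S_K(\theta_2))\phi(t)$ involves no spatial cut-off, no boundary terms appear, so the Dirichlet proof carries over verbatim. All the key mechanisms are correctly identified: the nonnegative term $\int_Q h_\delta'\,|\nabla S_K(\theta_1)-\nabla S_K(\theta_2)|^2\phi$ is discarded; the remainders $S_K''(\theta_i)|\nabla\theta_i|^2$ live on $\{K\le|\theta_i|\le K+1\}$ and vanish as $K\to\infty$ by property (b) of the definition (you should choose $S_K$ with $|S_K''|\le C$ uniformly in $K$, e.g.\ the standard construction with $S_K'$ decreasing linearly from $1$ to $0$ on $[K,K+1]$, so that the bound is really $C\|\nabla(\mathcal{T}_{K+1}(\theta_i)-\mathcal{T}_K(\theta_i))\|_{L^2(Q)}^2$); the mismatch $f_i(S_K'(\theta_i)-1)$ is killed by equi-integrability of a single $L^1$ function on $\{|\theta_i|>K\}$; and the initial term is controlled by $(S_K(\theta_{0,1})-S_K(\theta_{0,2}))^+\le(\theta_{0,1}-\theta_{0,2})^+=0$ since $0\le S_K'\le 1$, after which Fatou in $K$ gives $(\theta_1-\theta_2)^+=0$. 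The only genuinely delicate point is the one you flag but do not carry out: $\partial_t S_K(\theta_i)$ is only an element of $L^2(0,T,W^{-1,2}(\Omega))+L^1(Q)$ and the test function depends on the solutions, so the chain-rule identity producing $H_\delta(S_K(\theta_1)-S_K(\theta_2))$ must be justified by Landes-type time regularisation (or Steklov averaging) together with an extension of the renormalised identity to $L^2(0,T,W^{1,2}(\Omega))\cap L^{\infty}(Q)$ test functions; this is exactly the technical core of the Blanchard--Murat proof, and writing it out is what separates your outline from a complete argument. Compared with the paper, your route makes the comparison principle self-contained and makes explicit that nothing in the Neumann case requires modification, at the cost of reproducing a nontrivial regularisation lemma that the paper simply imports by citation.
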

Proof of this lemma can be found in \cite{BlanchardMurat}.

\begin{uwaga}
As a consequence of Lemma \ref{lm:uniq}, the renormalised solution is unique.
\end{uwaga}

\section{Orlicz spaces tools}
\label{B}
Assumption \ref{ass_G} requires the use of basic tools regarding generalized Orlicz spaces. Here we present some basic lemmas, which have been used to prove the existence of thermo-visco-elastic model solution. Following lemmas with the proof can be found in \cite{GSW,GWWZ,G1,EW}.

\begin{lemat}{Fenchel-Young inequality}

Let $M$ be an $N$-function and $M^*$ be complementary to $M$. Then the following inequality is satisfied
\begin{equation}
|\ten{\xi}:\ten{\eta}|\leq M(x,\ten{\xi}) + M^*(x,\ten{\eta})
\end{equation}
for all $\ten{\xi},\ten{\eta}\in \mathcal{S}^3$ and for almost all $x\in\Omega$.
\label{lm:funchel-Young}
\end{lemat}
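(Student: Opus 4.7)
The plan is to derive the inequality directly from the definition of the complementary function $M^*$. For arbitrary but fixed $\ten{\eta}\in\mathcal{S}^3$ and $x\in\Omega$, the Legendre-type definition
\begin{equation}
M^*(x,\ten{\eta}) = \sup_{\ten{\zeta}\in\mathcal{S}^3}(\ten{\zeta}:\ten{\eta} - M(x,\ten{\zeta}))
\nonumber
\end{equation}
immediately yields the pointwise bound $M^*(x,\ten{\eta}) \geq \ten{\xi}:\ten{\eta} - M(x,\ten{\xi})$ for any particular test matrix $\ten{\xi}\in\mathcal{S}^3$, since the supremum dominates every individual value of the argument. Rearranging gives the one-sided inequality
\begin{equation}
\ten{\xi}:\ten{\eta} \leq M(x,\ten{\xi}) + M^*(x,\ten{\eta}).
\nonumber
\end{equation}

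To upgrade this bound to one involving the absolute value on the left-hand side, I would apply the same argument with $-\ten{\xi}$ in place of $\ten{\xi}$, which produces $-\ten{\xi}:\ten{\eta} \leq M(x,-\ten{\xi}) + M^*(x,\ten{\eta})$. At this point I would invoke property 2) of Definition \ref{df:Nfunction}, namely the symmetry $M(x,-\ten{\xi}) = M(x,\ten{\xi})$ valid for a.e. $x\in\Omega$. Combining the two one-sided estimates and taking the larger of $\ten{\xi}:\ten{\eta}$ and $-\ten{\xi}:\ten{\eta}$ on the left yields the claimed inequality
\begin{equation}
|\ten{\xi}:\ten{\eta}| \leq M(x,\ten{\xi}) + M^*(x,\ten{\eta})
\nonumber
\end{equation}
for all $\ten{\xi},\ten{\eta}\in\mathcal{S}^3$ and for a.e. $x\in\Omega$.

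There is no real obstacle in this proof: the entire content of the lemma is encoded in the definition of $M^*$ as a convex conjugate, and the symmetry axiom of an $N$-function supplies the absolute value for free. The only mild point worth remarking is that the joint measurability in $x$ required for the right-hand side to make sense follows from the Carathéodory property of $M$ in Definition \ref{df:Nfunction} together with the remark that $M^*$ is itself an $N$-function, so no separate measurability argument is needed.
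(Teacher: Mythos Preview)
Your proof is correct and is the standard derivation of the Fenchel--Young inequality from the definition of the convex conjugate together with the evenness of $M$. The paper itself does not prove this lemma but merely states it and refers to \cite{GSW,GWWZ,G1,EW} for the proof, so there is nothing further to compare.
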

%

\begin{lemat}{H\"{o}lder inequality}

Let $M$ be an $N$-function and $M^*$ be complementary to $M$. Then the following inequality is satisfied
\begin{equation}
|\int_Q \ten{\xi}:\ten{\eta}\dxdt|\leq 2\|\ten{\xi}\|_{L,M}\| \ten{\eta}\|_{L,M^*}.
\end{equation}
\label{lm:Cauchy}
\end{lemat}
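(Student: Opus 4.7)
The plan is a textbook normalization argument combining the Fenchel--Young inequality (Lemma~\ref{lm:funchel-Young}) with the definition of the Luxemburg norm. The whole content is packed into two observations: (i) Fenchel--Young is a pointwise inequality, so it integrates; (ii) the Luxemburg norm is precisely the smallest scaling that brings the modular down to~$1$. Multiplying the two ``ones'' produces the factor~$2$ on the right-hand side.

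First I would dispose of the trivial cases. If either $\|\ten{\xi}\|_{L,M}=0$ or $\|\ten{\eta}\|_{L,M^*}=0$, then $\ten{\xi}=\ten 0$ or $\ten{\eta}=\ten 0$ almost everywhere in $Q$, so both sides of the claimed inequality vanish. If either norm is $+\infty$, the inequality is vacuous. Hence I may assume $0<\|\ten{\xi}\|_{L,M}<\infty$ and $0<\|\ten{\eta}\|_{L,M^*}<\infty$, and pick arbitrary $\lambda>\|\ten{\xi}\|_{L,M}$ and $\mu>\|\ten{\eta}\|_{L,M^*}$. By the definition of the Luxemburg norm these scalars satisfy
\begin{equation}
\int_Q M\!\left(x,\frac{\ten{\xi}}{\lambda}\right)\dxdt\le 1,\qquad
\int_Q M^{*}\!\left(x,\frac{\ten{\eta}}{\mu}\right)\dxdt\le 1.
\nonumber
\end{equation}

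Next, I apply Lemma~\ref{lm:funchel-Young} pointwise to the rescaled arguments $\ten{\xi}/\lambda$ and $\ten{\eta}/\mu$, obtaining for a.e.\ $(x,t)\in Q$
\begin{equation}
\left|\frac{\ten{\xi}(x,t)}{\lambda}:\frac{\ten{\eta}(x,t)}{\mu}\right|
\le M\!\left(x,\frac{\ten{\xi}(x,t)}{\lambda}\right)+M^{*}\!\left(x,\frac{\ten{\eta}(x,t)}{\mu}\right).
\nonumber
\end{equation}
Integrating over $Q$, pulling the scalar $1/(\lambda\mu)$ out of the left-hand side, and using the two modular bounds above yields
\begin{equation}
\frac{1}{\lambda\mu}\left|\int_Q \ten{\xi}:\ten{\eta}\dxdt\right|
\le \int_Q\left|\frac{\ten{\xi}}{\lambda}:\frac{\ten{\eta}}{\mu}\right|\dxdt
\le 1+1=2,
\nonumber
\end{equation}
so that $|\int_Q \ten{\xi}:\ten{\eta}\dxdt|\le 2\lambda\mu$.

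Finally, I take the infimum on the right-hand side over admissible $\lambda>\|\ten{\xi}\|_{L,M}$ and $\mu>\|\ten{\eta}\|_{L,M^*}$ independently, which gives the claimed bound $|\int_Q \ten{\xi}:\ten{\eta}\dxdt|\le 2\|\ten{\xi}\|_{L,M}\|\ten{\eta}\|_{L,M^*}$. There is essentially no obstacle here: the only point to be careful about is measurability of the pointwise product $\ten{\xi}:\ten{\eta}$ (which follows because $\ten{\xi}\in L_M(Q)$ and $\ten{\eta}\in L_{M^*}(Q)$ are measurable) and the legitimacy of integrating Fenchel--Young, which is automatic once both modulars are finite after the rescaling. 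The constant $2$ is sharp for the Luxemburg norm; one can recover the sharp constant $1$ only by pairing the Luxemburg norm with the Orlicz norm $\|\cdot\|_{O,M^{*}}$, but that refinement is not needed here.
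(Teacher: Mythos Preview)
Your proof is correct and is exactly the standard normalization argument one finds in the references the paper cites; the paper itself does not give a proof of this lemma but refers the reader to \cite{GSW,GWWZ,G1,EW}. The only cosmetic remark is that your justification ``for any $\lambda>\|\ten{\xi}\|_{L,M}$ the modular is $\le 1$'' relies on the admissible set $\{\lambda>0:\int_Q M(x,\ten{\xi}/\lambda)\dxdt\le 1\}$ being an upper ray, which follows from convexity of $M$ in the second variable --- you clearly know this, but it could be stated for completeness.
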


%


\begin{lemat}\label{lm:16}
Let $\ten{\xi}_i:Q\to \mathbb{R}^d$ be a measurable sequence. Then $\ten{\xi}_i \xrightarrow{M} \ten{\xi}$ in $L_M(Q)$ modularity if and only if $\ten{\xi}_i \to \ten{\xi}$ in measure and there exist some $\lambda>0$ such that the sequence $\{M(\cdot,\lambda \ten{\xi}_i)\}$ is uniformly integrable, i.e.
\begin{equation}
\lim_{R\to\infty} \left(\sup_{i\in\mathbb{N}} \int_{ \{(t,x):\ |M(x,\lambda\ten{\xi}_i)|\geq R\} } M(x,\lambda\ten{\xi}_i)\dxdt\right) =0 .
\end{equation}
\end{lemat}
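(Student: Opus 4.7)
The plan is to prove both directions by invoking Vitali's convergence theorem, combining in-measure convergence with uniform integrability, after using convexity and symmetry of $M$ to bound the relevant modulars by expressions whose $L^1$ behavior can be tracked. The flexibility in choosing the parameter $\lambda$ (recall modular convergence only requires \emph{some} positive $\lambda$) is what makes this bookkeeping go through.

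For the direction ``modular convergence implies the two conditions'', I would first extract convergence in measure from the modular convergence by exploiting the superlinear growth of $N$-functions (property (5) of Definition~\ref{df:Nfunction}): given $\delta>0$, choose $R(\delta)>0$ with $M(x,\eta)\ge R$ whenever $|\eta|\ge \delta/\lambda$, and then a Chebyshev-type bound
$$|\{(t,x):|\ten{\xi}_i-\ten{\xi}|\ge\delta\}|\le R^{-1}\int_Q M\!\left(x,\tfrac{\ten{\xi}_i-\ten{\xi}}{\lambda}\right)\dxdt\to 0$$
yields the desired convergence. Uniform integrability is obtained from the convex splitting
$$M(x,\lambda'\ten{\xi}_i)\le\tfrac12 M(x,2\lambda'(\ten{\xi}_i-\ten{\xi}))+\tfrac12 M(x,2\lambda'\ten{\xi}),$$
choosing $\lambda'$ small enough that $2\lambda'\le 1/\lambda$ and $2\lambda'\ten{\xi}\in\mathcal{L}_M(Q)$ (possible because $\ten{\xi}\in L_M(Q)$). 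The first term is then bounded by the $L^1$-null sequence $M(x,(\ten{\xi}_i-\ten{\xi})/\lambda)$, hence uniformly integrable, and the second is a fixed $L^1$ function.

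For the converse, assume $\ten{\xi}_i\to\ten{\xi}$ in measure and uniform integrability of $\{M(\cdot,\lambda\ten{\xi}_i)\}$ for some $\lambda>0$. Setting $\lambda'=2/\lambda$, convexity and symmetry give
$$M\!\left(x,\tfrac{\ten{\xi}_i-\ten{\xi}}{\lambda'}\right)\le\tfrac12 M(x,\lambda\ten{\xi}_i)+\tfrac12 M(x,\lambda\ten{\xi}).$$
Continuity of $M(x,\cdot)$ together with convergence in measure yields $M(\cdot,\lambda\ten{\xi}_i)\to M(\cdot,\lambda\ten{\xi})$ in measure, which combined with the assumed uniform integrability upgrades to strong $L^1$ convergence via Vitali; in particular $M(\cdot,\lambda\ten{\xi})\in L^1$, so the right-hand side above is a uniformly integrable family dominating the modular of the difference. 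Since $M(x,(\ten{\xi}_i-\ten{\xi})/\lambda')\to 0$ in measure (by continuity of $M$ and $M(x,0)=0$), Vitali's theorem produces $\int_Q M(x,(\ten{\xi}_i-\ten{\xi})/\lambda')\dxdt\to 0$, which is modular convergence.

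The main technical point will be juggling the various constants so that the convexity splits land in the correct integrability class. The subtlety is that in the forward direction we must use $\ten{\xi}\in L_M(Q)$ (rather than $E_M(Q)$) to locate $M(\cdot,2\lambda'\ten{\xi})$ in $L^1$ for some $\lambda'$, which is exactly the defining property of $L_M(Q)$ and fits naturally with the fact that modular convergence is a ``there exists $\lambda$'' statement rather than a ``for all $\lambda$'' statement; no $\Delta_2$ hypothesis on $M$ itself is needed anywhere in the argument.
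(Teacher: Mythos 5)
The paper itself offers no proof of Lemma \ref{lm:16} --- it is quoted from the literature (\cite{GSW,GWWZ,G1,EW}) --- so your argument has to stand on its own; its Vitali-theorem strategy is in fact the standard route for this equivalence. The converse direction is correct as written: the split $M(x,\lambda(\ten{\xi}_i-\ten{\xi})/2)\le\tfrac12 M(x,\lambda\ten{\xi}_i)+\tfrac12 M(x,\lambda\ten{\xi})$, the upgrade of $M(\cdot,\lambda\ten{\xi}_i)\to M(\cdot,\lambda\ten{\xi})$ in measure to strong $L^1$ convergence via Vitali on the finite cylinder $Q$, and then Vitali again for the modular of the difference, give exactly modular convergence with parameter $2/\lambda$. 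The uniform-integrability half of the forward direction is also fine, including the point that $\ten{\xi}\in L_M(Q)$ guarantees some multiple of $\ten{\xi}$ lies in $\mathcal{L}_M(Q)$, and that the lemma only asks for \emph{some} $\lambda$.

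The one step that would fail as written is the extraction of convergence in measure: you choose $R(\delta)>0$ such that $M(x,\ten{\eta})\ge R$ whenever $|\ten{\eta}|\ge\delta/\lambda$, i.e.\ a lower bound \emph{uniform in $x$}. Definition \ref{df:Nfunction} imposes conditions 4) and 5) only pointwise in $x$, so such an $R$ need not exist: $M(x,\ten{\xi})=a(x)|\ten{\xi}|^2$ with $a>0$ everywhere but $\operatorname{ess\,inf}_\Omega a=0$ is admissible and defeats the Chebyshev bound in this form. The repair is local. Set $h_\delta(x):=\inf_{|\ten{\eta}|=\delta/\lambda}M(x,\ten{\eta})$; it is measurable (infimum over a countable dense subset of the sphere, $M$ Carath\'eodory) and positive for a.a.\ $x$ (continuity, convexity and $M(x,\ten{\eta})=0\Leftrightarrow\ten{\eta}=\ten{0}$), and since $t\mapsto M(x,t\ten{\eta})$ is nondecreasing on $[0,\infty)$ one has $M\left(x,\frac{\ten{\xi}_i-\ten{\xi}}{\lambda}\right)\ge h_\delta(x)$ on $A_i^\delta:=\{|\ten{\xi}_i-\ten{\xi}|\ge\delta\}$. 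Hence $\int_{A_i^\delta}h_\delta\dxdt\to0$, and because $h_\delta>0$ a.e.\ the sets $B_\epsilon:=\{h_\delta<\epsilon\}$ satisfy $|B_\epsilon|\to0$ as $\epsilon\to0$, so the estimate $|A_i^\delta|\le|B_\epsilon|+\epsilon^{-1}\int_{A_i^\delta}h_\delta\dxdt$ forces $|A_i^\delta|\to0$. With this substitution your proof is complete; no $\Delta_2$ assumption is needed, as you observe.
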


%

\begin{lemat}\label{lm:17}
Let $M$ be an $N$-function and for all $i\in\mathbb{N}$, let $\int_Q M(x,\ten{\xi}_i) \dxdt \leq c$. Then the sequence $\{\ten{\xi}_i\}$ is uniformly integrable.
\end{lemat}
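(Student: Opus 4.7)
The plan is to establish uniform integrability of $\{\ten{\xi}_i\}$ in $L^{1}(Q)$ via the standard criterion that $\int_E |\ten{\xi}_i|\dxdt$ vanishes uniformly in $i$ as $|E|\to 0$. The mechanism is to transfer the modular control $\int_Q M(x,\ten{\xi}_i)\dxdt\le c$ into an $L^{1}$ bound by pairing $\ten{\xi}_i$ with a carefully chosen bounded dual field through the Fenchel-Young inequality (Lemma \ref{lm:funchel-Young}); the quadratic upper bound on $M^{*}$ supplied by Assumption \ref{ass_G}(1) then closes the estimate.

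Concretely, for each $i$ I would set $\ten{\omega}_i:=\ten{\xi}_i/|\ten{\xi}_i|$ on $\{\ten{\xi}_i\ne\ten{0}\}$ and $\ten{\omega}_i:=\ten{0}$ elsewhere, so that $|\ten{\omega}_i|\le 1$ pointwise and $\ten{\xi}_i:\ten{\omega}_i=|\ten{\xi}_i|$. For any $\lambda>0$, Fenchel-Young applied with $\ten{\eta}=\lambda\ten{\omega}_i$ gives the pointwise bound
$$\lambda|\ten{\xi}_i| \;\le\; M(x,\ten{\xi}_i)+M^{*}(x,\lambda\ten{\omega}_i).$$
Fix an arbitrary measurable $E\subset Q$. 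The field $\lambda\ten{\omega}_i\mathbf{1}_E$ is bounded and hence lies in $L_{M^{*}}(Q)$, so Assumption \ref{ass_G}(1) yields $\int_E M^{*}(x,\lambda\ten{\omega}_i)\dxdt \le \int_E |\lambda\ten{\omega}_i|^2 \dxdt \le \lambda^2|E|$. Combining this with the hypothesis $\int_E M(x,\ten{\xi}_i)\dxdt\le c$ gives
$$\lambda \int_E |\ten{\xi}_i|\dxdt \;\le\; c+\lambda^{2}|E|.$$
Dividing by $\lambda$ and minimising the right-hand side in $\lambda>0$ (optimum at $\lambda=\sqrt{c/|E|}$) produces the uniform-in-$i$ estimate
$$\int_E |\ten{\xi}_i|\dxdt \;\le\; 2\sqrt{c\,|E|},\qquad\text{for every }i\text{ and every measurable }E\subset Q,$$
which tends to $0$ as $|E|\to 0$; together with the special case $E=Q$ (which also yields the $L^1$-boundedness $\sup_i\|\ten{\xi}_i\|_{L^1(Q)}\le 2\sqrt{c|Q|}$), this is precisely the uniform integrability of $\{\ten{\xi}_i\}$.

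The main obstacle, such as it is, is the selection of the dual test field $\lambda\ten{\omega}_i$: one must verify that it is a measurable section of $\mathcal{S}^{3}$ bounded by $\lambda$, which is immediate, and that Assumption \ref{ass_G}(1) can be invoked on it, which follows since any $L^{\infty}$-field belongs to $L_{M^{*}}(Q)$. A proof independent of Assumption \ref{ass_G}(1) would route through the de la Vall\'ee Poussin criterion applied to the direction-independent envelope $h(x,s):=\inf_{|\ten{\omega}|=1}M(x,s\ten{\omega})$, which inherits the superlinearity in $s$ pointwise in $x$ from Definition \ref{df:Nfunction}(5); however, promoting pointwise superlinearity to an $L^{1}$-integrable majorant in the sense of de la Vall\'ee Poussin would require an Egorov-type uniformisation step, making the argument appreciably longer than the Fenchel-Young route above.
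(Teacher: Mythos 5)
Your Fenchel--Young computation is internally sound: with $\ten{\omega}_i=\ten{\xi}_i/|\ten{\xi}_i|$, the pointwise bound $\lambda|\ten{\xi}_i|\le M(x,\ten{\xi}_i)+M^*(x,\lambda\ten{\omega}_i)$, the quadratic domination of $M^*$ from Assumption \ref{ass_G}(1), and optimisation in $\lambda$ do give $\int_E|\ten{\xi}_i|\dxdt\le 2\sqrt{c|E|}$, hence uniform integrability. One small circularity should be acknowledged: Assumption \ref{ass_G}(1) is stated for $\ten{A}\in L_{M^*}(Q)$, so you must first justify that the truncated field $\lambda\ten{\omega}_i\mathbf{1}_E$ lies in $L_{M^*}(Q)$; this needs either local integrability of $M^*(\cdot,\ten{\eta})$ or a reading of the assumption as a modular inequality valid for all bounded measurable fields. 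That is minor, but it is not automatic from Definition \ref{df:Nfunction} alone.

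The substantive problem is scope. Lemma \ref{lm:17} is stated (and cited from \cite{GSW,GWWZ,G1,EW}; the paper gives no proof of its own) as a fact about an \emph{arbitrary} $N$-function $M$, with no reference to Assumption \ref{ass_G}. The argument in those references uses only the superlinearity of $M$ in the last variable (taken uniformly in $x$ in their definitions): given $\varepsilon>0$ choose $R$ so that $|\ten{\xi}|\le\varepsilon M(x,\ten{\xi})$ whenever $|\ten{\xi}|\ge R$, and split $\int_E|\ten{\xi}_i|\dxdt\le R|E|+\varepsilon\int_Q M(x,\ten{\xi}_i)\dxdt\le R|E|+\varepsilon c$. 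Your proof instead imports the model-specific bound $M^*\lesssim|\cdot|^2$, which is dual to a quadratic lower bound on $M$, so it proves the statement only for this particular $M$. In particular it cannot be run when the lemma is invoked with $M^*$ in the role of $M$ --- which is exactly the situation for sequences such as $\{\ten{G}(\tilde{\theta}+\theta_{k},\cdot)\}$, controlled only through $\int_Q M^*(x,\cdot)\dxdt$ --- because the paper assumes no upper growth condition on $M$ at all, so your term $\int_E M(x,\lambda\ten{\omega}_i)\dxdt$ is then uncontrollable. Your fallback sketch (the directional envelope plus de la Vall\'ee Poussin) does not close this gap either: the obstruction is the $x$-dependence of $M$, not the direction, and the ``Egorov-type uniformisation'' you defer is precisely the missing step. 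A proof in the lemma's stated generality should go through the superlinearity condition, understood uniformly in $x$ as in the cited works, via the splitting argument above.
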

%
%

\begin{lemat}
Let $M$ be an $N$-function and $M^*$ its complementary function. Suppose that the sequences $\ten{\Phi}_i: Q\to\mathcal{S}^3$ and $\ten{\Psi}_i: Q\to\mathcal{S}^3$ are uniformly bounded in $L_M(Q)$ and $L_{M^*}(Q)$, respectively. Moreover, $\ten{\Phi}_i \xrightarrow{M} \ten{\Phi}$ modularly in $L_M(Q)$ and $\ten{\Phi}_i \xrightarrow{M^*} \ten{\Phi}$ modularly in $L_{M^*}(Q)$. Then, $\ten{\Phi}_i:\ten{\Psi}_i \to \ten{\Phi}:\ten{\Psi}$ strongly in $L^1$.
\label{lm:18}
\end{lemat}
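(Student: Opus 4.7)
The plan is to invoke the Vitali convergence theorem, which requires two ingredients: convergence in measure of $\ten{\Phi}_i:\ten{\Psi}_i$ to $\ten{\Phi}:\ten{\Psi}$, and uniform integrability of $\{\ten{\Phi}_i:\ten{\Psi}_i\}$ on $Q$. Modular convergence is precisely designed to give both pieces through Lemma \ref{lm:16} together with the Fenchel--Young inequality, so the strategy is to extract each ingredient in turn and then combine them. (We understand the statement with the natural correction that $\ten{\Psi}_i \xrightarrow{M^*} \ten{\Psi}$ modularly in $L_{M^*}(Q)$.)

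First I would handle the convergence in measure. By the ``only if'' direction of Lemma \ref{lm:16}, the modular convergence $\ten{\Phi}_i \xrightarrow{M} \ten{\Phi}$ implies $\ten{\Phi}_i\to\ten{\Phi}$ in measure on $Q$, and similarly $\ten{\Psi}_i \to \ten{\Psi}$ in measure. Since the scalar-valued map $(\ten{A},\ten{B})\mapsto \ten{A}:\ten{B}$ is continuous, the two measure convergences combine to give
\begin{equation}
\ten{\Phi}_i:\ten{\Psi}_i \to \ten{\Phi}:\ten{\Psi} \qquad \mbox{in measure on } Q.
\end{equation}
This is standard and does not use any additional structure of $M$ or $M^*$.

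Next I would produce the uniform integrability of the products. By Lemma \ref{lm:16} again, there exist $\lambda_1,\lambda_2>0$ such that the families $\{M(x,\lambda_1\ten{\Phi}_i)\}_i$ and $\{M^*(x,\lambda_2\ten{\Psi}_i)\}_i$ are uniformly integrable on $Q$. Applying the Fenchel--Young inequality (Lemma \ref{lm:funchel-Young}) to the rescaled pair $\lambda_1\ten{\Phi}_i$ and $\lambda_2\ten{\Psi}_i$ gives the pointwise a.e.\ bound
\begin{equation}
|\ten{\Phi}_i:\ten{\Psi}_i| \leq \frac{1}{\lambda_1\lambda_2}\Bigl( M(x,\lambda_1\ten{\Phi}_i) + M^*(x,\lambda_2\ten{\Psi}_i)\Bigr).
\end{equation}
The right-hand side is a sum of two uniformly integrable families, hence is itself uniformly integrable, and the bound transfers this property to $\{\ten{\Phi}_i:\ten{\Psi}_i\}$. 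A direct application of Vitali's theorem then yields $\ten{\Phi}_i:\ten{\Psi}_i \to \ten{\Phi}:\ten{\Psi}$ strongly in $L^1(Q)$, which is the claim.

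The main (and really only) obstacle is the mismatch of the scaling parameters $\lambda_1$ and $\lambda_2$ coming from the two modular convergences; these cannot in general be chosen equal, which is why the Fenchel--Young inequality must be applied to $\lambda_1\ten{\Phi}_i$ paired with $\lambda_2\ten{\Psi}_i$ rather than to $\ten{\Phi}_i$ and $\ten{\Psi}_i$ directly. All the rest is a clean combination of Lemmas \ref{lm:funchel-Young} and \ref{lm:16} with Vitali's theorem, and no appeal to reflexivity, separability, or the $\Delta_2$-condition is required.
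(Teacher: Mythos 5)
Your proposal is correct. Note that the paper itself gives no proof of this lemma: it is listed in Appendix \ref{B} among results quoted from the references \cite{GSW,GWWZ,G1,EW}, and your argument (Lemma \ref{lm:16} to get convergence in measure and uniform integrability of $\{M(x,\lambda_1\ten{\Phi}_i)\}$, $\{M^*(x,\lambda_2\ten{\Psi}_i)\}$, the Fenchel--Young inequality applied to the rescaled pair to dominate $|\ten{\Phi}_i:\ten{\Psi}_i|$, then Vitali's theorem) is precisely the standard proof found in those sources, including the correct reading of the obvious typo ($\ten{\Psi}_i\xrightarrow{M^*}\ten{\Psi}$) and the handling of the mismatched scaling parameters. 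The only implicit ingredient worth stating is that $|Q|<\infty$, which is needed both for the product of two sequences converging in measure to converge in measure and for Vitali's theorem; this holds since $\Omega$ is bounded and $T<\infty$.
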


%

\begin{lemat}\label{lm:19}
Let $\rho_i$ be a standard mollifier, i.e. $\rho\in C^{\infty}(\mathbb{R})$, $\rho$ has a compact support and $\int_{\mathbb{R}} \rho(\tau)\dtau =1$, $\rho(\tau)=\rho(-\tau)$. We define $\rho_i(\tau)= i\rho(i\tau)$. Moreover, let $*$ denote a convolution in the variable $\tau$. Then for any function $\ten{\Phi}:Q\to\mathcal{S}^3$, such that $\ten{\Phi}\in L^1(Q,\mathcal{S}^3$, it holds
\begin{equation}
\rho_i*\ten{\Phi} \to \ten{\Phi} \qquad \mbox{in measure.}
\end{equation}
\end{lemat}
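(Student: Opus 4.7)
The plan is to reduce convergence in measure to strong $L^1$-convergence, which on the finite-measure set $Q$ is stronger and yields the conclusion via Chebyshev's inequality: for any $\delta > 0$,
\begin{equation}
|\{(x,t)\in Q:\ |\rho_i*\ten{\Phi}(x,t)-\ten{\Phi}(x,t)|>\delta\}|\leq\frac{1}{\delta}\|\rho_i*\ten{\Phi}-\ten{\Phi}\|_{L^1(Q)}.
\end{equation}
It therefore suffices to prove that $\rho_i*\ten{\Phi}\to\ten{\Phi}$ strongly in $L^1(Q,\mathcal{S}^3)$.

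To define the time-convolution near the endpoints of $(0,T)$, I would first extend $\ten{\Phi}$ by zero to $\Omega\times\mathbb{R}$, so that
\begin{equation}
\rho_i*\ten{\Phi}(x,t)=\int_{\mathbb{R}}\rho_i(t-s)\,\ten{\Phi}(x,s)\ds
\end{equation}
is well defined for every $(x,t)\in Q$. By Fubini applied to $|\ten{\Phi}|$, the slice $\ten{\Phi}(x,\cdot)$ lies in $L^1(\mathbb{R},\mathcal{S}^3)$ for almost every $x\in\Omega$. The classical one-dimensional mollifier approximation theorem, applied entry-wise, then gives
\begin{equation}
\|\rho_i*\ten{\Phi}(x,\cdot)-\ten{\Phi}(x,\cdot)\|_{L^1(0,T)}\longrightarrow 0\qquad\text{for a.e.\ }x\in\Omega,
\end{equation}
as $i\to\infty$.

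The final step is to integrate this pointwise-in-$x$ convergence via Lebesgue's dominated convergence theorem. Young's convolution inequality supplies the uniform bound $\|\rho_i*\ten{\Phi}(x,\cdot)\|_{L^1(0,T)}\leq\|\rho_i\|_{L^1(\mathbb{R})}\|\ten{\Phi}(x,\cdot)\|_{L^1(\mathbb{R})}=\|\ten{\Phi}(x,\cdot)\|_{L^1(\mathbb{R})}$, whence the integrand is dominated by $2\|\ten{\Phi}(x,\cdot)\|_{L^1(\mathbb{R})}$, which is an $L^1(\Omega)$ function by Fubini. Dominated convergence therefore produces $\|\rho_i*\ten{\Phi}-\ten{\Phi}\|_{L^1(Q)}\to 0$, and combined with Chebyshev this gives the claimed convergence in measure. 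There is no real obstacle; the only mild technicality is the zero-extension that handles the temporal boundary, which becomes irrelevant in the limit since $\operatorname{supp}\rho_i$ shrinks to $\{0\}$ as $i\to\infty$.
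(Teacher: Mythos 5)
Your proof is correct. Note that the paper itself does not prove Lemma \ref{lm:19}; Appendix \ref{B} only refers the reader to the cited literature, so there is no in-paper argument to compare against. Your route is the standard one: reduce convergence in measure on the finite-measure cylinder $Q$ to strong $L^1(Q)$ convergence via the Chebyshev--Markov inequality, and obtain the $L^1(Q)$ convergence by slicing in $x$, invoking the one-dimensional mollifier theorem for a.e.\ slice $\ten{\Phi}(x,\cdot)\in L^1$, and closing with Young's inequality plus dominated convergence. An equally common variant avoids slicing: one shows $\|\rho_i*\ten{\Phi}-\ten{\Phi}\|_{L^1(Q)}\to 0$ directly by approximating $\ten{\Phi}$ in $L^1(Q)$ by functions continuous in $t$ and using the uniform bound $\|\rho_i*\ten{\Phi}\|_{L^1(Q)}\le\|\ten{\Phi}\|_{L^1(Q)}$; this buys nothing essential over your argument. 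Your zero-extension convention at the temporal endpoints is harmless for the stated conclusion and is consistent with how the convolutions are actually used in the body of the paper, where they always appear cut off by indicators $\mathbf{1}_{(t_1,t_2)}$ with $(t_1,t_2)\subset(0,T)$.
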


%

\begin{lemat}\label{lm:20}
Let $\rho_i$ be a standard mollifier. Given an $N$-function $M$ and a function $\ten{\Phi}:Q\to\mathcal{S}^3$ such that $\ten{\Phi}\in\mathcal{L}_M(Q)$, the sequence $\{M(x,\rho_i*\ten{\Phi})\}$ is uniformly integrable.
\end{lemat}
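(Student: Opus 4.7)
\noindent\textbf{Proof plan for Lemma \ref{lm:20}.}

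The plan is to reduce uniform integrability of $\{M(x,\rho_i*\ten{\Phi})\}$ to uniform integrability of the mollification of a fixed $L^1$ function, to which standard tools apply. The single non-trivial ingredient is a pointwise Jensen bound that exploits the convexity of $M(x,\cdot)$ together with the probability-measure character of $\rho_i\,d\tau$.

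First I would set $g(x,t):=M(x,\ten{\Phi}(x,t))$. The assumption $\ten{\Phi}\in\mathcal{L}_M(Q)$ is exactly $g\in L^1(Q)$. Extending $\ten{\Phi}$ by zero in the time variable outside $(0,T)$ (which, since $M(x,\ten{0})=0$ by Definition \ref{df:Nfunction}, matches the corresponding zero extension of $g$), the convolution $(\rho_i*\ten{\Phi})(x,t)=\int_{\mathbb{R}}\rho_i(\tau)\ten{\Phi}(x,t-\tau)\dtau$ makes sense. Since $\rho_i\ge0$ and $\int_{\mathbb{R}}\rho_i(\tau)\dtau=1$, the measure $\rho_i(\tau)\dtau$ is a probability measure on $\mathbb{R}$. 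Convexity of $M(x,\cdot)$ and Jensen's inequality then yield the pointwise estimate
\begin{equation}
M\bigl(x,(\rho_i*\ten{\Phi})(x,t)\bigr)
\le \int_{\mathbb{R}}\rho_i(\tau)\,M\bigl(x,\ten{\Phi}(x,t-\tau)\bigr)\dtau
= (\rho_i*g)(x,t),
\label{eq:jensen-bound}
\end{equation}
where the convolution on the right is again taken in time.

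Next I would establish uniform integrability of the sequence $\{\rho_i*g\}$ in $L^1(Q)$. This is classical for mollifications of $L^1$ functions: Young's convolution inequality gives $\|\rho_i*g\|_{L^1(Q)}\le\|g\|_{L^1(Q)}$, and for any measurable $A\subset Q$ Fubini's theorem yields
\begin{equation}
\int_A (\rho_i*g)(x,t)\dxdt
=\int_{\mathbb{R}}\rho_i(\tau)\Bigl(\int_{A-\tau e_t}g(x,s)\dx\ds\Bigr)\dtau,
\end{equation}
where $A-\tau e_t$ denotes the translate of $A$ by $-\tau$ in the time direction, which has the same Lebesgue measure as $A$. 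By absolute continuity of the Lebesgue integral for the fixed $L^1$ function $g$, for every $\varepsilon>0$ there exists $\delta>0$ such that $\int_B g\dxdt<\varepsilon$ whenever $|B|<\delta$. Hence if $|A|<\delta$ then $\int_A(\rho_i*g)\dxdt<\varepsilon$ uniformly in $i$, which together with the uniform $L^1$-bound gives uniform integrability of $\{\rho_i*g\}$ in the Dunford--Pettis sense.

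Finally, \eqref{eq:jensen-bound} says $0\le M(x,\rho_i*\ten{\Phi})\le \rho_i*g$ pointwise a.e., so domination by a uniformly integrable family transfers uniform integrability to $\{M(x,\rho_i*\ten{\Phi})\}$. Concretely, for the level-set formulation used in Lemma \ref{lm:16} one writes $\{M(x,\rho_i*\ten{\Phi})\ge R\}\subset\{\rho_i*g\ge R\}$ and bounds
\begin{equation}
\int_{\{M(x,\rho_i*\ten{\Phi})\ge R\}} M(x,\rho_i*\ten{\Phi})\dxdt
\le \int_{\{\rho_i*g\ge R\}}\rho_i*g \dxdt,
\end{equation}
and the right-hand side tends to $0$ as $R\to\infty$ uniformly in $i$. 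The only place one has to be slightly careful is the time-extension convention (so that the Jensen step is legitimate and no spurious boundary contributions arise), but since $M(x,\ten 0)=0$ this is automatic with the zero extension. No deep tool is needed beyond Jensen's inequality and absolute continuity of the $L^1$ integral.
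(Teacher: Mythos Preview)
Your argument is correct. The paper does not actually prove Lemma~\ref{lm:20}; it is stated in Appendix~\ref{B} with a blanket reference to \cite{GSW,GWWZ,G1,EW} for the proof. Your approach---Jensen's inequality against the probability measure $\rho_i(\tau)\dtau$ to obtain the pointwise domination $M(x,\rho_i*\ten{\Phi})\le \rho_i*M(x,\ten{\Phi})$, followed by the elementary absolute-continuity argument for time-mollifications of a fixed $L^1$ function---is precisely the standard proof one finds in those references, and it works here because $M$ depends on $x$ but not on $t$ while the convolution is taken only in $t$, so the convex function in the Jensen step is genuinely fixed.
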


%
%

\section{Young measures tools}\label{C}
Right hand side term in the approximated heat equation is a product of elements of two sequences which converge weakly. To characterize the limit of this term we use Young measure theory. In this section, we present necessary lemmas. They come from \cite[Corollaries 3.2-3.4]{muller}. Similar technique was also used in \cite{PGAS,14Agnieszka}. 


\begin{lemat} 
Suppose that the sequence of maps $z_j: Q \to \mathbb{R}^d$ generates the Young measure $\nu: Q \to \mathcal{M}(\mathbb{R}^d)$. Let $F : \Omega \times \mathbb{R}^d \to \mathbb{R}^d$ be a Carath\'eodory function (i.e. measurable in the first argument and continuous in the second). Let us also assume that the negative part $F^- (x, z_j(x,t))$ is weakly relatively compact in $L^1(Q)$. Then
\begin{equation}
\liminf_{j\to\infty}\int_E F(x,z_j(x,t))\dxdt \geq 
\int_E\int_{\mathbb{R}^d} F(x,\lambda)\dnu_x(\lambda) \dxdt
\end{equation}
If, in addition, the sequence of functions $x \to |F|(x, z_j (x,t))$ is weakly relatively compact in $L^1(Q)$, then
\begin{equation}
F (\cdot, z_j (\cdot,\cdot)) \rightharpoonup \int_{\mathbb{R}^d} F (\cdot, \lambda) \dnu_x(\lambda) \qquad \mbox{in } L^1(Q).
\end{equation} 
\label{lm:ineq}
\end{lemat}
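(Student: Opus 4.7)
The plan is to reduce both statements to the \emph{fundamental theorem of Young measures} for bounded Carath\'eodory integrands, which asserts that if $G:\Omega\times\mathbb{R}^d\to\mathbb{R}$ is Carath\'eodory with $|G|\le N$, then
\begin{equation*}
G(\cdot,z_j(\cdot))\;\rightharpoonup\;\int_{\mathbb{R}^d}G(\cdot,\lambda)\dnu_x(\lambda)\qquad\text{weakly-}\ast\text{ in }L^\infty(Q);
\end{equation*}
this is essentially the defining property of the Young measure $\nu$ generated by $\{z_j\}$, combined with finiteness of $|Q|$. The unbounded case is then exhausted by a truncate-and-take-tails argument, exploiting the equi-integrability hypothesis through Dunford--Pettis.

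For the liminf inequality I would split $F=F^+-F^-$. Weak relative compactness of $\{F^-(\cdot,z_j)\}$ in $L^1(Q)$ together with the Carath\'eodory character of $F^-$ allows truncation at height $N$: apply the bounded form to the bounded Carath\'eodory integrand $F^-\wedge N$ and use equi-integrability to kill the tail $(F^--N)^+$ uniformly in $j$ as $N\to\infty$. This promotes the bounded statement to
\begin{equation*}
F^-(\cdot,z_j)\;\rightharpoonup\;\int_{\mathbb{R}^d}F^-(\cdot,\lambda)\dnu_x(\lambda)\qquad\text{in }L^1(Q).
\end{equation*}
For the positive part, the bounded form produces equality of the limit for $F^+\wedge N$, and the pointwise bound $F^+\ge F^+\wedge N$ converts this into a $\liminf\ge$ inequality for $F^+(\cdot,z_j)$. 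Two applications of monotone convergence ($F^+\wedge N\uparrow F^+$ inside $d\nu_x$ and then inside $dxdt$) yield
\begin{equation*}
\liminf_{j\to\infty}\int_E F^+(x,z_j)\dxdt\;\ge\;\int_E\int_{\mathbb{R}^d}F^+(x,\lambda)\dnu_x(\lambda)\dxdt.
\end{equation*}
Subtracting the identified weak $L^1$ limit of $F^-(\cdot,z_j)$, tested against $\mathbf{1}_E$, gives exactly the first assertion, with the right-hand side well defined in $\mathbb{R}\cup\{+\infty\}$.

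For the upgraded weak $L^1$ convergence, equi-integrability of $\{|F(\cdot,z_j)|\}$ forces equi-integrability of $\{F^+(\cdot,z_j)\}$, so the first assertion applies verbatim to $-F$ (whose negative part is $F^+$), delivering the reverse estimate
\begin{equation*}
\limsup_{j\to\infty}\int_E F(x,z_j)\dxdt\;\le\;\int_E\int_{\mathbb{R}^d}F(x,\lambda)\dnu_x(\lambda)\dxdt.
\end{equation*}
Hence $\lim_j\int_E F(x,z_j)\dxdt=\int_E\int F(x,\lambda)\dnu_x(\lambda)\dxdt$ for every measurable $E\subset Q$. By Dunford--Pettis the sequence $\{F(\cdot,z_j)\}$ is relatively weakly compact in $L^1(Q)$, and uniqueness of the weak limit — tested first against characteristic functions $\mathbf{1}_E$ and then extended to all of $L^\infty(Q)$ by density — identifies it as $\int_{\mathbb{R}^d}F(\cdot,\lambda)\dnu_\cdot(\lambda)$; a standard subsequence-of-subsequence argument upgrades the convergence from subsequences to the full sequence. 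The main obstacle is the passage from the bounded fundamental theorem to Carath\'eodory integrands that are only equi-integrable along $\{z_j\}$ — carried out once for $F^-$ and once for $F^+$ via the truncate-plus-Dunford--Pettis maneuver; the rest is linearity, monotone convergence, and the standard uniqueness-of-weak-limit identification.
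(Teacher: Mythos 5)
Your argument is correct. Note that the paper itself does not prove this lemma: it is quoted verbatim from the cited reference (M\"uller, \emph{Variational models for microstructure and phase transitions}, Corollaries 3.2--3.4), so there is no in-paper proof to compare against. Your route — take the weak-$\ast$ $L^\infty$ convergence for \emph{bounded} Carath\'eodory integrands as the starting point, split $F=F^+-F^-$, truncate at height $N$, use equi-integrability (Dunford--Pettis) to control the tails $(F^\mp-N)^+$ uniformly in $j$, pass $N\to\infty$ by monotone convergence, and then obtain the weak $L^1$ convergence by applying the liminf inequality to both $F$ and $-F$ together with a uniqueness-of-weak-limit and subsequence argument — is exactly the standard proof of these corollaries from the fundamental theorem of Young measures, and all the steps you outline (including the identification of the weak $L^1$ limit of $F^-(\cdot,z_j)$ and the handling of a possibly infinite right-hand side) are sound. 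The only thing to keep explicit is that the bounded-Carath\'eodory weak-$\ast$ statement you start from is itself a (standard) strengthening of the defining property of the generated Young measure, which is stated for continuous bounded test functions of $\lambda$ alone; citing it as M\"uller's Corollary 3.2 or proving it via approximation of Carath\'eodory integrands closes that small gap.
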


\begin{lemat} 
Let $u_j : Q \to \mathbb{R}^d$ , $v_j : Q \to \mathbb{R}^{d'}$ be measurable and suppose that $u_j \to u$ a.e. while $v_j$
generates the Young measure $\nu$. Then the sequence of pairs $(u_j , v_j ) : Q \to \mathbb{R}^{d+d'}$ generates the Young measure $x \to \delta_{u(x)}\otimes \nu_x$.
\label{lm:charakteryzacja}
\end{lemat}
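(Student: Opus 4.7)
\medskip

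\textbf{Proof plan for Lemma \ref{lm:charakteryzacja}.}

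The plan is to verify directly the defining property of a generating sequence: for every bounded Carath\'eodory function $\psi : Q \times \mathbb{R}^{d+d'} \to \mathbb{R}$ (equivalently, for every $f \in L^1(Q; C_0(\mathbb{R}^{d+d'}))$), one has
\begin{equation*}
\int_Q \psi\bigl(x, u_j(x), v_j(x)\bigr)\dx \;\longrightarrow\; \int_Q \int_{\mathbb{R}^{d+d'}} \psi(x, y, z) \,d\bigl(\delta_{u(x)}\otimes\nu_x\bigr)(y,z)\dx = \int_Q \int_{\mathbb{R}^{d'}} \psi\bigl(x, u(x), z\bigr)\,d\nu_x(z)\dx.
\end{equation*}
By a standard density argument on $\mathbb{R}^{d+d'}$ it is enough to treat the case $\psi(x,y,z) = \varphi(x)\, \eta(y)\, \zeta(z)$ with $\varphi \in L^\infty(Q)$, $\eta \in C_b(\mathbb{R}^d)$, $\zeta \in C_b(\mathbb{R}^{d'})$, and by linearity we may even assume $\psi(x,y,z) = \eta(y)\zeta(z)\chi_E(x)$ for a measurable $E \subset Q$. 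So the core task reduces to showing
\begin{equation*}
\int_E \eta\bigl(u_j(x)\bigr)\,\zeta\bigl(v_j(x)\bigr)\dx \;\longrightarrow\; \int_E \eta\bigl(u(x)\bigr)\int_{\mathbb{R}^{d'}}\zeta(z)\,d\nu_x(z)\dx.
\end{equation*}

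First, I would split
\begin{equation*}
\eta(u_j)\,\zeta(v_j) \;=\; \eta(u)\,\zeta(v_j) \;+\; \bigl[\eta(u_j) - \eta(u)\bigr]\,\zeta(v_j),
\end{equation*}
and treat the two summands separately. For the first summand I use that $\eta(u(\cdot))\in L^\infty(Q)$ (since $u$ is measurable and $\eta$ is bounded continuous), so the product $x \mapsto \eta(u(x))\zeta(\cdot)\chi_E(x)$ is an admissible Carath\'eodory integrand for the generating sequence $\{v_j\}$; hence the hypothesis on $v_j$ gives
\begin{equation*}
\int_E \eta(u(x))\,\zeta(v_j(x))\dx \;\longrightarrow\; \int_E \eta(u(x))\int_{\mathbb{R}^{d'}}\zeta(z)\,d\nu_x(z)\dx.
\end{equation*}

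The second summand is where the real work sits. Since $\zeta$ is bounded by $\|\zeta\|_\infty$, it suffices to show $\eta(u_j) - \eta(u) \to 0$ in measure on $Q$ (indeed, on any finite-measure set, convergence in measure together with a uniform $L^\infty$ bound gives convergence to zero in $L^1$, and then multiplication by the bounded sequence $\zeta(v_j)$ yields vanishing of the integral). Since $u_j \to u$ a.e.\ and $\eta$ is continuous, the continuous mapping theorem (applied pointwise a.e.) gives $\eta(u_j) \to \eta(u)$ a.e.; by Egorov's theorem on the finite-measure set $Q$, for any $\delta > 0$ we get a subset $Q_\delta \subset Q$ with $|Q \setminus Q_\delta| < \delta$ on which the convergence is uniform, which promotes the a.e.\ convergence to convergence in measure. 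Combining the two bounds and letting $\delta \to 0$ finishes the estimate.

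The potentially delicate point is the density/approximation step: one must check that test functions of product form $\eta(y)\zeta(z)$ (or rather their linear combinations with $L^\infty$ coefficients in $x$) are rich enough to characterize the product Young measure $\delta_{u(x)}\otimes \nu_x$. This follows from the Stone--Weierstrass theorem on one-point compactifications (product functions are dense in $C_0(\mathbb{R}^d \times \mathbb{R}^{d'})$), combined with a standard $\varepsilon$-of-room argument using tightness of $\nu$ (which is automatic here since $v_j$ generates $\nu$ and $Q$ has finite measure, so $\nu_x$ is a probability measure for a.e.\ $x$). Once this reduction is in place, the product-test-function computation above is exactly the identification $x \mapsto \delta_{u(x)} \otimes \nu_x$, completing the proof.
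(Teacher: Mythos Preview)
Your argument is correct and is the standard proof of this fact. Note, however, that the paper does not actually prove this lemma: it is stated in Appendix~\ref{C} with the remark that the Young measure lemmas ``come from \cite[Corollaries 3.2--3.4]{muller}'', so there is no in-paper proof to compare against. Your write-up is essentially the argument one finds in M\"uller's notes: reduce via Stone--Weierstrass to tensor-product test functions $\varphi(x)\eta(y)\zeta(z)$, split $\eta(u_j)\zeta(v_j)=\eta(u)\zeta(v_j)+[\eta(u_j)-\eta(u)]\zeta(v_j)$, handle the first summand by the generating property of $\{v_j\}$ with the $L^\infty$ weight $\varphi(x)\eta(u(x))$, and kill the second summand by dominated convergence (your Egorov detour is fine but unnecessary, since $|\eta(u_j)-\eta(u)|\le 2\|\eta\|_\infty$ and $\eta(u_j)\to\eta(u)$ a.e.\ already give $L^1$-convergence on the finite-measure set $Q$). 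The tightness remark at the end is also to the point: the hypothesis that $v_j$ generates $\nu$ in M\"uller's sense means $\nu_x$ is a probability measure for a.e.\ $x$, so $\delta_{u(x)}\otimes\nu_x$ is as well, and no mass is lost for the pair.
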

\begin{lemat}
Suppose that a sequence $z_j$ of measurable functions from $Q$ to $\mathbb{R}^d$ generates the Young measure $\nu : Q \to \mathcal{M}(\mathbb{R}^d)$. Then $z_j\to z$ in measure if and only if $\nu_x = \delta_{z(x)}$ a.e..
\label{lm:delta_measure}
\end{lemat}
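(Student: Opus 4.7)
The plan is to prove both implications by applying the defining property of the generated Young measure (Lemma \ref{lm:ineq}) to suitable bounded Carath\'eodory functions, and then exploiting uniqueness of weak $L^1$ limits together with the Riesz representation of $\nu_x$.

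For the direction $\nu_x = \delta_{z(x)}$ a.e. $\Rightarrow z_j \to z$ in measure, I would introduce the bounded Carath\'eodory function $F(x,\lambda) := \min(|\lambda - z(x)|, 1)$, which is measurable in $x$ because $z$ is, and continuous (even Lipschitz) in $\lambda$. Since $|F| \le 1$ and $Q$ has finite measure, the sequence $\{F(\cdot,z_j(\cdot))\}$ is trivially uniformly integrable, so Lemma \ref{lm:ineq} yields
\begin{equation}
F(\cdot,z_j(\cdot))\ \rightharpoonup\ \int_{\mathbb{R}^d} F(\cdot,\lambda)\,\dnu_\cdot(\lambda) \;=\; F(\cdot,z(\cdot)) \;=\; 0 \qquad \text{in } L^1(Q).
\nonumber
\end{equation}
Because $F(\cdot,z_j(\cdot)) \ge 0$, testing this weak convergence against the constant $1 \in L^\infty(Q)$ gives $\int_Q \min(|z_j - z|,1)\,\dxdt \to 0$, which is exactly convergence in measure of $z_j$ to $z$ on the finite-measure set $Q$.

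For the converse, assume $z_j \to z$ in measure. Pick any $\varphi \in C_b(\mathbb{R}^d)$. Continuity of $\varphi$ and convergence of $z_j$ in measure imply $\varphi(z_j) \to \varphi(z)$ in measure, and the uniform bound $|\varphi(z_j)| \le \|\varphi\|_\infty$ together with dominated convergence upgrades this to strong $L^1(Q)$ convergence. On the other hand, since $\{z_j\}$ generates $\nu$ and the sequence $\{\varphi(z_j)\}$ is uniformly integrable, Lemma \ref{lm:ineq} gives
\begin{equation}
\varphi(z_j(\cdot))\ \rightharpoonup\ \int_{\mathbb{R}^d}\varphi(\lambda)\,\dnu_\cdot(\lambda) \qquad \text{in } L^1(Q).
\nonumber
\end{equation}
Uniqueness of weak $L^1$ limits forces $\int_{\mathbb{R}^d}\varphi(\lambda)\,\dnu_x(\lambda) = \varphi(z(x))$ for a.e.\ $x\in Q$, with the exceptional null set depending on $\varphi$.

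The main care-point (and essentially the only technical wrinkle) is passing from a $\varphi$-dependent null set to a single null set off which $\nu_x = \delta_{z(x)}$. I would select a countable dense subset $\{\varphi_n\}_{n\in\mathbb{N}}$ of $C_0(\mathbb{R}^d)$ (with respect to the sup-norm) and take the countable union of the corresponding null sets; this yields a single null set $N \subset Q$ off which
\begin{equation}
\int_{\mathbb{R}^d}\varphi_n(\lambda)\,\dnu_x(\lambda) = \varphi_n(z(x)) \qquad \forall\,n\in\mathbb{N}.
\nonumber
\end{equation}
Density of $\{\varphi_n\}$ in $C_0(\mathbb{R}^d)$ extends this identity to every $\varphi \in C_0(\mathbb{R}^d)$, and the Riesz representation theorem then identifies $\nu_x$ with the Dirac measure $\delta_{z(x)}$ for every $x \in Q\setminus N$. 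This completes the proof; the only step requiring any thought beyond direct application of Lemma \ref{lm:ineq} is the separability argument producing a common null set, and this is entirely standard.
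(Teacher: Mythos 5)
Your proof is correct. Note that the paper does not prove this lemma at all --- it is quoted from \cite[Corollaries 3.2--3.4]{muller} --- and your argument (testing the fundamental theorem, i.e.\ Lemma \ref{lm:ineq}, with $F(x,\lambda)=\min(|\lambda-z(x)|,1)$ for one direction, and with bounded continuous $\varphi(\lambda)$ plus a separability/Riesz argument to obtain a common null set for the other) is essentially the standard proof given in that reference, so there is nothing to add beyond the routine remarks that $z$ is measurable and that $\nu_x$ is a finite Radon measure, both of which your argument implicitly and legitimately uses.
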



{\bf Acknowledgement} F.K. is a PhD student of the International PhD Projects Programme of Foundation for Polish Science operated within the Innovative Economy Operational Programme 2007-2013 funded by European Regional Development Fund (PhD Programme: Mathematical Methods in Natural Sciences). The project was financed by the National Science Centre, the number of decisions DEC-2012/05/E/ST1/02218


\bibliographystyle{plain}

\end{document}